\documentclass[1p,12pt]{elsarticle}

\usepackage{graphicx}
\usepackage{amsmath,amsxtra,amssymb,latexsym, amscd,amsthm}

\theoremstyle{plain}
\newtheorem{thm}{Theorem}[section]

\newtheorem{lem}[thm]{Lemma}

\theoremstyle{definition}

\newtheorem{rmk}[thm]{Remark}

\textheight 21truecm \textwidth 15truecm
\numberwithin{equation}{section}
\numberwithin{figure}{section}
\numberwithin{table}{section}
\newtheorem{conj}{Conjecture}
\usepackage{amsthm,amsmath,amssymb}

\usepackage[colorlinks=true,citecolor=blue,linkcolor=black,urlcolor=blue]{hyperref}


\usepackage{graphicx}

\newcommand{\TAD}{\operatorname{TAD}}

\newcommand{\CON}{\operatorname{CON}}
\newcommand{\CM}{\operatorname{CM}}
\newcommand{\AD}{\operatorname{AD}}
\newcommand{\AR}{\operatorname{AR}}
\newcommand{\W}{\operatorname{W}}
\newcommand{\F}{\operatorname{F}}
\newcommand{\Pol}{\operatorname{P}}
\newcommand{\SM}{\operatorname{SM}}

\begin{document}

\begin{frontmatter}
\title{\textbf{Proof of a conjecture of Kenyon and Wilson on semicontiguous minors}}


\author{Tri Lai\corref{cor1}\fnref{myfootnote1}}
\address{Department of Mathematics\\ University of Nebraska -- Lincoln\\ Lincoln, NE 68588}
\fntext[myfootnote1]{This research was supported in part by the Institute for Mathematics and its Applications with funds provided by the NSF (grant no. DMS-0931945).}
\cortext[cor1]{Corresponding author, email: tlai3@unl.edu, tel: 402-472-7001}

\begin{abstract}
Kenyon and Wilson  showed how to test if a circular planar electrical network  with $n$ nodes is well-connected by checking the positivity of $\binom{n}{2}$ central minors of the response matrix. Their test is based on the fact that any contiguous minor of a matrix can be expressed as a Laurent polynomial in the central minors.  Moreover, the Laurent polynomial is the generating function of domino tilings of a weighted Aztec diamond. They  conjectured that a larger family of minors, semicontiguous minors, can also be written in terms of domino tilings of a region on the square lattice. In this paper, we present a proof of the conjecture.
\end{abstract}

\begin{keyword}
Perfect matchings\sep domino tilings \sep dual graph \sep  graphical condensation \sep electrical networks \sep response matrix \sep Aztec diamonds
\MSC[2010] 05A15, 05B45, 05C50
\end{keyword}

\end{frontmatter}

\section{Introduction}
The study of the electrical networks comes from classical physics with the work of Ohm and Kirchhoff  more than 100 years ago.
The \emph{circular planar electrical networks} were first studied systematically by Colin de Verdi\`{e}re \cite{Colin1} and Curtis, Ingerman, Moores, and Morrow \cite{Curtis1,Curtis2}.  Recently, a number of new properties of the circular planar electrical networks have been discovered (see e.g. \cite{Alman,KW1,KW,Lam,LP,Yi14}).

A \emph{circular planar electrical network} (or simply \emph{network} in this paper) is a finite graph $G=(V,E)$ embedded on a disk with a set of distinguished vertices $N\subseteq V$ on the circle, called \emph{nodes}, and a \emph{conductance function} $wt: E \rightarrow \mathbb{R}^+$ (see Figure \ref{circularnet} for an example).

\begin{figure}\centering
\includegraphics[width=6cm]{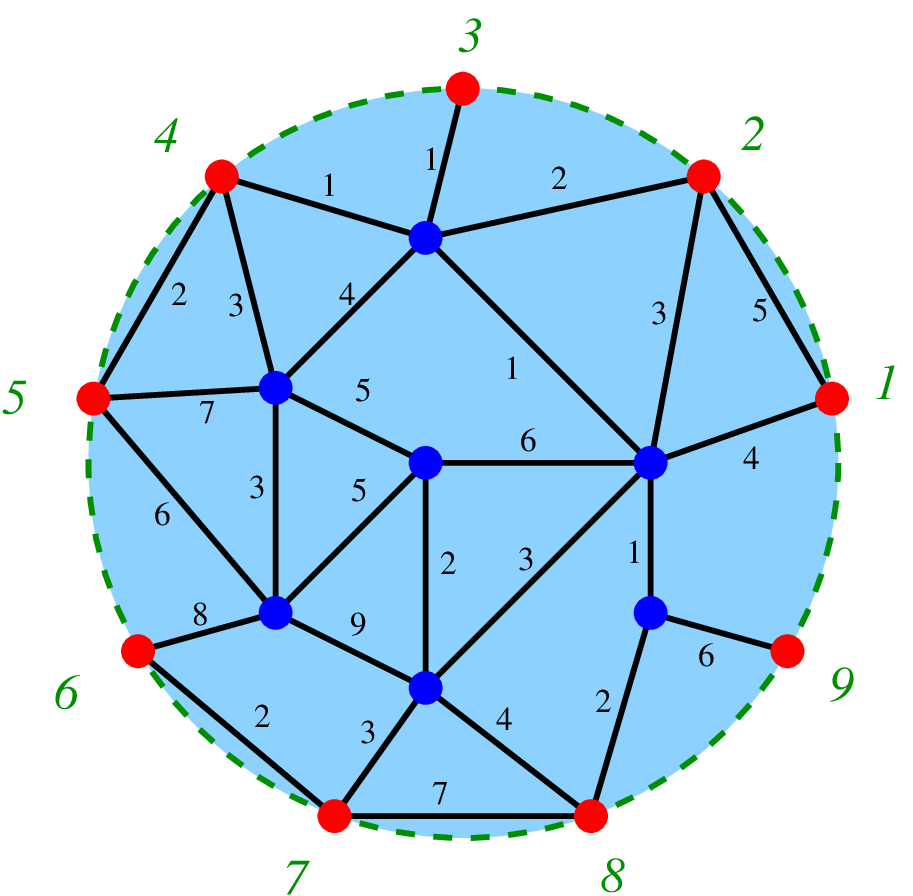}
\caption{A circular planar electrical network with 9 nodes.}
\label{circularnet}
\end{figure}

Arrange the indices $1,2,\dotsc, n$ of an $n\times n$ matrix $M=(m_{i,j})_{1\leq i,j\leq n}$ in counter-clockwise order around the circle.  Assume that  $A=\{a_1,a_2,\dotsc,a_k\}$ and $B=\{b_1,b_2,\dotsc,b_{\ell}\}$ are two sets of indices so that $a_1,a_2,\dotsc,a_k$ and $b_{\ell},b_{\ell-1},\dotsc,b_1$ are in counter-clockwise order around the circle. We denote by $M_{A}^{B}$ the submatrix $\big(m_{a_i,b_j}\big)_{\substack{\text{$1\leq i\leq k$}\\\text{$1\leq j \leq  \ell$}}}$ of $M$. In the case $k=\ell$, we call the pair $(A,B)$ a \emph{circular pair} of $M$ and the determinant $\det M_{A}^{B}$ a \emph{circular minor}\footnote{In this paper, we refer \emph{minors} as determinants of submatrices.} of $M$. If $A$ and $B$ are non-interlaced around the circle, we call the latter minor a \emph{non-interlaced circular minor}.

Associated with a network with $n$ nodes is a \emph{response matrix}  $\Lambda=(\lambda_{i,j})_{1\leq i,j\leq n}$ that measures the response of the network to potential applied at the nodes. In particular, $-\lambda_{i,j}$ is the current  that would flow into node $j$ if node $i$ is set to one volt and the remaining nodes are set to zero volts.
It has been shown that a matrix $M$ is the response matrix of a network if and only if it is symmetric with row and column sums equal to zero, and each non-interlaced circular minor $\det M_{A}^{B}$ is non-negative (see Theorem 4 in \cite{Curtis1}).

A network is called \emph{well-connected} if for any two non-interlaced sets of $k$ nodes $A$ and $B$, there are  $k$ pairwise vertex-disjoint paths in $G$ connecting the nodes in $A$ to the nodes in $B$.  A number of equivalent definitions of the well-connected networks were given in \cite{Colin1}. It has been shown by Colin de Verdi\`{e}re that a network is  well-connected if and only if all non-interlaced circular minors of the response matrix are positive.

A \emph{contiguous minor} of  a matrix $M$ is a circular minor of the form
\begin{equation}
\CON_{a,b,y}(M):=\det M_{A}^{B},
\end{equation}
where $A=\{a,a+1,\dots,a+y-1\}$ and $B=\{b+y-1,\dotsc,b+1,b\}$, and where the indices are interpreted modulo $n$ (i.e. the row indices and the column indices are contiguous on the circle).
 The \emph{central (contiguous) minor} $\CM_{x,y}(M)$ of $M$ is defined to be the contiguous minor $\CON_{a,b,y}(M)$, where the parameter $x$ satisfies the following conditions:
\begin{equation}\label{centralcondition}a=\left\lfloor\frac{x-y}{2}\right\rfloor \text{ and  } b=\left\lfloor\frac{x-y+n-(n-1 \mod 2)}{2}\right\rfloor\end{equation}
in modulo $n$.

 The central minor was first defined (implicitly)  in \cite{Curtis1}.
One readily sees that the parameter $x$ is naturally interpreted in modulo $2n$ (since increasing $x$ by 2 is equivalent to cyclically shifting the indices 1 unit counter-clockwise). The parameter $y$ ranges from $0$ to $n$.

If $1\leq x\leq n$, $1\leq y<n/2$ or $y=n/2$ and $x+y$ is odd, then we call $\CM_{x,y}(M)$ a \emph{small central minor}. It is easy to see that any small central minor is non-interlaced.

In this paper, we use the vertices of a regular $n$-gon arranged in the counter-clockwise order on a circle to represent the indices $1,2,\dots,n$ of a given $n\times n$ matrix $M$. A circular minor  $\det M_{A}^{B}$ of $M$ (where $A=\{a_1,\dots,a_k\}$ and $B=\{b_k,\dots,b_1\}$) is represented by  a circular diagram with $k$  chords connecting the vertex $a_i$ to the vertex $b_i$, for $i=1,2,\dots,k$. In this representation,  the central minors have their chords as centrally located as possible (plus or minus a rounding error).  Figure \ref{18gon} illustrates three circular minors of a $18\times 18$ matrix $M$: (a) a non-contiguous minor, (b) a contiguous minor, which is not a central minor, and (c) a central minor.
\begin{figure}\centering
\includegraphics[width=10cm]{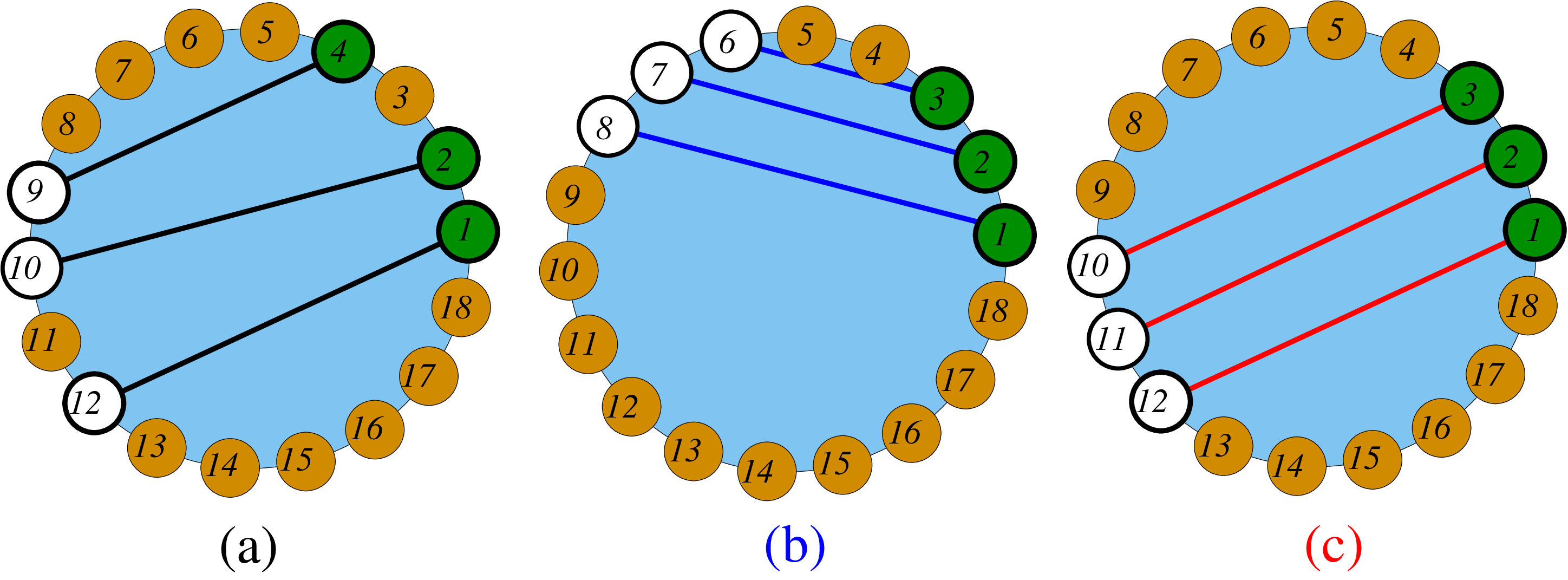}
\caption{Three circular minors of a $18\times 18$ matrix $M$: (a) $M_{1,2,4}^{12,10,9}$, (b) $\CON_{1,6,3}(M)$, and (c) $\CM_{6,3}(M)$.}
\label{18gon}
\end{figure}

There are total $\binom{n}{2}$ small central minors, whether $n$ is even or odd.  Kenyon and Wilson \cite{KW} showed how to test the well-connectivity of a network  by checking the positivity of the $\binom{n}{2}$ small central minors of the response matrix. This is a significant improvement of Colin de Verdi\`{e}re's previous test using exponentially many non-interlaced circular minors. Intuitively, one can say that the positivity of the central minors implies the positivity of all non-interlaced circular minors.

The \emph{Aztec diamond} $\AD_{x_0,y_0}^{h}$ of order $h$ with the center located at the lattice point $(x_0,y_0)$ in the grid $\mathbb{Z}^{2}$ is the region consisting of all unit squares inside the contour $|x-x_0|+|y-y_0|= h+1$.  It has been proven  that there are $2^{h(h+1)/2}$ different ways to cover an Aztec diamond of order $h$ by dominoes so that there are no gaps or overlaps; and such coverings are called \emph{domino tilings} of the Aztec diamond (see \cite{Elkies1,Elkies2}, and see \cite{Tri0,Tri2} for a recent generalization). The \emph{truncated Aztec diamond} $\TAD_{x_0,y_0}^{h,n}$ is defined to be the portion of the Aztec diamond $\AD_{x_0,y_0}^{h}$ between the two lines $y=0$  and $y=n$ (see Figure \ref{TAD} for several examples). We notice that $\TAD_{x_0,y_0}^{h,n}\equiv \AD_{x_0,y_0}^{h}$ if $h\leq y_0\leq n-h$.

\begin{figure}\centering
\includegraphics[width=13cm]{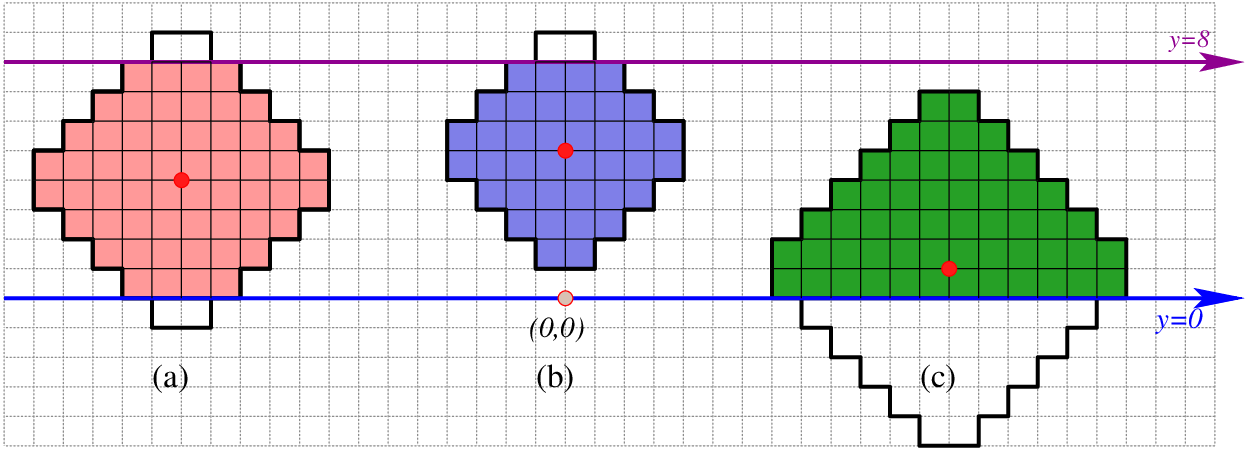}
\caption{The truncated Aztec diamonds:  (a) $\TAD_{-13,4}^{5,8}$, (b) $\TAD_{0,5}^{4,8}$, (c) $\TAD_{13,1}^{6,8}$.}
\label{TAD}
\end{figure}

Besides the Aztec diamonds, we are also interested in their natural generalizations, the \emph{Aztec rectangles}. The Aztec rectangle of size $3\times 6$ is illustrated in Figure \ref{ARminor}(a); the Aztec rectangle of size $4\times 6$ is shown in Figure \ref{ARminor}(b). The lattice point $(x_0,y_0)$ is called the \emph{center}  of the Aztec rectangle if the line $x=x_0$ passes through the middle point of the top length-2 step of the boundary, and the line $y=y_0$ passes through the middle point of the length-2 vertical step on the left of the boundary (see the dots in Figure \ref{ARminor}). Denote by $\AR_{x_0,y_0}^{m,n}$ the Aztec rectangle of size $m\times n$ with the center at $(x_0,y_0)$.

\begin{figure}\centering
\includegraphics[width=12cm]{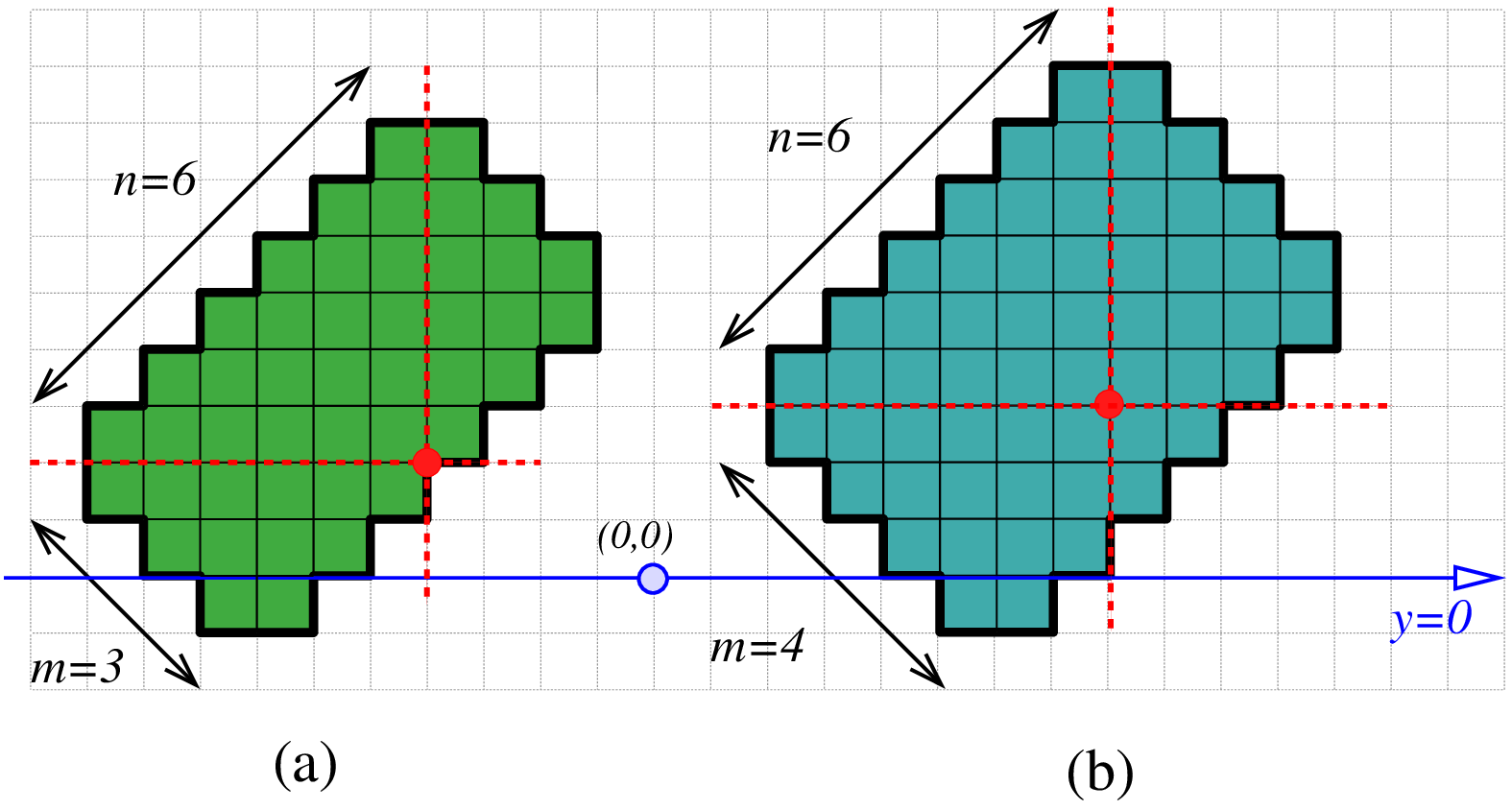}
\caption{The Aztec rectangles $\AR_{-4,2}^{3,6}$ (left) and $\AR^{4,6}_{8,3}$ (right).}
\label{ARminor}
\end{figure}

Given an $n\times n$ matrix $M$. For each lattice point $(x,y)$, we define  $v_{x,y}:=\CM_{x,y}$ if $0\leq y\leq n$, and $v_{x,y}:=1$ otherwise. We assign to each domino a weight $\frac{1}{v_{x_1,y_1}v_{x_2,y_2}}$, where $(x_1,y_1)$ and $(x_2,y_2)$ are the middle points of the long sides of the domino. In particular, the  horizontal domino consisting of the unit squares centered at $(x+\frac{1}{2},y+\frac{1}{2})$ and $(x-\frac{1}{2},y+\frac{1}{2})$ is weighted by $\frac{1}{v_{x,y}v_{x,y+1}}$; analogously, the vertical domino covering the unit squares centered at $(x+\frac{1}{2},y+\frac{1}{2})$ and $(x+\frac{1}{2},y-\frac{1}{2})$ has weight $\frac{1}{v_{x,y}v_{x+1,y}}$.
The \emph{weight} of a domino tiling of a \emph{region}\footnote{A \emph{region} considered in this paper is a finite connected union of unit squares of the square lattice}
 is the product of weights of all dominoes in the tiling. The weight $\W(R)$ of a  region $R$ is the sum of weights of all the domino tilings of $R$ (if $R$ does not have any domino tiling, then $\W(R):=0$; $\W(\emptyset):=1$ by convention). Our weight assignment here can be viewed as the `dual' of Speyer's weight assignment in \cite{Speyer}.

The \emph{covering monomial} $\F(R)$ of a non-empty region $R$ is defined to be the product $\prod_{x,y}v_{x,y}$ taken over all the lattice points $(x,y)$ inside $R$ or on the boundary of $R$, except for $90^{\circ}$-corners (these lattice points (x,y) are illustrated by the dots in Figure \ref{coveringmono}). Intuitively, the covering monomial of a region $R$ is the weight product of lattice points which the weight of dominoes in $R$ depends on.  The zero-order Aztec diamond $\AD_{x_0,y_0}^{0}$ is a formal empty region, which has the weight $\W(\AD_{x_0,y_0}^{0}):=1$ and the covering monomial $\F(\AD_{x_0,y_0}^{0}):=v_{x_0,y_0}$.

\begin{figure}\centering
\includegraphics[width=6cm]{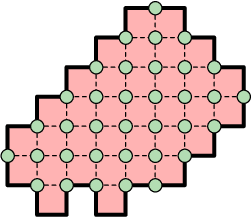}
\caption{Illustration of the definition of covering monomial.}
\label{coveringmono}
\end{figure}

To a region $R$, we associate a Laurent polynomial $\Pol(R):=\F(R)\W(R)$ in the variables $v_{x,y}$'s. We call $\Pol(R)$ the \emph{tiling polynomial} of the region $R$. Kenyon and Wilson \cite{KW} proved that any contiguous minor can be written as the tiling polynomial of a truncated Aztec diamond. The following theorem is a direct consequence of Kenyon and Wilson's Theorem 5 in \cite{KW}.

\begin{thm}\label{KWthm}
Let $\CON_{a,b,y}(M)$ be a contiguous minor of a matrix $M$. Assume that $h$ is the integer closest to $0$ so that $\CON_{a,b+h,y}(M)$ is the central minor $\CM_{x,y}(M)$. Then $\CON_{a,b,y}(M)=\Pol\left(\TAD_{x-h,y}^{|h|,n}\right)$, i.e. is the tiling polynomial of the truncated Aztec diamond $\TAD_{x-h,y}^{|h|,n}$.
\end{thm}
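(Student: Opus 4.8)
The plan is to induct on the order $|h|$ of the Aztec diamond, producing on the algebraic side a Desnanot--Jacobi (Dodgson condensation) identity among contiguous minors, and on the combinatorial side the parallel bilinear identity for the attached tiling polynomials, and then checking that the two identities coincide term by term. Before anything else I would pin down the dictionary between the two worlds. Fixing the row window $\{a,\dots,a+y-1\}$, let $\CM_{x,y}$ be the central minor sharing this window and let $b_c$ be its column-start, so that the offset is $h=b_c-b$ and the region attached to $\CON_{a,b,y}$ by the theorem is $\TAD_{x-h,y}^{|h|,n}$, of order $|h|$ and centered at height $y$. A short computation gives $b_c=a+c_0$ with $c_0=\lfloor(n-1)/2\rfloor$ (up to a parity correction from the two admissible parities of $x$ and the floors in the definition of $\CM_{x,y}$), so the center of $\TAD_{x-h,y}^{|h|,n}$ sits at the lattice point $(a+b+y-c_0,\,y)$. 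I would record the centers and orders of the six minors that enter condensation in these coordinates.

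On the minor side, applying Desnanot--Jacobi to the $y\times y$ matrix of $\CON_{a,b,y}$, deleting the rows and columns at the corners of the window, gives
\begin{equation*}
\CON_{a,b,y}\,\CON_{a+1,b+1,y-2}=\CON_{a+1,b,y-1}\,\CON_{a,b+1,y-1}-\CON_{a+1,b+1,y-1}\,\CON_{a,b,y-1}.
\end{equation*}
Passing the six minors through the dictionary, their centers are the six vertices of an octahedron with common center $(a+b+y-c_0,\,y-1)$ in the three coordinates (horizontal position, height, order): the minor $\CON_{a+1,b,y-1}$ has offset $h+1$; the four minors $\CON_{a,b,y}$, $\CON_{a+1,b+1,y-2}$, $\CON_{a+1,b+1,y-1}$, $\CON_{a,b,y-1}$ all have offset $h$; and $\CON_{a,b+1,y-1}$ has offset $h-1$. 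Rearranged, the identity reads
\begin{equation*}
\CON_{a+1,b,y-1}\,\CON_{a,b+1,y-1}=\CON_{a,b,y}\,\CON_{a+1,b+1,y-2}+\CON_{a+1,b+1,y-1}\,\CON_{a,b,y-1},
\end{equation*}
with a plus sign, which is the sign that graphical condensation produces on the tiling side.

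The heart of the argument is then a matching identity for the tiling polynomials of the six truncated Aztec diamonds of this octahedron, of the same shape $\Pol(R_{+})\Pol(R_{-})=\Pol(R_1)\Pol(R_2)+\Pol(R_3)\Pol(R_4)$, where $R_{+}$ has order $|h|+1$, the $R_i$ have order $|h|$, and $R_{-}$ has order $|h|-1$. I would establish it by Kuo's graphical condensation applied to the dual graph of a suitable host truncated Aztec diamond: removing four well-chosen boundary vertices yields five defected regions which, after stripping the dominoes forced near the defects, reduce to the five smaller regions $R_{-},R_1,\dots,R_4$, the forced-domino weights and the covering monomials $\F$ together reconciling the different orders with the clean bilinear relation. (Conceptually this is the statement, dual to Speyer's weighting in \cite{Speyer}, that $\Pol(\TAD)$ solves the octahedron recurrence, with $\F$ furnishing the normalization.) Granting this identity, the induction runs as follows: the base cases are offset $0$ (central minors, where $\W(\AD^{0})=1$ and $\F(\AD^{0})=\CM_{x,y}$, so $\Pol=\CM_{x,y}=\CON$) together with offsets $\pm1$ (a direct two-tiling computation); for a target offset $m\ge 2$ one solves the rearranged identity for the order-$m$ minor $\CON_{a+1,b,y-1}$ as a Laurent monomial in minors of offsets $m-1$ and $m-2$, which by the inductive hypothesis equal the tiling polynomials of the corresponding smaller regions, and the tiling-polynomial octahedron identity identifies the quotient with $\Pol$ of the order-$m$ region. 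The case $h<0$ reduces to $h>0$ by transposing $M$, which interchanges the row and column windows, negates the offset, and reflects each region while preserving $\Pol$.

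The step I expect to be the main obstacle is the combinatorial identity for $\Pol$ together with its covering-monomial bookkeeping, rather than the Desnanot--Jacobi input (which is classical) or the abstract induction. Concretely, one must choose the four condensation vertices, track the forced dominoes, and verify that the five reduced regions are exactly the truncated Aztec diamonds prescribed by the dictionary; and one must compute, for each of the six regions, the covering monomial $\F$ as a product over the lattice points in or on the region excluding the $90^{\circ}$ corners, and check that these combine with the forced-domino weights to give the relation with no leftover factor. This is delicate precisely where the geometry is least uniform: along the truncation lines $y=0$ and $y=n$ (handled by the convention $v_{x,y}=1$ off the strip $0\le y\le n$), at the tips where cells are deleted and the set of excluded corners shifts, and across the parity cases ($n$ even versus odd, and the two admissible parities of $x$) that already perturb the relation $b_c=a+c_0$. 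Carrying this monomial computation uniformly over all these cases is where I expect the real work to concentrate.
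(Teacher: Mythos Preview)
This theorem is not proved in the present paper: it is quoted from Kenyon and Wilson \cite{KW} and then used as the base case $s=1$ in the inductive proof of Theorem~\ref{genthm}. So there is no proof here to compare your proposal against.

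That said, your strategy is exactly the one Kenyon and Wilson employ in \cite{KW}, and it is the same template this paper follows (with the jaw move in place of plain Dodgson, and more elaborate Kuo condensations) to push the result from contiguous to semicontiguous minors. Matching Desnanot--Jacobi on the minor side against Kuo condensation on the tiling side, with the covering monomial $\F$ absorbing the forced-domino weights, is precisely the mechanism. Your remark that $\Pol(\TAD)$ satisfies the octahedron recurrence dual to Speyer's weighting is the right conceptual framing.

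One point to tighten: your offset bookkeeping is slightly optimistic. The offset $h$ of $\CON_{a,b,y}$ depends on $a$, $b$, and $y$ through the floors in the definition of $\CM_{x,y}$, and when $y$ drops to $y-1$ or $y-2$ the central column-start $b_c$ can shift by $0$ or $1$ according to parity. So the assertion that four of the six Dodgson minors have offset exactly $h$ holds only up to such a shift, and the induction really runs over the pair $(|h|,y)$ (or, equivalently, one fixes a parity class of $x$ and works within it). This is harmless once stated, and it is the parity wrinkle you already flagged in your final paragraph, but it should be made explicit rather than left implicit in the phrase ``up to a parity correction.''
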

\begin{proof}
By Kenyon and Wilson's theorem (Theorem 5 in \cite{KW}), the Laurent polynomial $\Pol\left(\TAD_{x-h,y}^{|h|,n}\right)$ equals the contiguous minor $\det M_{A}^{B}$ whose row index set
$A$ is given by the row index set of the rightmost (or leftmost, respectively) central minor, and whose column
index set $B$ is given by the column index set of the leftmost (or rightmost, respectively) central minor. We only need to show that  $\CON_{a,b,y}(M)$ is exactly this contiguous minor.

Indeed, if $h$ is non-negative then the rightmost and the leftmost vertices, which are not a $90^{\circ}$ corner, in the truncated Aztec diamond $\TAD_{x-h,y}^{|h|,n}$ correspond to the central minors $\CM_{x,y}(M)$ and $\CM_{x-2h,y}(M)$, respectively. If $h$ is negative, these rightmost and the leftmost vertices correspond respectively to the central minors $\CM_{x-2h,y}(M)$ and $\CM_{x,y}(M)$. By the definition of $x,y,h$, it is easy to see that the row index set of $\CM_{x,y}(M)$ is the set $A=\{a,a+1,\dots,a+y-1\}$ and the column index set of $\CM_{x-2h,y}(M)$ is the set $B=\{b+y-1,\dots,b+1,b\}$. This means that $\CON_{a,b,y}(M)$ is the contiguous minor corresponding to the Laurent polynomial $\Pol\left(\TAD_{x-h,y}^{|h|,n}\right)$.
\end{proof}

For example, let $M$ be a $13\times 13$ matrix, then the contiguous minor $\CM_{1,5,2}(M)$ is expressed as the tiling polynomial  $\Pol(\TAD_{2,2}^{2,13})$ (shown in Figure \ref{diskexample}).

\begin{figure}\centering
\includegraphics[width=10cm]{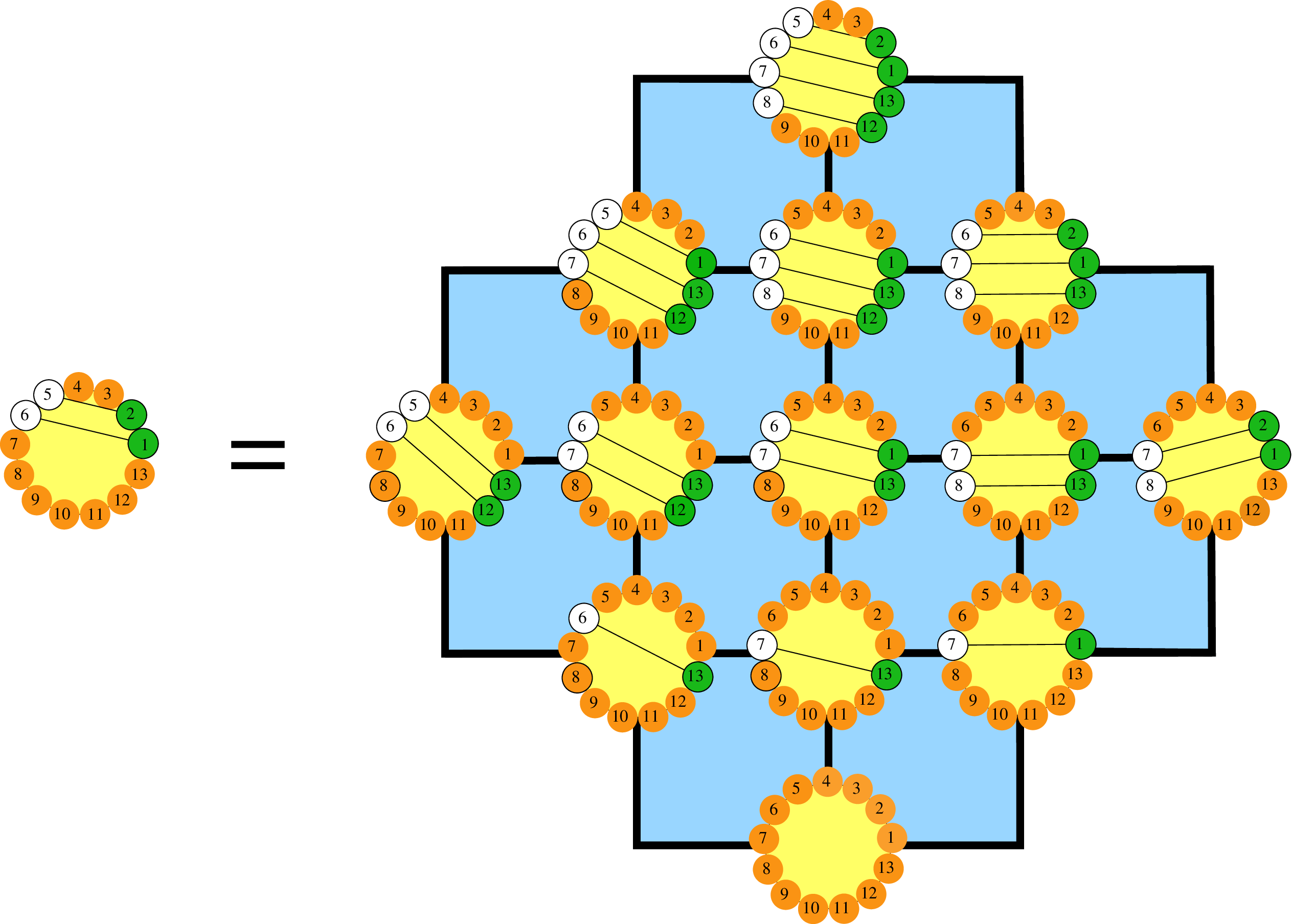}
\caption{The correspondence between contiguous minors and truncated Aztec diamonds.}
\label{diskexample}
\end{figure}

A \emph{semicontiguous minor} is a minor of the form $\det M_{A}^{B}$, where \emph{at least one} of $A$ and $B$ is contiguous\footnote{Strcitly speaking, this definition of the semicontiguous minors is slightly different from that in \cite{KW}. Kenyon and Wilson distinguished the contiguous minors from the semicontiguous minors by defining a  \emph{semicontiguous minor} to be the minor $\det M_{A}^{B}$, where \emph{exactly one} of $A$ and $B$ is contiguous. However, it is more convenient for us to view the contiguous minors as a special class of the semicontiguous minors as in our definition.}.
Kenyon and Wilson conjectured that
\begin{conj}[Conjecture 3 in \cite{KW}]\label{KWconj}
Any semicontiguous minor can be written as the tiling polynomial $\Pol(R)$ of some region $R$ on the square lattice.
\end{conj}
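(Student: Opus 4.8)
The plan is to realize both sides of the conjectured identity as solutions of one and the same recurrence and then to pin them down by a common base case. Fix a semicontiguous minor $\det M_A^B$ and, without loss of generality, let $A=\{a,a+1,\dots,a+k-1\}$ be the contiguous set while $B=\{b_1,\dots,b_k\}$ (with $b_k,\dots,b_1$ in counter-clockwise order) is the free set. Holding the row set $A$ fixed and viewing $\det M_A^B$ as a Pl\"ucker coordinate in the columns, one has the three-term Grassmann--Pl\"ucker relation
\[
\det M_A^{B_{pr}}\det M_A^{B_{qs}}=\det M_A^{B_{pq}}\det M_A^{B_{rs}}+\det M_A^{B_{ps}}\det M_A^{B_{qr}},
\]
in which the sets $B_{\bullet}$ share $k-2$ common columns and differ only by which two of four distinguished columns $p<q<r<s$ they contain; crucially every minor occurring is again semicontiguous, since $A$ stays contiguous. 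I would use such relations to decrease a suitable measure of how far $B$ is from being contiguous, inducting down to the base case in which $B$ is contiguous, where Theorem~\ref{KWthm} already identifies $\det M_A^B$ with the tiling polynomial of a truncated Aztec diamond.

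On the geometric side I would first attach to each circular pair $(A,B)$ with $A$ contiguous an explicit region $R=R(A,B)$ on the square lattice whose boundary is a staircase path recording the positions of the elements of $B$ relative to the contiguous block $A$. The defining requirements are that $R(A,B)$ reduce to the truncated Aztec diamond of Theorem~\ref{KWthm} when $B$ is contiguous, and that each ``gap'' between consecutive elements of $B$ insert a corresponding indentation of the staircase (equivalently, glue on an Aztec rectangle of the type $\AR^{m,n}_{x_0,y_0}$). The key geometric lemma is a graphical condensation in the spirit of Kuo: choosing four boundary unit squares of $R$ appropriately, one obtains subregions $R_1,R_2,R_3,R_4$ and a doubly reduced region $R_0$, and one must show that these are exactly the regions $R(A,B_{\bullet})$ attached to the minors in the Pl\"ucker relation above.

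Granting the region identification, the two recurrences are matched by two bookkeeping steps. Since condensation acts on the matching generating functions $\W$ while the tiling polynomial is $\Pol=\F\cdot\W$, I must verify that the covering monomials are compatible, namely
\[
\F(R)\,\F(R_0)=\F(R_1)\,\F(R_2)=\F(R_3)\,\F(R_4)
\]
as monomials in the $v_{x,y}$; this makes the $\F$-factors cancel uniformly, so Kuo's additive identity for $\W$ upgrades to the very relation
\[
\Pol(R)\,\Pol(R_0)=\Pol(R_1)\,\Pol(R_2)+\Pol(R_3)\,\Pol(R_4)
\]
satisfied by the Pl\"ucker-related minors. That both identities carry a plus sign is no accident: it is the matching-theoretic form of the octahedron recurrence, of which the present weighting is the dual of Speyer's~\cite{Speyer}. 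Combined with the base case, induction then yields $\det M_A^B=\Pol(R(A,B))$.

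I expect the main obstacle to be the region-identification lemma: producing a clean, uniform combinatorial description of $R(A,B)$ for an \emph{arbitrary} free set $B$, and proving that a single suitably placed Kuo condensation reproduces precisely the five subregions dictated by the Grassmann--Pl\"ucker relation. This is delicate because the dictionary between the gaps of $B$ and the notches of the staircase must be exact, because the truncation of the Aztec diamond between $y=0$ and $y=n$ deforms the relevant pieces into genuinely rectangular shapes that must be treated case by case, and because one must simultaneously check that the chosen measure of non-contiguity strictly decreases so that the induction terminates at the contiguous base case.
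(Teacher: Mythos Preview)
Your proposal is correct and follows essentially the same strategy as the paper: a Pl\"ucker-type identity on the minor side (the paper uses Kenyon--Wilson's ``jaw move'' on a $k\times(k+1)$ block rather than the fixed-size Grassmann--Pl\"ucker relation, but the effect is the same), matched by Kuo condensation on the region side, with covering-monomial bookkeeping to pass from $\W$ to $\Pol$, and induction down to the contiguous base case of Theorem~\ref{KWthm}. You have also correctly anticipated where the real work lies---the paper defines the region $R(A,B)$ explicitly via a zigzag path encoding the gaps of $B$, splits into three structural types depending on the sign and size of the shift parameter $h$, and must invoke a different Kuo theorem (and a different choice of the four boundary squares) in each type, with further subcases when the truncation at $y=n$ bites.
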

See Figure \ref{CM1} for an example. We refer the reader to \cite[pp. 25--27]{KW} for more examples.

\begin{figure}\centering
\includegraphics[width=13cm]{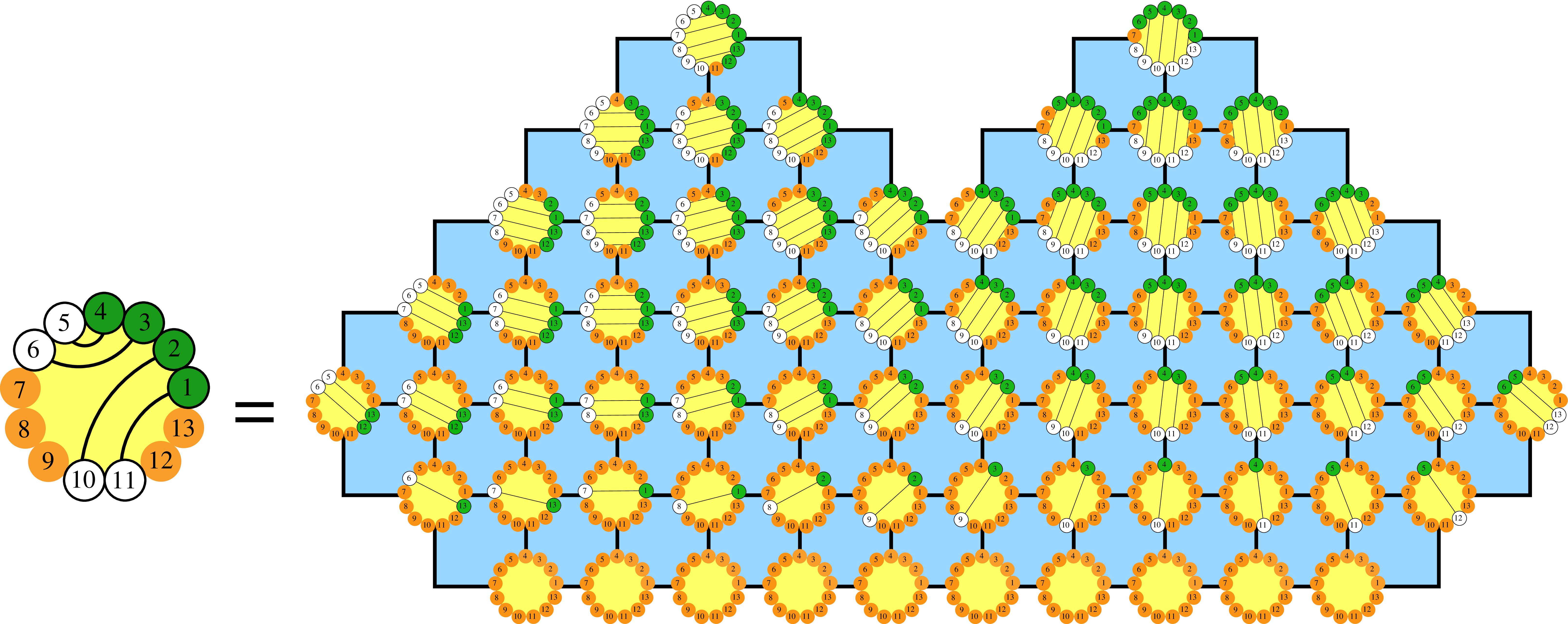}
\caption{Representing a semicontiguous minor as the tiling polynomial of a region on the square lattice.}\label{CM1}
\end{figure}

The goal of this paper is to prove this conjecture. Our proofs use a variation of \emph{Dodgson condensation} (or \emph{Desnanot-Jacobi identity}, see e.g.  \cite{Dodgson} and \cite[pp. 136--149]{Muir}) due to Kenyon and Wilson \cite{KW}  and a powerful method in the enumeration of tilings and perfect matchings, \emph{Kuo condensation} \cite{Kuo}. We refer the reader to e.g. \cite{Ciucu,YYZ,YZ,Ful,Speyer} for various aspects and generalizations of the method. More  recent applications of Kuo condensation can be found in e.g. \cite{CF14,CF15,CL,Tri1,Tri2,Tri3,Tri4,LMNT,KW}.

The rest of this paper is organized as follows. Our main results are presented in Section 2. In Section 3, we show the particular versions of Dodgson and Kuo condensations, which will be employed in our proofs.  The proofs of our main results will be shown in Section 4. Finally, we conclude the paper by posing an open question for the case of general circular minors.

\section{The main results}

In this section, we will describe carefully the structure of the regions corresponding to  the semicontiguous minors.

Consider a circular minor $\det M_{A}^{B}$ of an $n\times n$ matrix $M$, where at least one of $A$ and $B$ is contiguous. We consider first the case when $A$ is contiguous, then $B$ may be not contiguous. Assume that $B$ is partitioned into $s$ contiguous subsets $B_1,B_2,\dots,B_s$ (in counter-clockwise order around the circle). Assume in addition  that $|B_i|=k_i>0$, and that there are $t_i$ indices ($t_i>0$) separating $B_i$ and $B_{i+1}$. We call the sets of indices separating two consecutive subsets $B_i$'s the \emph{gaps} of $B$. Figure \ref{multigap}(a) shows an example of the semicontiguous minor with three gaps in $B$ for the case $n=60$, $s=4$, $k_1=3$, $k_2=4$, $k_3=3$, $k_4=2$, $t_1=t_2=t_3=2$ (the indices of each set $B_i$ are represented by adjacent nodes of the same color).   Denote by $k:=k_1+\dotsc+k_s$ and $t:=t_1+\dotsc+t_{s-1}$ (if $s=1$, then $t=0$ by convention). By definition, we always have $k+t\leq n$. We also assume that the first index in $A$ is $a$ (i.e., $A=\{a,a+1,\dotsc,a+k-1\}$), and the first index in $B$ is $b$. We use the notation\footnote{From now on, if the matrix $M$ is given, we usually drop the parameter $M$ in the notation of the $\SM$-minors.}
\[\SM_{a,b}(k_1,\dotsc,k_s;t_1,\dotsc,t_{s-1})=\SM_{a,b}(k_1,\dotsc,k_s;t_1,\dotsc,t_{s-1})(M)\]
for this minor. We notice that when $s=1$, $\SM_{a,b}(k_1,\dotsc,k_s;t_1,\dotsc,t_{s-1})$ is exactly the contiguous minor $\CON_{a,b,k_1}(M)$.

\begin{figure}\centering
\setlength{\unitlength}{3947sp}%
\begingroup\makeatletter\ifx\SetFigFont\undefined%
\gdef\SetFigFont#1#2#3#4#5{%
  \reset@font\fontsize{#1}{#2pt}%
  \fontfamily{#3}\fontseries{#4}\fontshape{#5}%
  \selectfont}%
\fi\endgroup%
\resizebox{15cm}{!}{\begin{picture}(0,0)%
\includegraphics{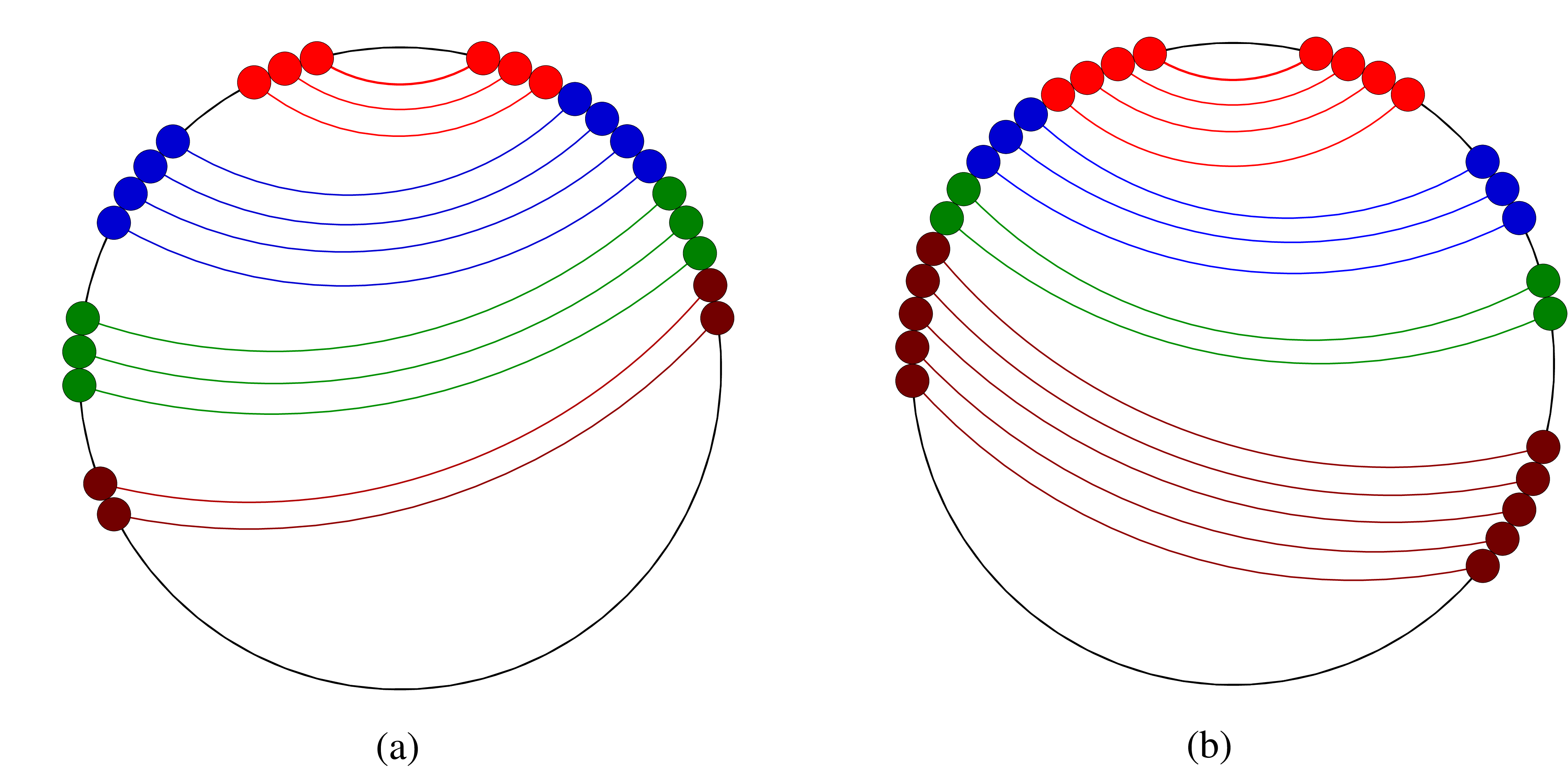}%
\end{picture}%
%
%

\begin{picture}(32793,16150)(646,-15492)
\put(11851,-391){\makebox(0,0)[lb]{\smash{{\SetFigFont{65}{65}{\rmdefault}{\mddefault}{\itdefault}{\color[rgb]{0,0,0}$A_1$}%
}}}}
\put(13861,-1501){\makebox(0,0)[lb]{\smash{{\SetFigFont{65}{49.2}{\rmdefault}{\mddefault}{\itdefault}{\color[rgb]{0,0,0}$A_2$}%
}}}}
\put(15271,-3421){\makebox(0,0)[lb]{\smash{{\SetFigFont{65}{49.2}{\rmdefault}{\mddefault}{\itdefault}{\color[rgb]{0,0,0}$A_3$}%
}}}}
\put(16021,-5686){\makebox(0,0)[lb]{\smash{{\SetFigFont{65}{49.2}{\rmdefault}{\mddefault}{\itdefault}{\color[rgb]{0,0,0}$A_4$}%
}}}}
\put(5670,-106){\makebox(0,0)[lb]{\smash{{\SetFigFont{65}{49.2}{\rmdefault}{\mddefault}{\itdefault}{\color[rgb]{0,0,0}$B_1$}%
}}}}
\put(931,-6751){\makebox(0,0)[lb]{\smash{{\SetFigFont{65}{49.2}{\rmdefault}{\mddefault}{\itdefault}{\color[rgb]{0,0,0}$B_3$}%
}}}}
\put(1651,-3211){\makebox(0,0)[lb]{\smash{{\SetFigFont{65}{49.2}{\rmdefault}{\mddefault}{\itdefault}{\color[rgb]{0,0,0}$B_2$}%
}}}}
\put(661,-10006){\makebox(0,0)[lb]{\smash{{\SetFigFont{65}{49.2}{\rmdefault}{\mddefault}{\itdefault}{\color[rgb]{0,0,0}$B_4$}%
}}}}
\put(21991,-541){\makebox(0,0)[lb]{\smash{{\SetFigFont{65}{49.2}{\rmdefault}{\mddefault}{\itdefault}{\color[rgb]{0,0,0}$B_1$}%
}}}}
\put(19981,-2071){\makebox(0,0)[lb]{\smash{{\SetFigFont{65}{49.2}{\rmdefault}{\mddefault}{\itdefault}{\color[rgb]{0,0,0}$B_2$}%
}}}}
\put(18841,-3691){\makebox(0,0)[lb]{\smash{{\SetFigFont{65}{49.2}{\rmdefault}{\mddefault}{\itdefault}{\color[rgb]{0,0,0}$B_3$}%
}}}}
\put(18211,-6901){\makebox(0,0)[lb]{\smash{{\SetFigFont{65}{49.2}{\rmdefault}{\mddefault}{\itdefault}{\color[rgb]{0,0,0}$B_4$}%
}}}}
\put(29172,-286){\makebox(0,0)[lb]{\smash{{\SetFigFont{65}{49.2}{\rmdefault}{\mddefault}{\itdefault}{\color[rgb]{0,0,0}$A_1$}%
}}}}
\put(32479,-3121){\makebox(0,0)[lb]{\smash{{\SetFigFont{65}{49.2}{\rmdefault}{\mddefault}{\itdefault}{\color[rgb]{0,0,0}$A_2$}%
}}}}
\put(33424,-5483){\makebox(0,0)[lb]{\smash{{\SetFigFont{65}{49.2}{\rmdefault}{\mddefault}{\itdefault}{\color[rgb]{0,0,0}$A_3$}%
}}}}
\put(33121,-10201){\makebox(0,0)[lb]{\smash{{\SetFigFont{65}{49.2}{\rmdefault}{\mddefault}{\itdefault}{\color[rgb]{0,0,0}$A_4$}%
}}}}
\end{picture}%
}
\caption{(a) The semicontiguous minor with gaps in $B$. (b) The  semicontiguous minor with gaps in $A$.}
\label{multigap}
\end{figure}

Let $A_1$ be the set consisting of the last $k_1$ indices in $A$, then $\det M_{A_1}^{B_1}$ is a circular minor of $M$.  We assume that $\TAD_{x-h,k_1}^{|h|,n}$ is the truncated Aztec diamond corresponding to the contiguous minor $\det M_{A_1}^{B_1}$ as in Theorem \ref{KWthm}, i.e.  $h$ is the integer closest to zero so that the contiguous minor $\det M_{A_1}^{B_1+h}$ is the central minor $\CM_{x,k_1}(M)$. Here $B_1+h$ is the set obtained from $B_1$ by translating it $|h|$ units counter-clockwise (resp., clockwise) if $h$ non-negative (resp., non-positive).  

If one fixes the first index $a$ of $A$, and let the first index $b$ of $B$  run along the circle, there are two cases in which $h=0$, i.e. when the contiguous minor $\det M_{A_1}^{B_1}$ is a central minor: $b=\lfloor \frac{n-1}{2}\rfloor+a+k-k_1$ and $b=\lfloor \frac{n-1}{2}\rfloor+a+k-k_1+1$.  We use the notations $0^+$ and $0^-$ to distinguish these zeros of $h$:  we define $h:=0^+$ if $b=\lfloor \frac{n-1}{2}\rfloor+a+k-k_1$, and $h:=0^-$ if $b=\lfloor \frac{n-1}{2}\rfloor+a+k-k_1+1$. We say that $h\geq 0^+$ if $h=0^+$ or $h\geq 1$, and that $h\leq 0^-$ if $h=0^-$ or $h\leq -1$.

\begin{figure}\centering

\setlength{\unitlength}{4144sp}%
\begingroup\makeatletter\ifx\SetFigFont\undefined%
\gdef\SetFigFont#1#2#3#4#5{%
  \reset@font\fontsize{#1}{#2pt}%
  \fontfamily{#3}\fontseries{#4}\fontshape{#5}%
  \selectfont}%
\fi\endgroup%

\resizebox{13cm}{!}{
\begin{picture}(0,0)%
\includegraphics{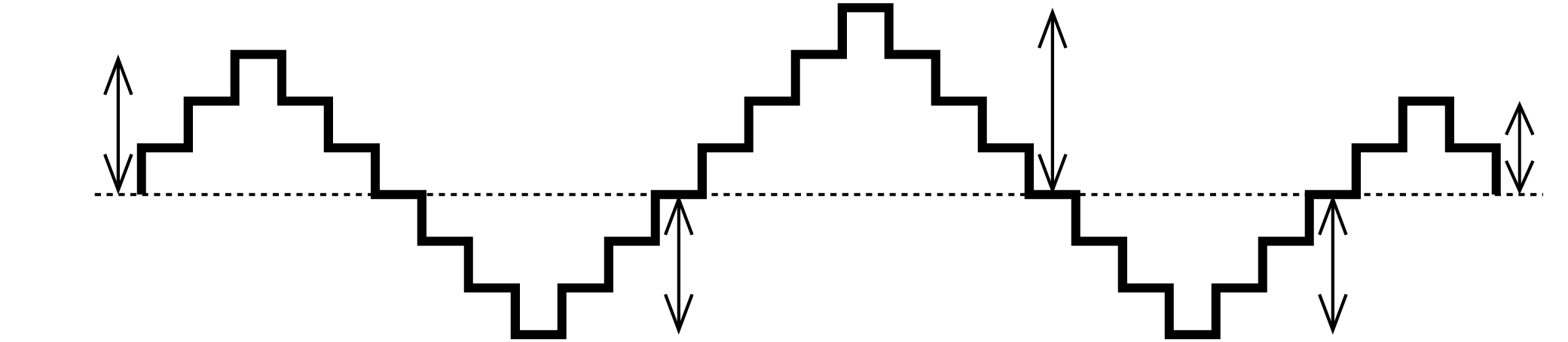}%
\end{picture}%

\begin{picture}(15857,3457)(-14,-4905)
\put(  1,-2940){\makebox(0,0)[lb]{\smash{{\SetFigFont{25}{30.0}{\rmdefault}{\mddefault}{\itdefault}{\color[rgb]{0,0,0}$k_1=3$}%
}}}}
\put(7088,-4358){\makebox(0,0)[lb]{\smash{{\SetFigFont{25}{30.0}{\rmdefault}{\mddefault}{\itdefault}{\color[rgb]{0,0,0}$t_1=3$}%
}}}}
\put(10867,-2468){\makebox(0,0)[lb]{\smash{{\SetFigFont{25}{30.0}{\rmdefault}{\mddefault}{\itdefault}{\color[rgb]{0,0,0}$k_2=4$}%
}}}}
\put(13938,-4358){\makebox(0,0)[lb]{\smash{{\SetFigFont{25}{30.0}{\rmdefault}{\mddefault}{\itdefault}{\color[rgb]{0,0,0}$t_2=3$}%
}}}}
\put(15828,-3177){\makebox(0,0)[lb]{\smash{{\SetFigFont{25}{30.0}{\rmdefault}{\mddefault}{\itdefault}{\color[rgb]{0,0,0}$k_3=2$}%
}}}}
\end{picture}}
\caption{The zigzag path $\mathcal{P}(3,4,2; 3,3)$.}
\label{zigzagbase}
\end{figure}

\medskip

Next, we encode the structure of the set $B$ by the zigzag path \\ $\mathcal{P}:=\mathcal{P}(k_1,\dotsc,k_s; t_1,\dotsc,t_{s-1})$ consisting of north and east steps, and starting and ending on the $x$-axis as follows.  $\mathcal{P}$  starts with a peak of height $k_1$, and contains alternatively a valley of depth $t_i$ and a peak of height $k_{i+1}$, for $i=1,2,\dotsc,s-1$ (see Figure \ref{zigzagbase} for an example). We use also the notation $\mathcal{P}^+:=\mathcal{P}^+(k_1,\dotsc,k_s; t_1,\dotsc,t_{s-1})$ (resp., $\mathcal{P}^-:=\mathcal{P}^-(k_1,\dotsc,k_s; t_1,\dotsc,t_{s-1})$) for the infinite lattice path obtained from $\mathcal{P}$ by extending horizontally to $+\infty$ (resp., $-\infty$) from the right (resp., left) endpoint.

\begin{figure}\centering
\includegraphics[width=12cm]{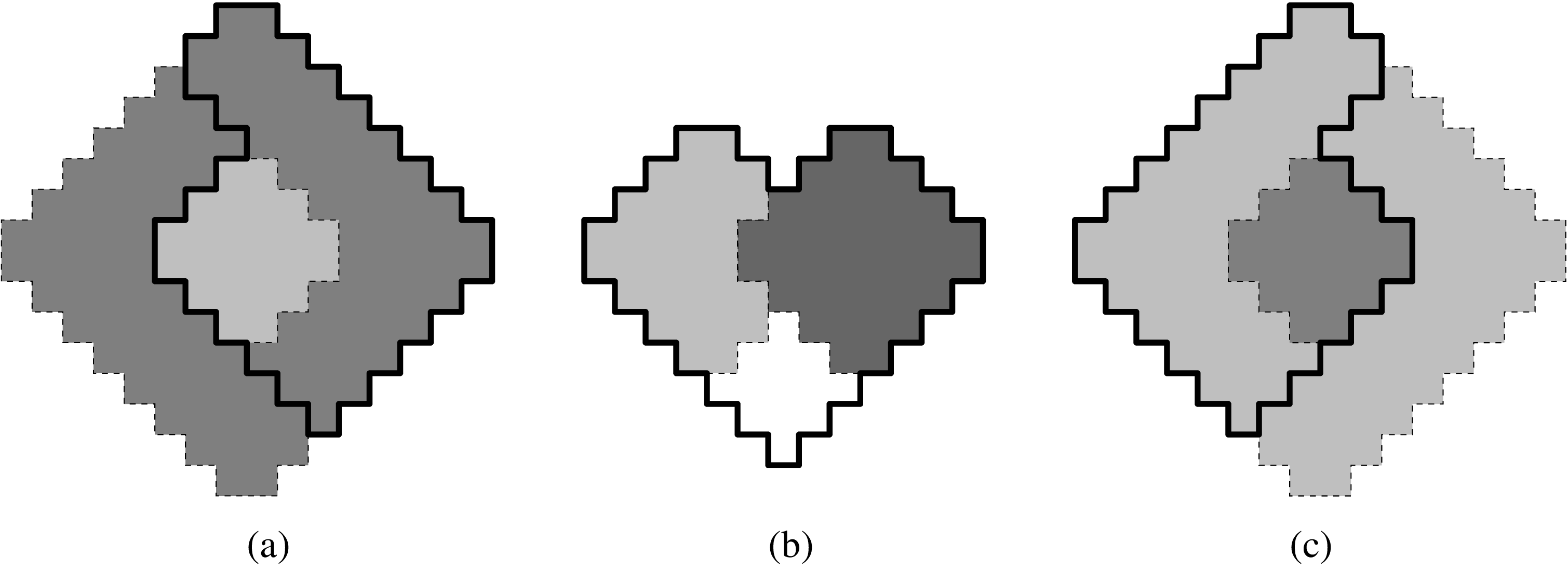}
\caption{The $L$-sum of two overlapping Aztec diamonds.}
\label{Lshape}
\end{figure}

\medskip

Before describing the family of regions corresponding to the $\SM$-minors, we define a family of new regions inspired by the Aztec diamond as follows. Consider any two overlapping Aztec diamonds $\AD_1:=\AD_{x_1,0}^{h_1}$ and $\AD_2:=\AD_{x_2,0}^{h_2}$ with center on the $x$-axis. We define the \emph{L-sum} $\AD_1\underset{L}{\oplus}\AD_2$ as in Figure \ref{Lshape}, where $\AD_1$ is the light shaded diamond and $\AD_2$ is the dark shaded one. More precisely, there are three cases to distinguish: if $\AD_1$ stays inside $\AD_2$, then $\AD_1\underset{L}{\oplus}\AD_2$ is the  $L$-shaped region restricted by the bold zigzag contour as in Figure \ref{Lshape}(a); if $\AD_2$ stays inside $\AD_1$, then $\AD_1\underset{L}{\oplus}\AD_2$ is the $L$-shaped region restricted by the bold zigzag contour as in Figure \ref{Lshape}(c); finally if the two diamonds do not contain each other, $\AD_1\underset{L}{\oplus}\AD_2$ is the $V$-shaped region as in Figure \ref{Lshape}(b). It is worth noticing that in the case when $AD_1$ and $AD_2$ do not contain each other, we have $\AD_1\underset{L}{\oplus}\AD_2 =\AD_2\underset{L}{\oplus}\AD_1$.

\begin{figure}\centering
\setlength{\unitlength}{3947sp}%
\begingroup\makeatletter\ifx\SetFigFont\undefined%
\gdef\SetFigFont#1#2#3#4#5{%
  \reset@font\fontsize{#1}{#2pt}%
  \fontfamily{#3}\fontseries{#4}\fontshape{#5}%
  \selectfont}%
\fi\endgroup%
\resizebox{15cm}{!}{
\begin{picture}(0,0)%
\includegraphics{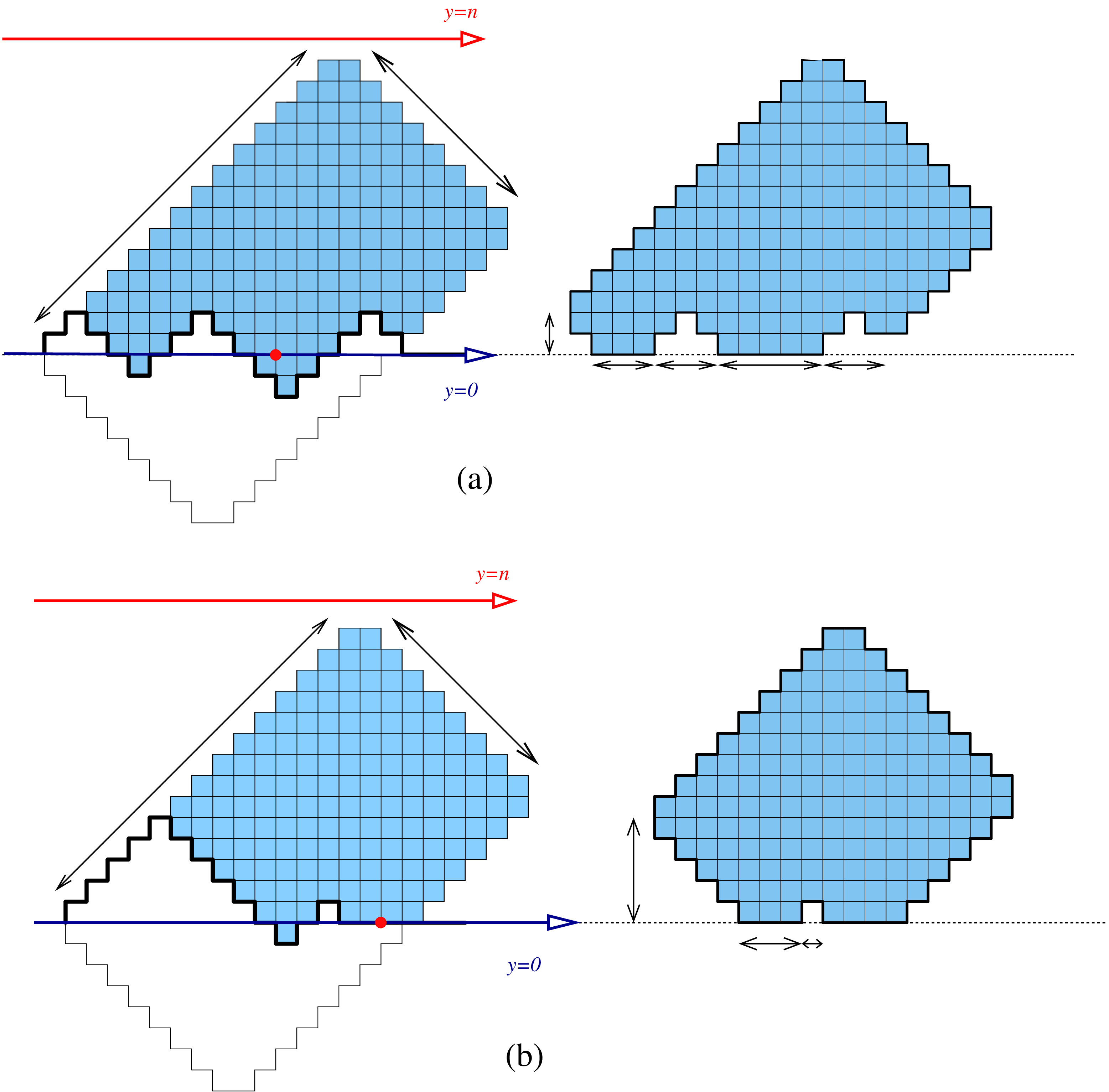}%
\end{picture}%
\begin{picture}(24891,24508)(1364,-25167)
\put(4539,-4830){\rotatebox{45.0}{\makebox(0,0)[lb]{\smash{{\SetFigFont{25}{30.0}{\rmdefault}{\mddefault}{\itdefault}{\color[rgb]{0,0,0}$h+k_1$}%
}}}}}
\put(10868,-2281){\rotatebox{315.0}{\makebox(0,0)[lb]{\smash{{\SetFigFont{25}{30.0}{\rmdefault}{\mddefault}{\itdefault}{\color[rgb]{0,0,0}$h-k+k_1$}%
}}}}}
\put(12992,-8374){\makebox(0,0)[lb]{\smash{{\SetFigFont{25}{30.0}{\rmdefault}{\mddefault}{\itdefault}{\color[rgb]{0,0,0}$k_1$}%
}}}}
\put(14883,-9555){\makebox(0,0)[lb]{\smash{{\SetFigFont{25}{30.0}{\rmdefault}{\mddefault}{\itdefault}{\color[rgb]{0,0,0}$2t_1+1$}%
}}}}
\put(16064,-9555){\makebox(0,0)[lb]{\smash{{\SetFigFont{25}{30.0}{\rmdefault}{\mddefault}{\itdefault}{\color[rgb]{0,0,0}$2k_2-1$}%
}}}}
\put(19831,-9526){\makebox(0,0)[lb]{\smash{{\SetFigFont{25}{30.0}{\rmdefault}{\mddefault}{\itdefault}{\color[rgb]{0,0,0}$2k_3-1$}%
}}}}
\put(14939,-20474){\makebox(0,0)[lb]{\smash{{\SetFigFont{25}{30.0}{\rmdefault}{\mddefault}{\itdefault}{\color[rgb]{0,0,0}$k_1$}%
}}}}
\put(18077,-22600){\makebox(0,0)[lb]{\smash{{\SetFigFont{25}{30.0}{\rmdefault}{\mddefault}{\itdefault}{\color[rgb]{0,0,0}$2t_1+1$}%
}}}}
\put(19450,-22600){\makebox(0,0)[lb]{\smash{{\SetFigFont{25}{30.0}{\rmdefault}{\mddefault}{\itdefault}{\color[rgb]{0,0,0}$2k_2-1$}%
}}}}
\put(4853,-17769){\rotatebox{45.0}{\makebox(0,0)[lb]{\smash{{\SetFigFont{25}{30.0}{\rmdefault}{\mddefault}{\itdefault}{\color[rgb]{0,0,0}$h+k_1$}%
}}}}}
\put(11605,-15329){\rotatebox{315.0}{\makebox(0,0)[lb]{\smash{{\SetFigFont{25}{30.0}{\rmdefault}{\mddefault}{\itdefault}{\color[rgb]{0,0,0}$h-k+k_1$}%
}}}}}
\put(17954,-9555){\makebox(0,0)[lb]{\smash{{\SetFigFont{25}{30.0}{\rmdefault}{\mddefault}{\itdefault}{\color[rgb]{0,0,0}$2t_2+1$}%
}}}}
\end{picture}}

\caption{Obtaining the type-1 region $\mathcal{Q}_{x,h}(k_1,\dots,k_s;t_1,\dots,t_{s-1})$ by truncating $\mathcal{H}_{x,h}(k_1,\dots,k_s;t_1,\dots,t_{s-1})$: (a) The case when $s=3$, $k_1=k_2=k_3=2$, $t_1=1$, $t_2=2$, $x=15$, $h=12$, (b) The case when $s=2$, $k_1=5$, $k_2=1$, $t_1=1$, $x=8$, $h=9$. The dots on the $x$-axis indicate the origin $(0,0)$.}
\label{newminor1}
\end{figure}

\begin{figure}\centering
\setlength{\unitlength}{3947sp}%
\begingroup\makeatletter\ifx\SetFigFont\undefined%
\gdef\SetFigFont#1#2#3#4#5{%
  \reset@font\fontsize{#1}{#2pt}%
  \fontfamily{#3}\fontseries{#4}\fontshape{#5}%
  \selectfont}%
\fi\endgroup%
\resizebox{15cm}{!}{
\begin{picture}(0,0)%
\includegraphics{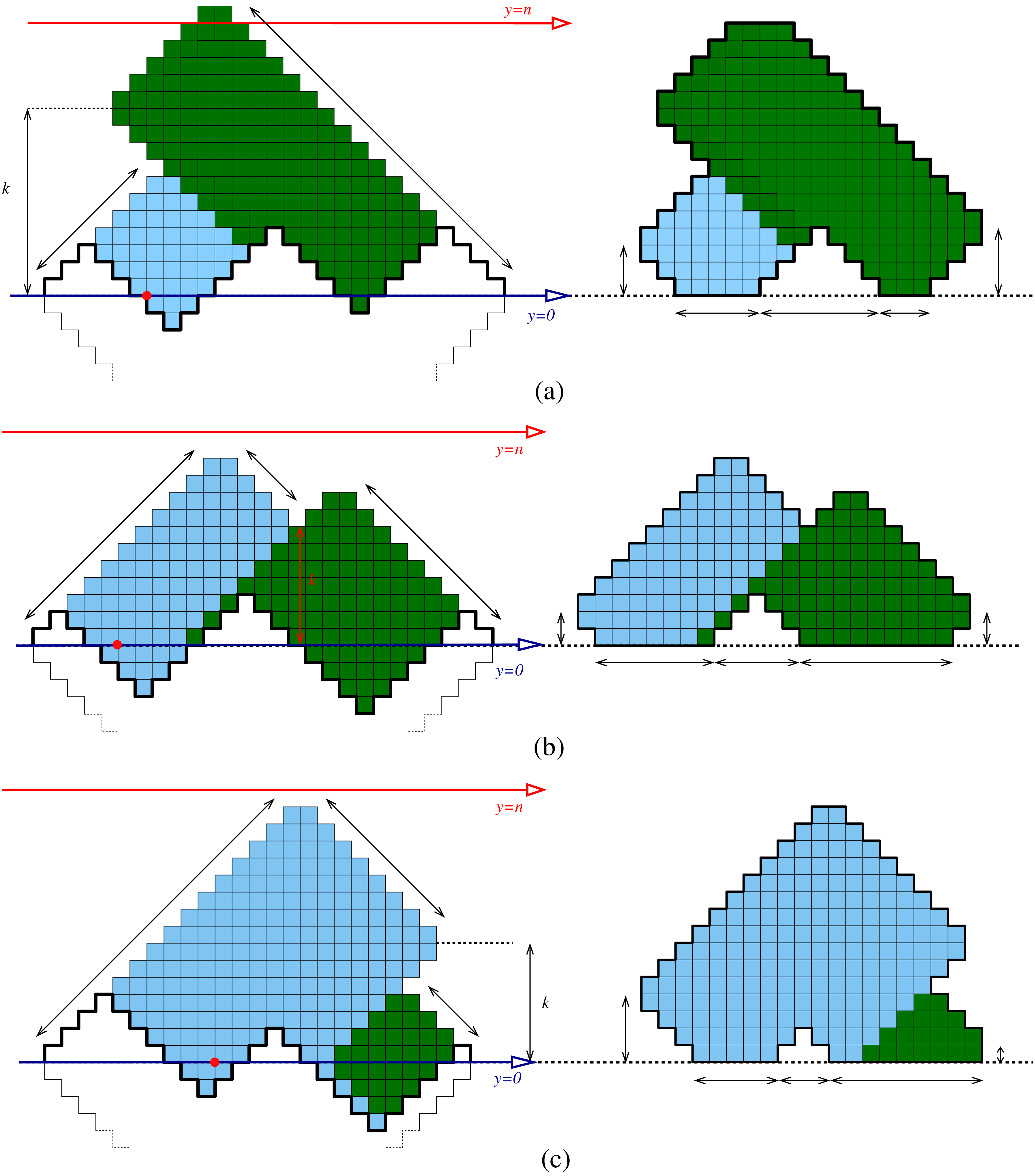}%
\end{picture}%
%
%

\begin{picture}(28694,32605)(2309,-32554)
\put(5726,-25908){\rotatebox{45.0}{\makebox(0,0)[lb]{\smash{{\SetFigFont{25}{30.0}{\rmdefault}{\mddefault}{\itdefault}{\color[rgb]{0,0,0}$h+k_1=15$}%
}}}}}
\put(12286,-22418){\rotatebox{315.0}{\makebox(0,0)[lb]{\smash{{\SetFigFont{25}{30.0}{\rmdefault}{\mddefault}{\itdefault}{\color[rgb]{0,0,0}$h+k_1-k=8$}%
}}}}}
\put(14174,-26860){\rotatebox{315.0}{\makebox(0,0)[lb]{\smash{{\SetFigFont{25}{30.0}{\rmdefault}{\mddefault}{\itdefault}{\color[rgb]{0,0,0}$2k+t-h-k_1-1$}%
}}}}}
\put(18952,-28725){\makebox(0,0)[lb]{\smash{{\SetFigFont{25}{30.0}{\rmdefault}{\mddefault}{\itdefault}{\color[rgb]{0,0,0}$k_1$}%
}}}}
\put(22023,-30614){\makebox(0,0)[lb]{\smash{{\SetFigFont{25}{30.0}{\rmdefault}{\mddefault}{\itdefault}{\color[rgb]{0,0,0}$2t_1+1$}%
}}}}
\put(24150,-30614){\makebox(0,0)[lb]{\smash{{\SetFigFont{25}{30.0}{\rmdefault}{\mddefault}{\itdefault}{\color[rgb]{0,0,0}$2k_2-1$}%
}}}}
\put(26985,-30614){\makebox(0,0)[lb]{\smash{{\SetFigFont{25}{30.0}{\rmdefault}{\mddefault}{\itdefault}{\color[rgb]{0,0,0}$2t_2+1$}%
}}}}
\put(30528,-29197){\makebox(0,0)[lb]{\smash{{\SetFigFont{25}{30.0}{\rmdefault}{\mddefault}{\itdefault}{\color[rgb]{0,0,0}$k_3$}%
}}}}
\put(10868,-1354){\rotatebox{315.0}{\makebox(0,0)[lb]{\smash{{\SetFigFont{25}{30.0}{\rmdefault}{\mddefault}{\itdefault}{\color[rgb]{0,0,0}$2k+t-h-k_1-1=17$}%
}}}}}
\put(18895,-7679){\makebox(0,0)[lb]{\smash{{\SetFigFont{25}{30.0}{\rmdefault}{\mddefault}{\itdefault}{\color[rgb]{0,0,0}$k_1$}%
}}}}
\put(21730,-9096){\makebox(0,0)[lb]{\smash{{\SetFigFont{25}{30.0}{\rmdefault}{\mddefault}{\itdefault}{\color[rgb]{0,0,0}$2t_1+1$}%
}}}}
\put(24564,-9096){\makebox(0,0)[lb]{\smash{{\SetFigFont{25}{30.0}{\rmdefault}{\mddefault}{\itdefault}{\color[rgb]{0,0,0}$2k_2-1$}%
}}}}
\put(27163,-9096){\makebox(0,0)[lb]{\smash{{\SetFigFont{25}{30.0}{\rmdefault}{\mddefault}{\itdefault}{\color[rgb]{0,0,0}$2t_2+1$}%
}}}}
\put(30470,-7443){\makebox(0,0)[lb]{\smash{{\SetFigFont{25}{30.0}{\rmdefault}{\mddefault}{\itdefault}{\color[rgb]{0,0,0}$k_3$}%
}}}}
\put(4253,-5781){\rotatebox{45.0}{\makebox(0,0)[lb]{\smash{{\SetFigFont{25}{30.0}{\rmdefault}{\mddefault}{\itdefault}{\color[rgb]{0,0,0}$h+k_1=7$}%
}}}}}
\put(4396,-15241){\rotatebox{45.0}{\makebox(0,0)[lb]{\smash{{\SetFigFont{25}{30.0}{\rmdefault}{\mddefault}{\itdefault}{\color[rgb]{0,0,0}$h+k_1=11$}%
}}}}}
\put(8892,-11521){\rotatebox{314.0}{\makebox(0,0)[lb]{\smash{{\SetFigFont{25}{30.0}{\rmdefault}{\mddefault}{\itdefault}{\color[rgb]{0,0,0}$h-k+k_1=4$}%
}}}}}
\put(13380,-13647){\rotatebox{314.0}{\makebox(0,0)[lb]{\smash{{\SetFigFont{25}{30.0}{\rmdefault}{\mddefault}{\itdefault}{\color[rgb]{0,0,0}$2k+t-h-k_1-1=9$}%
}}}}}
\put(16688,-17371){\makebox(0,0)[lb]{\smash{{\SetFigFont{25}{30.0}{\rmdefault}{\mddefault}{\itdefault}{\color[rgb]{0,0,0}$k_1$}%
}}}}
\put(19522,-18788){\makebox(0,0)[lb]{\smash{{\SetFigFont{25}{30.0}{\rmdefault}{\mddefault}{\itdefault}{\color[rgb]{0,0,0}$2t_1+1$}%
}}}}
\put(22593,-18788){\makebox(0,0)[lb]{\smash{{\SetFigFont{25}{30.0}{\rmdefault}{\mddefault}{\itdefault}{\color[rgb]{0,0,0}$2k_2-1$}%
}}}}
\put(25428,-18788){\makebox(0,0)[lb]{\smash{{\SetFigFont{25}{30.0}{\rmdefault}{\mddefault}{\itdefault}{\color[rgb]{0,0,0}$2t_2+1$}%
}}}}
\put(29916,-17607){\makebox(0,0)[lb]{\smash{{\SetFigFont{25}{30.0}{\rmdefault}{\mddefault}{\itdefault}{\color[rgb]{0,0,0}$k_3$}%
}}}}
\end{picture}
}
\caption{Obtaining the region $\mathcal{Q}_{x,h}(k_1,\dots,k_s;t_1,\dots,t_{s-1})$ of Type 2 from $\mathcal{H}_{x,h}(k_1,\dots,k_s;t_1,\dots,t_{s-1})$: (a) for $s=3$, $k_1=3$, $k_2=4$, $k_3=4$, $t_1=2$, $t_2=1$, $x=5$, $h=4$, (b) for  $s=3$, $k_1=2$, $k_2=3,$ $k_3=2$, $t_1=3$, $t_2=4$, $x=15$, $h=9$, and (c) for $s=3$, $k_1=4,$ $k_2=2$, $k_3=1$, $t_1=2$, $t_2=4$, $x=16$, $h=11$. The portion of $\AD_1$ has the light shading, and the portion of $\AD_2$ has the dark shading. }
\label{newminor3}
\end{figure}

\begin{figure}\centering
\setlength{\unitlength}{3947sp}%
\begingroup\makeatletter\ifx\SetFigFont\undefined%
\gdef\SetFigFont#1#2#3#4#5{%
  \reset@font\fontsize{#1}{#2pt}%
  \fontfamily{#3}\fontseries{#4}\fontshape{#5}%
  \selectfont}%
\fi\endgroup%
\resizebox{15cm}{!}{
\begin{picture}(0,0)%
\includegraphics{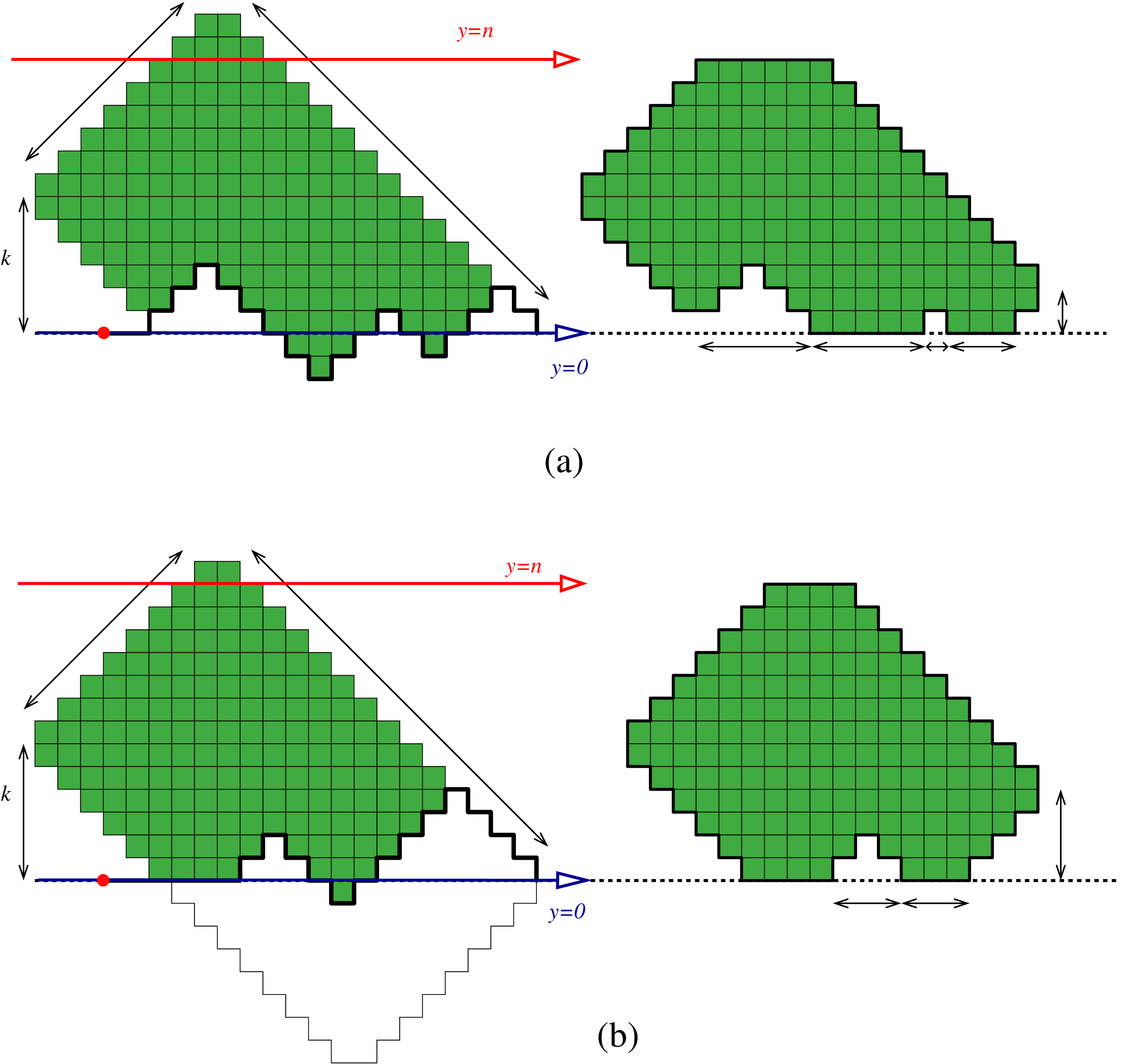}%
\end{picture}%
%
%

\begin{picture}(23219,22039)(695,-25167)
\put(1488,-5538){\rotatebox{45.0}{\makebox(0,0)[lb]{\smash{{\SetFigFont{25}{30.0}{\rmdefault}{\mddefault}{\itdefault}{\color[rgb]{0,0,0}$k+t-h-k_1=8$}%
}}}}}
\put(7805,-4358){\rotatebox{315.0}{\makebox(0,0)[lb]{\smash{{\SetFigFont{25}{30.0}{\rmdefault}{\mddefault}{\itdefault}{\color[rgb]{0,0,0}$2k+t-h-k_1=14$}%
}}}}}
\put(1561,-17056){\rotatebox{45.0}{\makebox(0,0)[lb]{\smash{{\SetFigFont{25}{30.0}{\rmdefault}{\mddefault}{\itdefault}{\color[rgb]{0,0,0}$k+t-h-k_1=8$}%
}}}}}
\put(7726,-15751){\rotatebox{315.0}{\makebox(0,0)[lb]{\smash{{\SetFigFont{25}{30.0}{\rmdefault}{\mddefault}{\itdefault}{\color[rgb]{0,0,0}$2k+t-h-k_1=14$}%
}}}}}
\put(22951,-20806){\makebox(0,0)[lb]{\smash{{\SetFigFont{25}{30.0}{\rmdefault}{\mddefault}{\itdefault}{\color[rgb]{0,0,0}$k_2$}%
}}}}
\put(19591,-22441){\makebox(0,0)[lb]{\smash{{\SetFigFont{25}{30.0}{\rmdefault}{\mddefault}{\itdefault}{\color[rgb]{0,0,0}$2t_1+1$}%
}}}}
\put(18061,-22501){\makebox(0,0)[lb]{\smash{{\SetFigFont{25}{30.0}{\rmdefault}{\mddefault}{\itdefault}{\color[rgb]{0,0,0}$2k_1-1$}%
}}}}
\put(15697,-10961){\makebox(0,0)[lb]{\smash{{\SetFigFont{25}{30.0}{\rmdefault}{\mddefault}{\itdefault}{\color[rgb]{0,0,0}$2k_1-1$}%
}}}}
\put(17947,-10931){\makebox(0,0)[lb]{\smash{{\SetFigFont{25}{30.0}{\rmdefault}{\mddefault}{\itdefault}{\color[rgb]{0,0,0}$2t_1+1$}%
}}}}
\put(19582,-10856){\makebox(0,0)[lb]{\smash{{\SetFigFont{25}{30.0}{\rmdefault}{\mddefault}{\itdefault}{\color[rgb]{0,0,0}$2k_2-1$}%
}}}}
\put(20887,-10826){\makebox(0,0)[lb]{\smash{{\SetFigFont{25}{30.0}{\rmdefault}{\mddefault}{\itdefault}{\color[rgb]{0,0,0}$2t_2+1$}%
}}}}
\put(23047,-9641){\makebox(0,0)[lb]{\smash{{\SetFigFont{25}{30.0}{\rmdefault}{\mddefault}{\itdefault}{\color[rgb]{0,0,0}$k_3$}%
}}}}
\end{picture}%
}
\caption{Obtaining the region $\mathcal{Q}_{x,h}(k_1,\dots,k_s;t_1,\dots,t_{s-1})$ of Type 3 from $\mathcal{H}_{x,h}(k_1,\dots,k_s;t_1,\dots,t_{s-1})$: (a) The example for $s=3$, $k_1=3$, $k_2=1$, $k_3=2$, $t_1=2$, $t_2=1$, $x=0$, $h=-2$. (b) The example for $s=2$, $k_1=2$, $k_2=4$, $t_1=1$, $x=1$, $h=-3$.}
\label{newminor2}
\end{figure}

\bigskip

We are now ready to define the family of regions \[\mathcal{Q}=\mathcal{Q}_{x,h}(k_1,\dots,k_s;t_1,\dots,t_{s-1})\] corresponding to the  minor  $\SM_{a,b}(k_1,\dotsc,k_s;t_1,\dotsc,t_{s-1})$ as follows.

\medskip

 There are three types of the regions $\mathcal{Q}$ as follows:

\begin{enumerate}
\item[\emph{Type 1.}] $t\leq h-k$.

  \quad Remove all unit squares in the Aztec rectangle $\AR_{x-h,0}^{h+k_1,h-k+k_1}$, which are below the zigzag path $\mathcal{P}^+:=\mathcal{P}^+(k_1,\dotsc,k_s; t_1,\dotsc,t_{s-1})$ with the left endpoint at the left corner of the Aztec rectangle (see the bold zigzag paths on the left pictures in Figure \ref{newminor1}). We get the region $\mathcal{H}=\mathcal{H}_{x,h}(k_1,\dots,k_s;t_1,\dots,t_{s-1})$ (shown by the shaded region on the left pictures in Figure \ref{newminor1}). Finally,  we define  $\mathcal{Q}$ to be the portion of  $\mathcal{H}$ between the lines $y=0$ and $y=n$.

\item[\emph{Type 2.}] $h\geq 0^+$ and $t> h-k$.

 \quad The region $\mathcal{Q}$ is obtained by applying the above double-trimming process to the region $\mathcal{R}:=\AD_1\underset{L}{\oplus}\AD_2$, where $\AD_1:=\AD_{x-h,0}^{h+k_1}$ and $\AD_2:=\AD_{x-h+t,0}^{2k+t-h-k_1-1}$ (instead of the Aztec rectangle $\AR_{x-h,0}^{h+k_1,h-k+k_1}$ as in type 1). In particular, $\mathcal{H}$ is the portion of $\mathcal{R}$ above the zigzag path $\mathcal{P}^+$; and our region $\mathcal{Q}$ is obtained from $\mathcal{H}$ by truncating the part below the line $y=0$ and the part above the line $y=n$. See Figure \ref{newminor3} for three examples corresponding to three possible shapes of the $L$-sum $\mathcal{R}$ as described in Figure \ref{Lshape}.

\item[\emph{Type 3.}] $h\leq 0^-$.

 \quad We start with the Aztec rectangle $\AR_{x-h+t,k}^{k+t-h-k_1,2k+t-h-k_1}$, then remove all unit squares below the zigzag path $\mathcal{P}^-$ with the \emph{right} endpoint at the \emph{right} corner of the rectangle, and truncate the part below the line $y=0$ and the part above the line $y=n$ from the resulting region. Two examples of the region $\mathcal{Q}$ in this case are shown in Figure \ref{newminor2}.
\end{enumerate}

\begin{rmk}\label{heightrmk}\begin{enumerate}
\item[(1).] If $s=1$, then $\mathcal{Q}=\mathcal{Q}_{x,h}(k_1;\emptyset):=\TAD_{x-h,k_1}^{|h|,n}$, the truncated Aztec diamond corresponding to the contiguous minor $\det M_{A_1}^{B_1}$ as in Theorem \ref{KWthm}.

\item[(2).] We notice that in Type 1, the top of the region $\mathcal{H}$ is always below or on the line $y=n$ (as $h+k_1\leq h+k+t\leq 2h\leq n$, since we are assuming that $t\leq h-k$, and $|h|\leq n/2$ by definition). However, in Types 2 and 3, the top of the region $\mathcal{H}$ may stay above the line $y=n$.
\end{enumerate}
\end{rmk}

\begin{thm}\label{genthm}
Assume that $s,k_1,\dotsc,k_s,t_1,\dots,t_{s-1}$ are positive integers, and that $M$ is an $n\times n$ matrix. Assume in addition that $\TAD_{x-h,k_1}^{|h|,n}$ is the truncated Aztec diamond corresponding to the contiguous minor $\det M_{A_1}^{B_1}$ defined as in Theorem \ref{KWthm}. Then
\begin{equation}\label{geneq1}
\SM_{a,b}(k_1,\dotsc,k_s;t_1,\dotsc,t_{s-1})=\Pol(\mathcal{Q}_{x,h}(k_1,\dots,k_s;t_1,\dots,t_{s-1})).
\end{equation}
\end{thm}

Next, we describe the region corresponding to the semicontiguous minor $\det M_{A}^{B}$, where $B$ is contiguous (and $A$ may contain some gaps).

 We now assume the decomposition $A=\bigcup_{i=1}^{s}A_i$, where $A_i$'s are contiguous index sets appearing in  \emph{clockwise} order around the circle. Assume in addition that $|A_i|=k_i>0$, and that the size of the gap between $A_i$ and $A_{i+1}$ is $t_i>0$ (see Figure \ref{multigap}(b) for an example for $n=60$, $s=4$, $k_1=4$, $k_2=3$, $k_3=2$, $k_4=5$, $t_1=2$, $t_2=1$, $t_3=3$). We also assume that $a$ and $b$ are respectively the first indices of $A$ and $B$ as usual (i.e., we have now $B=\{b,b+1,\dotsc,b+k-1\}$). Denote by \[\overline{\SM}_{a,b}(k_1,\dotsc,k_s;t_1,\dotsc,t_{s-1})=\overline{\SM}_{a,b}(k_1,\dotsc,k_s;t_1,\dotsc,t_{s-1})(M)\] this minor. We also note that $\overline{\SM}_{a,b}(k_1,\dotsc,k_s;t_1,\dotsc,t_{s-1})$ is contiguous when $s=1$.

 \medskip

Intuitively, the region $\overline{\mathcal{Q}}_{x,h}(k_1,\dots,k_s;t_1,\dots,t_{s-1})$ corresponding to the above  $\overline{SM}$-minor is obtained from the region $\mathcal{Q}_{x,h}(k_1,\dots,k_s;$ $t_1,\dots,t_{s-1})$ by reflecting it over a vertical line and translating horizontally. In particular,  the region \\ $\overline{\mathcal{Q}}_{x,h}(k_1,\dots,k_s;t_1,\dots,t_{s-1})$ is obtained by truncating the portion below the line $y=0$ and truncating the portion above the line $y=n$ of the region  \[\overline{\mathcal{H}}=\overline{\mathcal{H}}_{x,h}(k_1,\dots,k_s;t_1,\dots,t_{s-1})\] that is defined as follows. If $t\leq h-k$, $\overline{\mathcal{H}}$ is obtained from the  Aztec rectangle $\AR_{x-h,k}^{h-k+k_1,h+k_1}$ by removing all unit squares below the zigzag path $\overline{\mathcal{P}}^-$, where $\overline{\mathcal{P}}:=\mathcal{P}(k_s,\dotsc,k_1; t_{s-1},\dotsc,t_1)$, with the right endpoint at the rightcorner of the Aztec rectangle (see the left picture on the top row in Figure \ref{Newreverse}); if $h\geq 0^+$ and $t> h-k$, then $\overline{\mathcal{H}}$ is obtained by the same process for the $L$-sum $\overline{\mathcal{R}}:=\AD_{x-h,0}^{h+k_1}\underset{L}{\oplus}\AD_{x-h-t,0}^{2k+t-h-k_1-1}$  (illustrated by the left picture on the middle row in Figure \ref{Newreverse}); finally if $h\leq 0^-$, then $\overline{\mathcal{H}}$ is obtained from $\AR_{x-h-t,0}^{k+t-k_1-h,k+t-h}$ by removing all the unit squares below the zigzag line $\overline{\mathcal{P}}^+$ with the left endpoint at the left corner of the Aztec rectangle  (pictured by the left picture on the bottom row in Figure \ref{Newreverse}). 

\begin{figure}\centering
\setlength{\unitlength}{3947sp}%
\begingroup\makeatletter\ifx\SetFigFont\undefined%
\gdef\SetFigFont#1#2#3#4#5{%
  \reset@font\fontsize{#1}{#2pt}%
  \fontfamily{#3}\fontseries{#4}\fontshape{#5}%
  \selectfont}%
\fi\endgroup%
\resizebox{15cm}{!}{
\begin{picture}(0,0)%
\includegraphics{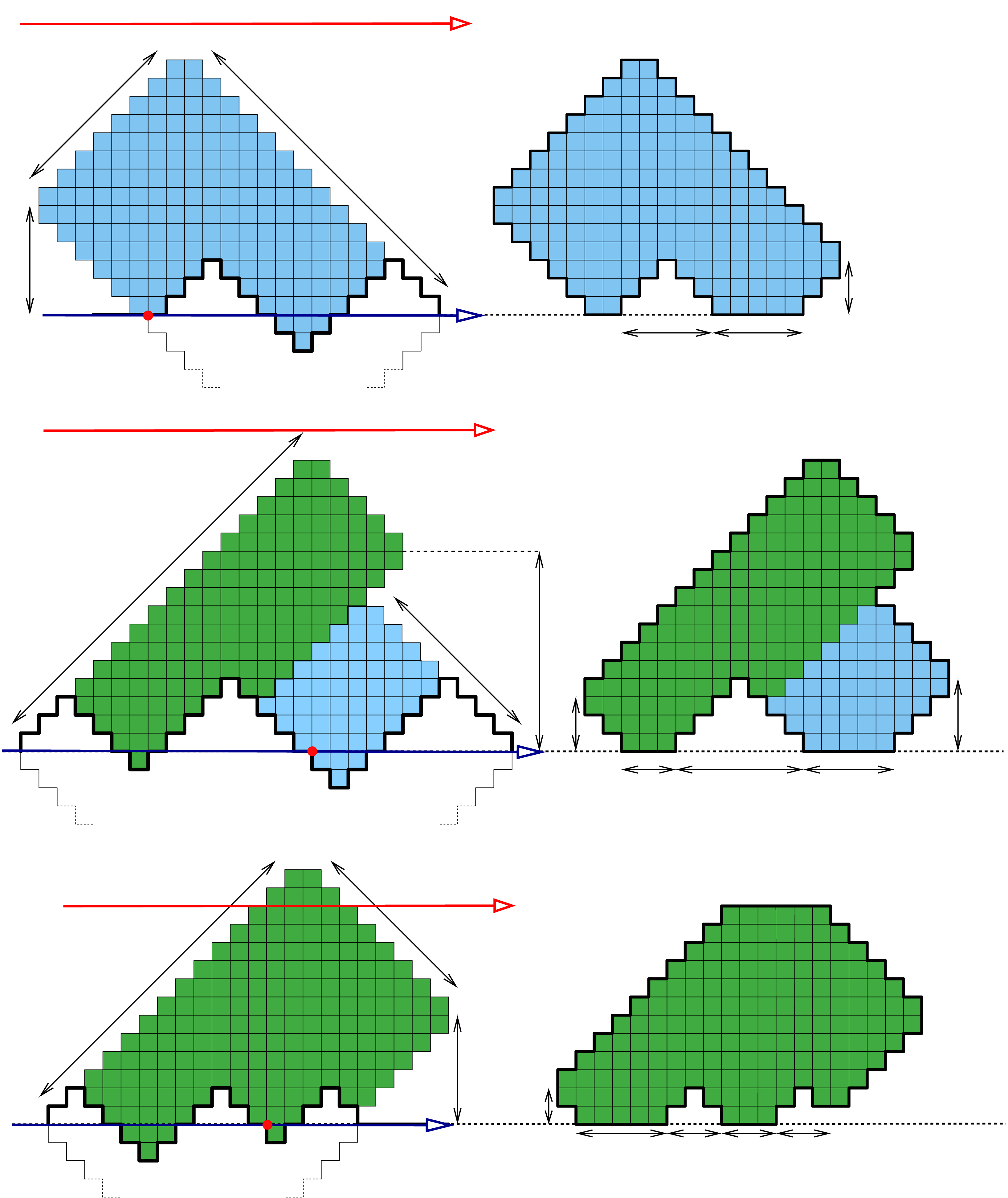}%
\end{picture}%
%
%

\begin{picture}(26153,31089)(407,-30599)
\put(12757,-27271){\makebox(0,0)[lb]{\smash{{\SetFigFont{25}{30.0}{\rmdefault}{\mddefault}{\itdefault}{\color[rgb]{0,0,0}k}%
}}}}
\put(17723,-29632){\makebox(0,0)[lb]{\smash{{\SetFigFont{34}{40.8}{\rmdefault}{\mddefault}{\updefault}{\color[rgb]{0,0,0}$2k_2-1$}%
}}}}
\put(19377,-29632){\makebox(0,0)[lb]{\smash{{\SetFigFont{34}{40.8}{\rmdefault}{\mddefault}{\updefault}{\color[rgb]{0,0,0}$2t_1+1$}%
}}}}
\put(20794,-29632){\makebox(0,0)[lb]{\smash{{\SetFigFont{34}{40.8}{\rmdefault}{\mddefault}{\updefault}{\color[rgb]{0,0,0}$2k_1-1$}%
}}}}
\put(13707,-28215){\makebox(0,0)[lb]{\smash{{\SetFigFont{34}{40.8}{\rmdefault}{\mddefault}{\updefault}{\color[rgb]{0,0,0}$k_3$}%
}}}}
\put(10400,-22546){\rotatebox{315.0}{\makebox(0,0)[lb]{\smash{{\SetFigFont{34}{40.8}{\rmdefault}{\mddefault}{\updefault}{\color[rgb]{0,0,0}$k+t-h-k_1=8$}%
}}}}}
\put(3314,-25144){\rotatebox{45.0}{\makebox(0,0)[lb]{\smash{{\SetFigFont{34}{40.8}{\rmdefault}{\mddefault}{\updefault}{\color[rgb]{0,0,0}$2k+t-h-k_1=14$}%
}}}}}
\put(14647,-18531){\makebox(0,0)[lb]{\smash{{\SetFigFont{34}{40.8}{\rmdefault}{\mddefault}{\updefault}{\color[rgb]{0,0,0}$k_3$}%
}}}}
\put(16773,-20185){\makebox(0,0)[lb]{\smash{{\SetFigFont{34}{40.8}{\rmdefault}{\mddefault}{\updefault}{\color[rgb]{0,0,0}$2t_2+1$}%
}}}}
\put(19135,-20184){\makebox(0,0)[lb]{\smash{{\SetFigFont{34}{40.8}{\rmdefault}{\mddefault}{\updefault}{\color[rgb]{0,0,0}$2k_2-1$}%
}}}}
\put(21734,-20184){\makebox(0,0)[lb]{\smash{{\SetFigFont{34}{40.8}{\rmdefault}{\mddefault}{\updefault}{\color[rgb]{0,0,0}$2t_1+1$}%
}}}}
\put(25749,-18295){\makebox(0,0)[lb]{\smash{{\SetFigFont{34}{40.8}{\rmdefault}{\mddefault}{\updefault}{\color[rgb]{0,0,0}$k_1$}%
}}}}
\put(11104,-15176){\rotatebox{315.0}{\makebox(0,0)[lb]{\smash{{\SetFigFont{34}{40.8}{\rmdefault}{\mddefault}{\updefault}{\color[rgb]{0,0,0}$h+k_1=8$}%
}}}}}
\put(2995,-15224){\rotatebox{45.0}{\makebox(0,0)[lb]{\smash{{\SetFigFont{34}{40.8}{\rmdefault}{\mddefault}{\updefault}{\color[rgb]{0,0,0}$2k+t-h-k_1-1=16$}%
}}}}}
\put(17245,-8846){\makebox(0,0)[lb]{\smash{{\SetFigFont{34}{40.8}{\rmdefault}{\mddefault}{\updefault}{\color[rgb]{0,0,0}$2k_2-1$}%
}}}}
\put(19607,-8846){\makebox(0,0)[lb]{\smash{{\SetFigFont{34}{40.8}{\rmdefault}{\mddefault}{\updefault}{\color[rgb]{0,0,0}$2t_1+1$}%
}}}}
\put(22914,-6956){\makebox(0,0)[lb]{\smash{{\SetFigFont{34}{40.8}{\rmdefault}{\mddefault}{\updefault}{\color[rgb]{0,0,0}$k_1$}%
}}}}
\put(1814,-2940){\rotatebox{45.0}{\makebox(0,0)[lb]{\smash{{\SetFigFont{34}{40.8}{\rmdefault}{\mddefault}{\updefault}{\color[rgb]{0,0,0}$h-k+k_1=8$}%
}}}}}
\put(8505,-2704){\rotatebox{315.0}{\makebox(0,0)[lb]{\smash{{\SetFigFont{34}{40.8}{\rmdefault}{\mddefault}{\updefault}{\color[rgb]{0,0,0}$h+k_1=14$}%
}}}}}
\put(710,-6247){\makebox(0,0)[lb]{\smash{{\SetFigFont{25}{30.0}{\rmdefault}{\mddefault}{\itdefault}{\color[rgb]{0,0,0}k}%
}}}}
\put(14520,-16525){\makebox(0,0)[lb]{\smash{{\SetFigFont{25}{30.0}{\rmdefault}{\mddefault}{\itdefault}{\color[rgb]{0,0,0}k}%
}}}}
\put(15833,-29632){\makebox(0,0)[lb]{\smash{{\SetFigFont{34}{40.8}{\rmdefault}{\mddefault}{\updefault}{\color[rgb]{0,0,0}$2t_2+1$}%
}}}}
\put(10926,-29533){\makebox(0,0)[lb]{\smash{{\SetFigFont{25}{30.0}{\rmdefault}{\mddefault}{\itdefault}{\color[rgb]{0,0,.56}y=0}%
}}}}
\put(11881,-19486){\makebox(0,0)[lb]{\smash{{\SetFigFont{25}{30.0}{\rmdefault}{\mddefault}{\itdefault}{\color[rgb]{0,0,.56}y=0}%
}}}}
\put(12241,-8296){\makebox(0,0)[lb]{\smash{{\SetFigFont{25}{30.0}{\rmdefault}{\mddefault}{\itdefault}{\color[rgb]{0,0,.56}y=0}%
}}}}
\put(12931,-22606){\makebox(0,0)[lb]{\smash{{\SetFigFont{25}{30.0}{\rmdefault}{\mddefault}{\itdefault}{\color[rgb]{1,0,0}y=n}%
}}}}
\put(10216,134){\makebox(0,0)[lb]{\smash{{\SetFigFont{25}{30.0}{\rmdefault}{\mddefault}{\itdefault}{\color[rgb]{1,0,0}y=n}%
}}}}
\put(10839,-10554){\makebox(0,0)[lb]{\smash{{\SetFigFont{25}{30.0}{\rmdefault}{\mddefault}{\itdefault}{\color[rgb]{1,0,0}y=n}%
}}}}
\end{picture}}
\caption{Obtaining the region $\overline{\mathcal{Q}}_{x,h}(k_1,\dots,k_s;t_1,\dots,t_{s-1})$ from the region $\overline{\mathcal{H}}_{x,h}(k_1,\dots,k_s;t_1,\dots,t_{s-1})$: (a) The case when $s=2, k_1=3,k_2=3,t_1=2, x=13, h=11$. (b) The example for $s=3,k_1=4,k_2=4,k_3=3,t_1=2,t_2=1,x=7, h=4$. (c) The example for $s=3, k_1=k_2=k_3=2, t_1=1,t_2=2, x=4, h=-1$.}
\label{Newreverse}
\end{figure}

\medskip

Similar to Theorem \ref{genthm}, we have the following theorem for $\overline{\mathcal{Q}}$-type regions.

\begin{thm}\label{genthm2}
Assume that $s,k_1,\dotsc,k_s,t_1,\dots,t_{s-1}$ are positive integers, and that $M$ is a given $n\times n$ matrix. Assume in addition that $\TAD_{x-h,k_1}^{|h|,n}$ is the truncated Aztec diamond corresponding to the contiguous minor $\det M_{A_1}^{B_1}$ as in Theorem \ref{KWthm}. Then
\begin{equation}\label{geneq2}
\overline{\SM}_{a,b}(k_1,\dotsc,k_s;t_1,\dotsc,t_{s-1})=\Pol(\overline{\mathcal{Q}}_{x,h}(k_1,\dots,k_s;t_1,\dots,t_{s-1})).
\end{equation}
\end{thm}

One readily sees that Theorems \ref{genthm} and \ref{genthm2} verify Conjecture \ref{KWconj}. The proofs of our main theorems will be given in Section 4.

\section{Dodgson Condensation and Kuo Condensation}

This section is devoted to two powerful tools in combinatorics, Dodgson and Kuo condensations.

\medskip

Given a matrix $M$, we denote by $M_{\widehat{a_1,\dots,a_k}}^{\widehat{b_l,\dots,b_1}}$ the matrix obtained from $M$ by removing the rows $a_1,a_2,\dots,a_k$ and the columns $b_1,b_2,\dots,b_l$. We employ the following Dodgson condensation \cite{Dodgson} and its variation in our proof.

\begin{lem}[Dodgson condensation]\label{DJ}
Let $M$ be an $n\times n$ matrix. Then
\begin{equation}
\det M_{\widehat{a}}^{\widehat{c}}\det M_{\widehat{b}}^{\widehat{d}}=\det M \det M_{\widehat{a,b}}^{\widehat{c,d}}+\det M_{\widehat{b}}^{\widehat{c}}\det M_{\widehat{a}}^{\widehat{d}},
\end{equation}
where the indices $a,b,d,c$ appear in counter-clockwise order around the circle. See Figure \ref{diskKuo} for an example.
\end{lem}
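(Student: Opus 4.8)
The plan is to recognize Lemma~\ref{DJ} as the (generalized) Desnanot--Jacobi identity and to prove it through Jacobi's theorem on the minors of the adjugate matrix. First I would reduce to the case where $M$ is invertible: both sides of the asserted identity are polynomials in the $n^2$ entries of $M$, so once the identity is verified on the Zariski-dense open locus $\det M\neq 0$ it holds identically (equivalently, as an identity in the polynomial ring $\mathbb{Z}[m_{i,j}]$). Assuming $\det M\neq 0$, let $B=\operatorname{adj}(M)$ be the adjugate, so that $MB=BM=(\det M)I$ and the entries of $B$ are exactly the signed single-deletion minors that occur in the statement, namely $B_{i,j}=(-1)^{i+j}\det M_{\widehat{j}}^{\widehat{i}}$.

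The key idea is to evaluate one fixed $2\times 2$ minor of $B$ in two different ways. I take the minor of $B$ on rows $\{c,d\}$ and columns $\{a,b\}$ (the index sets are transposed relative to the original, which is exactly the effect of passing to the adjugate). Expanding this $2\times 2$ determinant directly and substituting $B_{i,j}=(-1)^{i+j}\det M_{\widehat{j}}^{\widehat{i}}$, both resulting products carry the common sign $(-1)^{a+b+c+d}$, so that
\[
\det B\big[\{c,d\}\mid\{a,b\}\big]
=(-1)^{a+b+c+d}\Big(\det M_{\widehat{a}}^{\widehat{c}}\,\det M_{\widehat{b}}^{\widehat{d}}-\det M_{\widehat{b}}^{\widehat{c}}\,\det M_{\widehat{a}}^{\widehat{d}}\Big).
\]
On the other hand, Jacobi's theorem on complementary minors states that a $k\times k$ minor of $\operatorname{adj}(M)$ equals $(\det M)^{k-1}$ times the complementary $(n-k)\times(n-k)$ minor of $M$, up to the sign $(-1)^{s}$, where $s$ is the sum of the chosen row indices plus the sum of the chosen column indices. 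Applied with $k=2$ to the same minor, the complementary minor of $M$ is precisely $\det M_{\widehat{a,b}}^{\widehat{c,d}}$, giving
\[
\det B\big[\{c,d\}\mid\{a,b\}\big]=(-1)^{a+b+c+d}\,\det M\cdot\det M_{\widehat{a,b}}^{\widehat{c,d}}.
\]
Equating the two expressions, cancelling the common sign $(-1)^{a+b+c+d}$, and rearranging yields the claimed formula at once.

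The step I expect to demand the most care is the sign bookkeeping, and this is exactly where the ordering hypothesis is used. Jacobi's theorem requires the rows and columns of the adjugate minor to be listed in increasing order, whereas the direct $2\times2$ expansion fixes a particular ordering of the two factors in each product; the two conventions become consistent, and the second product enters with a plus sign, precisely because $a,b,d,c$ occur in counter-clockwise order. The subtle point is that the identity is invariant under simultaneously reversing the pair $(a,b)$ and the pair $(c,d)$, but reversing a single pair flips the sign of the $\det M\cdot\det M_{\widehat{a,b}}^{\widehat{c,d}}$ term; so the genuine requirement is a parity condition relating the order of the deleted rows to the order of the deleted columns, and the stated cyclic order of $a,b,d,c$ encodes exactly this parity. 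A quick sanity check on a $3\times3$ matrix (reversing only $a$ and $b$) confirms that the sign indeed flips, so the hypothesis cannot be dropped. If one prefers a route that does not invoke Jacobi's theorem, an equivalent self-contained argument is to permute rows and columns so as to move $a,b$ and $c,d$ into the first and last positions, prove the resulting corner version by a block/Schur-complement computation, and then transport the permutation signs back; this, however, reproduces essentially the same sign analysis.
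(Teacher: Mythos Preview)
Your argument via Jacobi's complementary-minor theorem is a standard and valid route to the generalized Desnanot--Jacobi identity; the reduction to $\det M\neq 0$ by Zariski density and the two evaluations of the $2\times 2$ adjugate minor are correctly set up, and you are right that the only delicate point is the sign bookkeeping, which is exactly what the cyclic hypothesis on $a,b,d,c$ is meant to control.

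Note, however, that the paper does not supply its own proof of this lemma: it is quoted as the classical Dodgson condensation with a reference to \cite{Dodgson} (and to \cite{Muir}), so there is no in-paper argument to compare yours against. One clarification worth recording: in this paper the indices $a,b,c,d$ are node labels coming from an ambient matrix (cf.\ Figure~\ref{diskKuo}, where $M=\Omega_{1,2,3,4}^{12,11,10,9}$ and $c=12$, $d=9$), and under the paper's convention for $M_A^B$ the column labels are listed in \emph{clockwise} order. Thus the counter-clockwise condition on $a,b,d,c$ translates, after passing to the internal row/column positions of $M$, to the usual hypothesis ``first deleted row precedes second deleted row, and first deleted column precedes second deleted column,'' which is precisely the ordering under which your Jacobi computation yields the identity with the $+$ sign. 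Making this translation explicit would tighten the sign discussion in your write-up.
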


The following variation of Dodgson condensation, called  the ``jaw move", is due to Kenyon and Wilson (see the proof of Theorem 7 in  \cite{KW}).
\begin{lem}[Jaw Move]\label{Jaw}
Let $M$ be an $n\times (n+1)$ matrix. Then
\begin{equation}
\det M^{\widehat{e}}\det M^{\widehat{d,f}}_{\widehat{g}}=\det M^{\widehat{d}} \det M^{\widehat{e,f}}_{\widehat{g}}+\det M^{\widehat{f}}\det M^{\widehat{d,e}}_{\widehat{g}},
\end{equation}
where the indices $g,f,e,d$ appear in counter-clockwise order around the circle. The jaw move is illustrated in Figure \ref{disk6a}.
\end{lem}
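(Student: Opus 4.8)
The plan is to derive the Jaw Move from the ordinary Dodgson condensation of Lemma~\ref{DJ} by a \emph{bordering} trick that turns the rectangular $n\times(n+1)$ matrix $M$ into a square one. Concretely, I would form the $(n+1)\times(n+1)$ matrix $\widehat{M}$ obtained from $M$ by adjoining one extra row equal to the standard basis (co)vector supported at column $d$, i.e.\ the new row has a $1$ in column $d$ and $0$'s elsewhere. The point of this particular choice is that every Laplace expansion along the border row collapses to a single term, so that each determinant of $\widehat M$ (or of a minor of $\widehat M$) that still contains the border row reduces at once to an ordinary minor of $M$.

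Next I would apply Lemma~\ref{DJ} to $\widehat M$, condensing on the two rows $\{g,\ \text{border}\}$ and the two columns $\{e,f\}$. Deleting the border row always recovers $M$, so the factors that omit it become the maximal minors $\det M^{\widehat e}$, $\det M^{\widehat f}$ and the $(n-1)\times(n-1)$ minor $\det M^{\widehat{e,f}}_{\widehat g}$. The remaining factors $\det\widehat M$, $\det\widehat M^{\widehat e}_{\widehat g}$ and $\det\widehat M^{\widehat f}_{\widehat g}$ still involve the border row; expanding each along that row (whose only nonzero entry sits in column $d$, which survives since $d\notin\{e,f\}$) turns them into $\pm\det M^{\widehat d}$, $\pm\det M^{\widehat{d,e}}_{\widehat g}$ and $\pm\det M^{\widehat{d,f}}_{\widehat g}$ respectively. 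Substituting these six evaluations into the three-term Dodgson identity, with rows $a=\text{border}$, $b=g$ and columns $c=e$, $d_{\mathrm{Dodgson}}=f$, reproduces the left-hand side $\det M^{\widehat e}\det M^{\widehat{d,f}}_{\widehat g}$ and exactly the two right-hand products $\det M^{\widehat d}\det M^{\widehat{e,f}}_{\widehat g}$ and $\det M^{\widehat f}\det M^{\widehat{d,e}}_{\widehat g}$ of the Jaw Move.

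The real work, and the step I expect to be the main obstacle, is the sign bookkeeping. Three independent sources of signs must be reconciled: the intrinsic signs of Dodgson condensation, the cofactor sign incurred when expanding along the border row (which differs between the $n\times n$ and the $(n-1)\times(n-1)$ factors, since the deleted column set shifts the position of column $d$), and the sign from the chosen insertion slot of the border row. I would fix the insertion position of the border row relative to row $g$ once and for all, and then check that the hypothesis that $g,f,e,d$ occur in counterclockwise order forces the three surviving signs to coincide, so that the identity emerges with a genuine ``$+$'' between the two right-hand terms rather than a cancellation. Since both sides are multilinear in the rows and columns of $M$ and the only residual ambiguity is an overall $\pm1$ per term, a clean way to pin the signs down without a long case analysis is to verify the relation on one well-chosen generic instance and propagate.

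As an alternative route, one may instead keep the border row generic, say $r=(r_1,\dots,r_{n+1})$, apply Dodgson to the resulting $\widehat M$, expand $\det\widehat M$ and the mixed factors as linear forms in the $r_j$, and then read off the Jaw Move as the coefficient of $r_d$. This is essentially the same computation, but it makes transparent that the statement is a specialization of a Plücker-type relation coupling the maximal minors of $M$ with the maximal minors of the row-deleted matrix $M_{\widehat g}$, which is reassuring as a sanity check on the final signs.
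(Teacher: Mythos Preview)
The paper does not actually prove the Jaw Move: it is stated as Lemma~\ref{Jaw} with attribution to Kenyon and Wilson and a pointer to the proof of Theorem~7 in \cite{KW}, and is then used as a black box. So there is no ``paper's own proof'' to compare against; your proposal supplies an argument where the paper gives none.

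Your bordering derivation is correct and is the standard way to obtain such three-term Pl\"ucker-type identities from Desnanot--Jacobi. Two small remarks on execution. First, Lemma~\ref{DJ} as stated in the paper already has its signs normalized by the cyclic-order hypothesis, so the cleanest way to invoke it is to insert the border row immediately counter-clockwise of row~$g$: then ``border, $g$, $f$, $e$'' inherits the required counter-clockwise order directly from the hypothesis that $g,f,e,d$ are counter-clockwise, and you only have to reconcile the three cofactor signs coming from the Laplace expansions along the border row. Second, those three cofactor signs are governed solely by how many of the deleted indices lie between the border row and row~$g$, and between column~$d$ and the deleted column(s); the counter-clockwise placement of $d$ relative to $e$ and $f$ is exactly what makes the parities agree, so no case analysis is needed beyond that observation. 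Your fallback of checking one generic instance is also perfectly adequate, since the identity is polynomial in the entries of~$M$.
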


\begin{figure}\centering
\includegraphics[width=13cm]{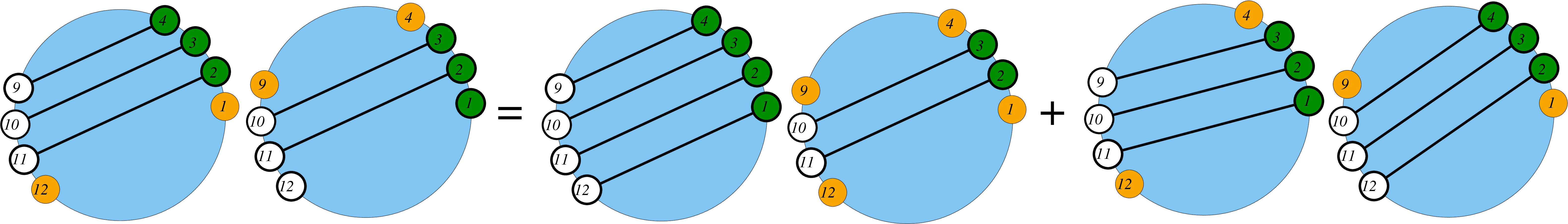}
\caption{Illustration of Dodgson condensation for $M=\Omega_{1,2,3,4}^{12,11,10,9}$, $a=1,$ $b=4,$ $c=12,$ and $d=9$.}
\label{diskKuo}
\end{figure}

\begin{figure}\centering
\includegraphics[width=13cm]{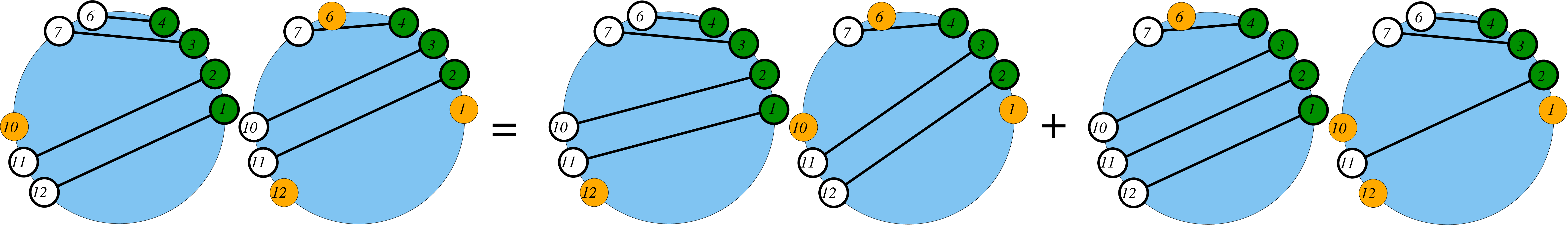}
\caption{Illustration of jaw move for $M=\Omega_{1,2,3,4}^{12,11,10,7,6}$, $d=12,$ $e=10,$ $f=6,$ and  $g=1$.}
\label{disk6a}
\end{figure}

A \emph{perfect matching} of a (simple, finite) graph $G=(V,E)$ is a collection of disjoint edges in $E$ which cover all vertex set $V$. Similar to the case of tilings, we define the weight $\W(G)$ of the graph $G$ to be the sum of the weights of all perfect matchings of $G$, where the \emph{weight} of a perfect matching is the product of weights of its edges.

The \emph{dual graph} of a region $R$ on the square lattice is the graph whose vertices are the unit squares in $R$ and whose edges connect precisely two unit squares sharing and edge. If the dominoes in $R$ are weighted, then each edge in its dual graph $G$ has the same weight as the corresponding domino. This way one can identify the tilings of a region $R$ with the perfect matchings of its dual graph $G$. In particular, we have $\W(R)=\W(G)$.

Kuo \cite{Kuo} proved the following combinatorial interpretations of the Dodgson condensation.

\begin{thm}[Theorem 5.1 in \cite{Kuo}]\label{Kuo1}
Let $G=(V_1,V_2,E)$ be a (weighted) planar bipartite graph with $|V_1|=|V_2|$. Assume that $u,v,w,s$ are four vertices appearing in a cyclic order on a face of $G$. Assume in addition that $u,w\in V_1$ and $v,s\in V_2$. Then
\begin{align}\label{Kuoeq1}
\W(G)W(G-\{u,v,w,s\})=&\W(G-\{u,v\})\W(G-\{w,s\})\notag\\&+\W(G-\{u,s\})\W(G-\{v,w\}).
\end{align}
\end{thm}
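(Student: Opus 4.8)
The goal is to prove Theorem~\ref{Kuo1} (Kuo's combinatorial interpretation of Dodgson condensation). The plan is to reduce the matching identity to the algebraic Dodgson--Jacobi identity of Lemma~\ref{DJ} via the Lindstr\"om--Gessel--Viennot (LGV) machinery, which is the standard bridge between weighted perfect matchings of planar bipartite graphs and determinants. First I would pass from the bipartite graph $G=(V_1,V_2,E)$ to its \emph{biadjacency matrix} $N$, whose rows are indexed by $V_1$ and columns by $V_2$, with $N_{i,j}$ equal to the weight of the edge between the $i$-th vertex of $V_1$ and the $j$-th vertex of $V_2$ (and $0$ if no edge exists). Because $|V_1|=|V_2|$, this is a square matrix, and the crucial observation is that the weighted count of perfect matchings equals the permanent of $N$; for a \emph{planar} bipartite graph one can choose a Kasteleyn-type sign convention so that $\W(G)=\pm\det N$. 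More to the point, deleting a vertex $u\in V_1$ and a vertex $v\in V_2$ and counting matchings of $G-\{u,v\}$ corresponds exactly to deleting the corresponding row and column of $N$ and taking the determinant of the resulting minor.

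The key step is to set up this dictionary so that the four distinguished vertices $u,w\in V_1$ and $v,s\in V_2$, lying in cyclic order on a common face, map to four indices appearing in counter-clockwise order around the circle in the sense required by Lemma~\ref{DJ}. Concretely, I would identify $u$ with the row index $a$, $w$ with the row index $b$, $v$ with the column index $c$, and $s$ with the column index $d$, and verify that the cyclic order $u,v,w,s$ on the face translates into the counter-clockwise order $a,b,d,c$ required by the Dodgson identity. Under this identification the four mixed matchings correspond to the four minors
\[
\W(G-\{u,v\})=\det N_{\widehat a}^{\widehat c},\quad \W(G-\{w,s\})=\det N_{\widehat b}^{\widehat d},\quad
\W(G-\{u,s\})=\det N_{\widehat a}^{\widehat d},\quad \W(G-\{v,w\})=\det N_{\widehat b}^{\widehat c},
\]
while $\W(G)=\det N$ and $\W(G-\{u,v,w,s\})=\det N_{\widehat{a,b}}^{\widehat{c,d}}$. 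Substituting all six of these into the Dodgson identity of Lemma~\ref{DJ} yields equation~\eqref{Kuoeq1} immediately, provided every occurrence of $\pm\det$ carries a consistent sign.

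The main obstacle is precisely \textbf{sign control}. The identity $\W(G)=\det N$ only holds up to a global sign fixed by the chosen ordering and Kasteleyn orientation, and when one deletes a row/column pair the induced sign changes by a parity that depends on the positions of the deleted indices. The heart of the proof is therefore to check that the six signs arising from $\W(G)$, $\W(G-\{u,v,w,s\})$, and the four mixed terms multiply out so that the two products on the right of Lemma~\ref{DJ} enter~\eqref{Kuoeq1} with the \emph{same} sign as the left-hand product, with no stray minus sign. The cleanest way to discharge this is to invoke planarity: the fact that $u,v,w,s$ lie on a single face in cyclic order, together with the Kasteleyn condition that every face contributes a prescribed sign, guarantees the sign cancellation is uniform. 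I would verify this by tracking the cofactor signs $(-1)^{a+c}$, $(-1)^{b+d}$, etc., attached to each minor and confirming that the cross terms $(-1)^{a+d}(-1)^{b+c}=(-1)^{a+c}(-1)^{b+d}$, so that both products on the right-hand side of the algebraic identity pick up identical signs and~\eqref{Kuoeq1} comes out with all-positive signs as stated. Once the sign bookkeeping is settled, the theorem follows directly from Lemma~\ref{DJ}.
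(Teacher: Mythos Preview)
The paper does not give its own proof of this theorem; it is quoted verbatim from Kuo's paper \cite{Kuo}, where it is proved by a direct bijection on \emph{superpositions of matchings}: one overlays a perfect matching of $G$ with one of $G-\{u,v,w,s\}$, decomposes the union into doubled edges and alternating paths/cycles, and shows that the resulting multisets are in weight-preserving bijection with the analogous superpositions coming from the two products on the right-hand side. No determinants enter at all.

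Your route through Kasteleyn signings and Lemma~\ref{DJ} is a genuinely different strategy, and a version of it can be made to work, but the proposal as written has a real gap at exactly the point you flag as the ``main obstacle.'' Your sign verification is the identity $(-1)^{a+d}(-1)^{b+c}=(-1)^{a+c}(-1)^{b+d}$, which is tautologically true for \emph{any} $a,b,c,d$ and therefore cannot be what enforces the positive sign in \eqref{Kuoeq1}. If that were the whole story, the identity would hold for an arbitrary quadruple $u,w\in V_1$, $v,s\in V_2$, with no hypothesis on planarity or on the four vertices sharing a face in cyclic order---and it does not (for bad orderings one of the terms flips sign). What is actually needed is a statement relating the sign of $\det K_{\widehat a}^{\widehat c}$ to $\W(G-\{u,v\})$ under a fixed Kasteleyn orientation, and then an argument that uses the Kasteleyn face condition on the distinguished face to show the three products of signs agree. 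That step is nontrivial and is precisely where the cyclic-order hypothesis is consumed; your proposal asserts it but does not carry it out.

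A smaller point: LGV is not the relevant bridge here. LGV turns families of nonintersecting lattice paths into determinants; the determinant formula for perfect matchings of a general planar bipartite graph is Kasteleyn's theorem (equivalently the permanent--determinant method). You do invoke Kasteleyn later, but the opening sentence suggests the wrong tool. If you want to pursue this determinant approach, the clean references are Kasteleyn's original work or the treatment in Kenyon's lecture notes, where the interaction between minor signs and boundary vertex removal is spelled out.
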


\begin{thm}[Theorem 5.2 in \cite{Kuo}]\label{Kuo2}
Let $G=(V_1,V_2,E)$ be a planar bipartite graph with $|V_1|=|V_2|+1$. Assume that $u,v,w,s$ are four vertices appearing in a cyclic order on a face of $G$. Assume in addition that $u,v,w\in V_1$ and $s\in V_2$. Then
\begin{align}\label{Kuoeq2}
\W(G-\{v\})\W(G-\{u,w,s\})=&\W(G-\{u\})\W(G-\{v,w,s\})\notag\\
&+\W(G-\{w\})\W(G-\{u,v,s\}).
\end{align}
\end{thm}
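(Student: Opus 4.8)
The plan is to prove \eqref{Kuoeq2} by the superposition-of-matchings method that underlies all Kuo-type condensations, exhibiting a weight-preserving correspondence between the matchings counted on the two sides. First I would translate each product into a sum over ordered pairs of perfect matchings: writing $\mathcal{M}(H)$ for the set of perfect matchings of a graph $H$ and declaring the weight of a pair of matchings to be the product of all edge-weights occurring in the two matchings together, the left-hand side of \eqref{Kuoeq2} becomes the total weight of the family
\[\mathcal{A}=\{(M_1,M_2):M_1\in\mathcal{M}(G-\{v\}),\ M_2\in\mathcal{M}(G-\{u,w,s\})\},\]
while the two summands on the right become the total weights of
\[\mathcal{B}_1=\{(N_1,N_2):N_1\in\mathcal{M}(G-\{u\}),\ N_2\in\mathcal{M}(G-\{v,w,s\})\}\]
and of the analogous family $\mathcal{B}_2$ attached to $\W(G-\{w\})\W(G-\{u,v,s\})$. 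It thus suffices to produce a weight-preserving bijection $\mathcal{A}\longleftrightarrow\mathcal{B}_1\sqcup\mathcal{B}_2$.

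The key step is to analyze the superposition $M_1\cup M_2$ of a pair in $\mathcal{A}$, viewed as a multiset of edges. Since $|V_1|=|V_2|+1$, all six graphs above are balanced, so in each factor every vertex except those deleted is covered exactly once. Hence $M_1\cup M_2$ covers every vertex twice except $u,v,w,s$, each covered once, and therefore decomposes into doubled edges, alternating cycles, and exactly two open alternating paths whose endpoints are precisely $u,v,w,s$. Recording the incident matching at each deficiency vertex, $v$ is met by an $M_2$-edge while $u,w,s$ are met by $M_1$-edges.

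The heart of the argument — and the step I expect to be the main obstacle — is to determine how the two paths pair these four endpoints. Two constraints cooperate. The bipartite parity constraint says that an alternating path joins endpoints of equal color iff it has even length; consequently a path with two $M_1$-incident ends must join opposite colors, while a mixed $M_1$/$M_2$ path must join equal colors. As $u,v,w\in V_1$ and $s\in V_2$, this already forbids the diagonal pairing $\{u,w\},\{v,s\}$ and leaves only $\{u,v\},\{w,s\}$ or $\{v,w\},\{u,s\}$. The planarity constraint says that, since $u,v,w,s$ occur in this cyclic order on a common face and the two paths are vertex-disjoint, their pairing must be non-crossing; this reconfirms the same two alternatives and, importantly, excludes the diagonal pairing within $\mathcal{B}_1$ and $\mathcal{B}_2$, where the parity test alone would permit it. We thereby get a partition $\mathcal{A}=\mathcal{A}_1\sqcup\mathcal{A}_2$ by pairing type, while $\mathcal{B}_1$ and $\mathcal{B}_2$ consist solely of the pairings $\{u,v\},\{w,s\}$ and $\{v,w\},\{u,s\}$ respectively.

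Finally I would realize the bijection by toggling a single path. Given $(M_1,M_2)\in\mathcal{A}_1$, interchange the membership of the edges lying on the $u$--$v$ path: the new $M_1'$ then misses $u$ rather than $v$, giving $M_1'\in\mathcal{M}(G-\{u\})$, while the new $M_2'$ misses $v$ and still misses $w,s$, giving $M_2'\in\mathcal{M}(G-\{v,w,s\})$, so that $(M_1',M_2')\in\mathcal{B}_1$. The $w$--$s$ path together with all doubled edges and cycles is untouched, the combined edge multiset is unchanged, so the pair-weight is preserved, and repeating the toggle inverts the map. The same toggle performed along the $v$--$w$ path is a weight-preserving bijection $\mathcal{A}_2\leftrightarrow\mathcal{B}_2$. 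Adding the two bijections yields \eqref{Kuoeq2}.
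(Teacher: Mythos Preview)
Your proof is correct and follows the standard superposition-of-matchings argument that Kuo himself uses in \cite{Kuo}. Note, however, that the present paper does not give its own proof of this statement: Theorem~\ref{Kuo2} is simply quoted from \cite{Kuo} and used as a black box in the proof of Theorem~\ref{genthm}, so there is no in-paper proof to compare against. Your argument is precisely the one a reader would find upon following the citation, including the key observation that in the families $\mathcal{B}_1$ and $\mathcal{B}_2$ the bipartite parity constraint alone does not rule out the crossing pairing $\{u,w\},\{v,s\}$, and planarity is genuinely needed there.
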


\begin{thm}[Theorem 5.3 in \cite{Kuo}]\label{Kuo3}
Let $G=(V_1,V_2,E)$ be a planar bipartite graph with $|V_1|=|V_2|$. Assume that $u,v,w,s$ are four vertices appearing in a cyclic order on a face of $G$. Assume in addition that $u,v\in V_1$ and $w,s\in V_2$. Then
\begin{align}\label{Kuoeq3}
\W(G-\{u,s\})\W(G-\{v,w\})=&\W(G)\W(G-\{u,v,w,s\})\notag\\
&+\W(G-\{u,w\})\W(G-\{v,s\}).
\end{align}
\end{thm}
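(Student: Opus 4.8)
The plan is to prove the identity by the standard superposition (double-matching) argument for Kuo condensation. First I would rewrite each of the three products of weights as a sum over \emph{pairs} of perfect matchings, one drawn from each of the two graphs being multiplied, and record each such pair by its superposition: the multiset union of the two matchings, in which an edge lying in both appears with multiplicity two. In every superposition each vertex has degree at most two, so the superposition decomposes into doubled edges, even alternating cycles, and a collection of alternating paths whose endpoints are exactly the vertices deleted from one of the two matchings but not the other. For all three products these endpoints lie in $\{u,v,w,s\}$; since they are the only degree-one vertices, each superposition contains exactly two such paths.

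Next I would determine, for each product, which pairings of $\{u,v,w,s\}$ into two path-endpoint pairs can occur, using two constraints. The \emph{bipartite} constraint: along any path the vertices alternate between $V_1$ and $V_2$, and the terminal edge at each special vertex is forced to come from the matching in which that vertex is present; comparing the resulting edge-parity against the color classes $u,v\in V_1$, $w,s\in V_2$ rules out half of the conceivable pairings. The \emph{planarity} constraint: since $u,v,w,s$ lie on a common face in the cyclic order $u,v,w,s$ and the two paths are vertex-disjoint curves in the plane, their endpoint pairing must be non-crossing, which eliminates the diagonal pairing $\{u,w\}\,\&\,\{v,s\}$. After applying both, I expect that for $\W(G-\{u,s\})\W(G-\{v,w\})$ the pairing is either $\{u,s\}\,\&\,\{v,w\}$ (Type A) or $\{u,v\}\,\&\,\{w,s\}$ (Type B); that for $\W(G)\W(G-\{u,v,w,s\})$ only Type A survives; and that for $\W(G-\{u,w\})\W(G-\{v,s\})$ only Type B survives.

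The key step is a weight-preserving bijection realized by re-coloring edges along the two paths. Given a Type A superposition coming from the left-hand product, I assign the doubled edges and alternating cycles to the two output matchings in the obvious alternating way, and along each path I swap which matching supplies the terminal edges; a direct check of the covered vertices shows the two outputs form a perfect matching of $G$ and a perfect matching of $G-\{u,v,w,s\}$, and that each such pair is produced exactly once. Because the multiset of edges is unchanged, the product of edge weights is preserved, so the Type A contribution to the left-hand side equals $\W(G)\W(G-\{u,v,w,s\})$. The identical re-coloring applied to a Type B superposition matches it bijectively and weight-preservingly with the pairs counted by $\W(G-\{u,w\})\W(G-\{v,s\})$. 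Summing over the two types then yields the identity.

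I expect the main obstacle to be the bookkeeping in the path re-coloring: in each parity case one must verify that, after swapping the terminal colors, the two output matchings cover every internal path vertex exactly once and leave precisely the prescribed endpoints uncovered, and that the reassignment is genuinely invertible. The planarity exclusion of the diagonal pairing should also be stated carefully, since it is exactly what keeps Type C from contaminating the two single-type products on the right.
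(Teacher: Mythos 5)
Your proposal is correct in outline, but note that the paper itself contains no proof of this statement: it is imported verbatim as Theorem 5.3 of Kuo's paper \cite{Kuo}, and your superposition/path-swapping argument is essentially Kuo's original proof, so there is no divergence of approach to compare. Two points of precision are worth recording. First, your division of labor between the two constraints is slightly off: for the left-hand product $\W(G-\{u,s\})\W(G-\{v,w\})$ the diagonal pairing $\{u,w\}\,\&\,\{v,s\}$ is already excluded by the bipartite parity count alone (the terminal edges at $u,s$ come from one matching and those at $v,w$ from the other, forcing a $u$--$w$ path to have even length while joining opposite color classes); planarity is genuinely needed only to exclude the crossing pairing from the two right-hand products, where parity permits it. Second, and more importantly, your re-coloring recipe as stated --- ``along each path I swap which matching supplies the terminal edges'' --- does not work if applied literally to both paths: in a Type A superposition, swapping along both the $u$--$s$ path and the $v$--$w$ path merely interchanges the roles of $M_1$ and $M_2$ and lands you back in the left-hand family $\W(G-\{u,s\})\W(G-\{v,w\})$ rather than in $\W(G)\W(G-\{u,v,w,s\})$. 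The correct move is to swap along exactly one of the two paths, e.g.\ the path incident to $u$: setting $N_1=M_1\,\triangle\,P$ and $N_2=M_2\,\triangle\,P$ with $P$ the $u$-path sends Type A pairs to $(G,\,G-\{u,v,w,s\})$ pairs and Type B pairs to $(G-\{v,s\},\,G-\{u,w\})$ pairs, is manifestly weight-preserving since the edge multiset is unchanged, and is its own inverse, which gives the invertibility you were worried about. With that one correction your argument goes through and reproduces the cited proof.
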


We usualy mention the results in Theorems \ref{Kuo1}, \ref{Kuo2} and \ref{Kuo3} as \emph{Kuo condensation}.

\section{Proofs of the main results}

\begin{proof} [Proof of Theorem \ref{genthm}]
We prove the equation (\ref{geneq1}) by induction on $k+s+t$. Recall that $k$ is the cardinality of the index sets $A$ and $B$, $s$ is the number of contiguous components in $B$, and $t$ is the sum of the sizes of the gaps in $B$. The base case is the case when $s=1$, i.e. when $\SM_{a,b}(k_1,\dotsc,k_s;t_1,\dotsc,t_{s-1})$ is a contiguous minor. This case follows directly from Kenyon-Wilson Theorem \ref{KWthm}.

 For the induction step, we assume that $s\geq 2$ and that (\ref{geneq1}) holds for any $\mathcal{Q}$-type regions in which the sum of their $k$-,$s$- and $t$-parameters is strictly less than $k+s+t$.

 \begin{figure}
 \setlength{\unitlength}{3947sp}%
\begingroup\makeatletter\ifx\SetFigFont\undefined%
\gdef\SetFigFont#1#2#3#4#5{%
  \reset@font\fontsize{#1}{#2pt}%
  \fontfamily{#3}\fontseries{#4}\fontshape{#5}%
  \selectfont}%
\fi\endgroup%

\resizebox{13cm}{!}{
 \begin{picture}(0,0)%
\includegraphics{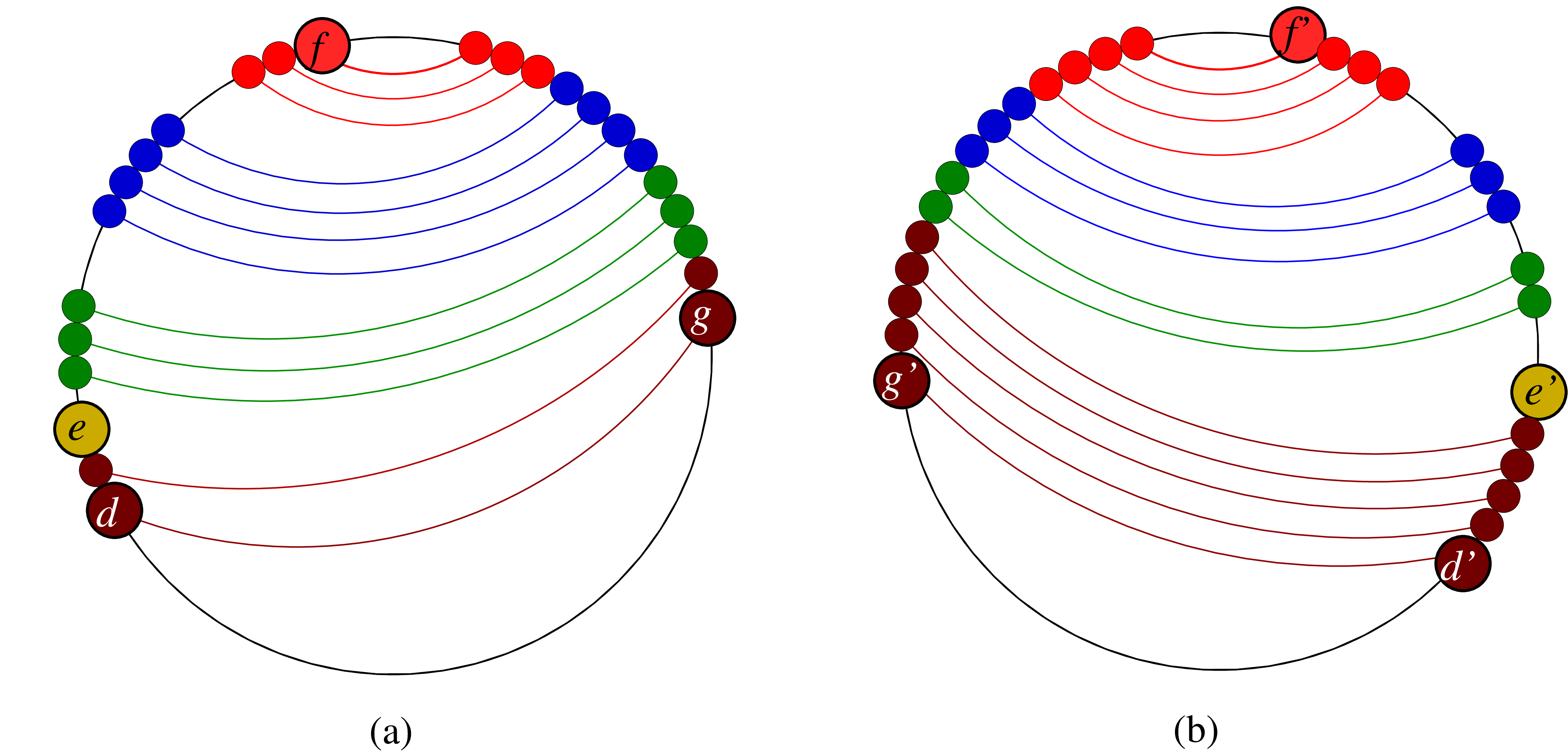}%
\end{picture}%
%
%

\begin{picture}(33063,15933)(846,-17573)
\put(11976,-2479){\makebox(0,0)[lb]{\smash{{\SetFigFont{41}{49.2}{\rmdefault}{\mddefault}{\itdefault}{\color[rgb]{0,0,0}$A_1$}%
}}}}
\put(13986,-3589){\makebox(0,0)[lb]{\smash{{\SetFigFont{41}{49.2}{\rmdefault}{\mddefault}{\itdefault}{\color[rgb]{0,0,0}$A_2$}%
}}}}
\put(15396,-5509){\makebox(0,0)[lb]{\smash{{\SetFigFont{41}{49.2}{\rmdefault}{\mddefault}{\itdefault}{\color[rgb]{0,0,0}$A_3$}%
}}}}
\put(16221,-7519){\makebox(0,0)[lb]{\smash{{\SetFigFont{41}{49.2}{\rmdefault}{\mddefault}{\itdefault}{\color[rgb]{0,0,0}$A_4$}%
}}}}
\put(4986,-2404){\makebox(0,0)[lb]{\smash{{\SetFigFont{41}{49.2}{\rmdefault}{\mddefault}{\itdefault}{\color[rgb]{0,0,0}$B_1$}%
}}}}
\put(1086,-8974){\makebox(0,0)[lb]{\smash{{\SetFigFont{41}{49.2}{\rmdefault}{\mddefault}{\itdefault}{\color[rgb]{0,0,0}$B_3$}%
}}}}
\put(1686,-5584){\makebox(0,0)[lb]{\smash{{\SetFigFont{41}{49.2}{\rmdefault}{\mddefault}{\itdefault}{\color[rgb]{0,0,0}$B_2$}%
}}}}
\put(861,-12004){\makebox(0,0)[lb]{\smash{{\SetFigFont{41}{49.2}{\rmdefault}{\mddefault}{\itdefault}{\color[rgb]{0,0,0}$B_4$}%
}}}}
\put(22116,-2629){\makebox(0,0)[lb]{\smash{{\SetFigFont{41}{49.2}{\rmdefault}{\mddefault}{\itdefault}{\color[rgb]{0,0,0}$B_1$}%
}}}}
\put(20106,-4159){\makebox(0,0)[lb]{\smash{{\SetFigFont{41}{49.2}{\rmdefault}{\mddefault}{\itdefault}{\color[rgb]{0,0,0}$B_2$}%
}}}}
\put(18336,-8989){\makebox(0,0)[lb]{\smash{{\SetFigFont{41}{49.2}{\rmdefault}{\mddefault}{\itdefault}{\color[rgb]{0,0,0}$B_4$}%
}}}}
\put(29297,-2374){\makebox(0,0)[lb]{\smash{{\SetFigFont{41}{49.2}{\rmdefault}{\mddefault}{\itdefault}{\color[rgb]{0,0,0}$A_1$}%
}}}}
\put(32604,-5209){\makebox(0,0)[lb]{\smash{{\SetFigFont{41}{49.2}{\rmdefault}{\mddefault}{\itdefault}{\color[rgb]{0,0,0}$A_2$}%
}}}}
\put(33549,-7571){\makebox(0,0)[lb]{\smash{{\SetFigFont{41}{49.2}{\rmdefault}{\mddefault}{\itdefault}{\color[rgb]{0,0,0}$A_3$}%
}}}}
\put(33246,-12289){\makebox(0,0)[lb]{\smash{{\SetFigFont{41}{49.2}{\rmdefault}{\mddefault}{\itdefault}{\color[rgb]{0,0,0}$A_4$}%
}}}}
\put(18966,-5779){\makebox(0,0)[lb]{\smash{{\SetFigFont{41}{49.2}{\rmdefault}{\mddefault}{\itdefault}{\color[rgb]{0,0,0}$B_3$}%
}}}}
\end{picture}}

 \caption{(a) How to apply the Jaw Move to a $\SM$-minor; the picture corresponds to the minor in Figure \ref{multigap}(a). (b) How to apply the Jaw Move to a $\overline{\SM}$-minor; the picture corresponds to the minor in Figure \ref{multigap}(b).}\label{SMKuo}
 \end{figure}

We apply the Jaw Move in Lemma \ref{Jaw} to the $k\times(k+1)$ matrix $M_{A}^{B\cup\{b+k-k_s+t-1\}}$ with $d=b+k+t-1,e=b+k-k_s+t-1,f=b,g=a$ (see Figure \ref{SMKuo}(a)), and obtain
\begin{align}\label{genminoreq1}
\SM_{a,b}&(k_1,\dotsc,k_s;t_1,\dotsc,t_{s-1}) \SM_{a+1,b+1}(k_1-1,\dotsc,k_s;t_1,\dotsc,t_{s-1}-1)
=\notag\\&\SM_{a,b}(k_1,\dotsc,k_s;t_1,\dotsc,t_{s-1}-1)\SM_{a+1,b+1}(k_1-1,\dotsc,k_s;t_1,\dotsc,t_{s-1})\notag\\
&+\SM_{a,b+1}(k_1-1,\dotsc,k_s+1;t_1,\dotsc,t_{s-1}-1)\SM_{a+1,b}(k_1,\dotsc,k_s-1;t_1,\dotsc,t_{s-1}).
\end{align}
Here we understand that
\begin{equation}
\SM_{a,b}(k_1,\dots,k_{s-1},0;t_1,\dots,t_{s-1})\equiv \SM_{a,b}(k_1,\dots,k_{s-1};t_1,\dots,t_{s-2}),
\end{equation}
\begin{equation}
\SM_{a,b}(0,k_2\dots,k_{s};t_1,\dots,t_{s-1})\equiv \SM_{a,b+t_1} (k_2,\dots,k_{s};t_2,\dots,t_{s-1}),
\end{equation}
and
\begin{equation}
\SM_{a,b} (k_1,\dots,k_s;t_1,\dots,t_{s-2},0)\equiv \SM_{a,b}(k_1,\dots,k_{s-2},k_{s-1}+k_{s};t_1,\dots,t_{s-2}).
\end{equation}
We note that the sum of the $k$-,$s$- and $t$-parameters in any minors in recurrence (\ref{genminoreq1}), except for the first one, are strictly less than $k+s+t$. Indeed, in each of these 5 minors, the number of components in $B$ is not increasing, i.e. the $s$-parameter is at most $s$, and at least one of the $k$- and $t$-parameters is reduced by 1 unit.

Moreover, the $s$-parameter only decreases when either the $k_1$-parameter or $t_{s-1}$-parameter becomes $0$, i.e. either the first contiguous component $B_1$ of $B$ disappears or the last two components $B_{s-1}$ and $B_s$ merge into a single component.

\medskip

To prove (\ref{geneq1}), we will use Kuo condensation to show that the tiling polynomial\\ $\Pol(\mathcal{Q}_{x,h}(k_1,\dots,k_s;t_1,\dots, $ $t_{s-1}))$ satisfies the same recurrence.  There are three cases to distinguish, based on the type of the region $\mathcal{Q}:=\mathcal{Q}_{x,h}(k_1,\dots,k_s;t_1,\dots,t_{s-1})$.

To make sure our process runs smoothly, we assume by convention in the rest of this proof  that
\begin{equation}
\mathcal{Q}_{x,h}(k_1,\dots,k_{s-1},0;t_1,\dots,t_{s-1})\equiv\mathcal{Q}_{x,h}(k_1,\dots,k_{s-1};t_1,\dots,t_{s-2}),
\end{equation}
\begin{equation}
\mathcal{Q}_{x,h}(0,k_2\dots,k_{s};t_1,\dots,t_{s-1})\equiv\mathcal{Q}_{x-k_2,h-k_2-t_1}(k_2,\dots,k_{s};t_2,\dots,t_{s-1}),
\end{equation}
\begin{equation}
\mathcal{Q}_{x,h}(k_1,\dots,k_s;t_1,\dots, t_{s-2},0)\equiv\mathcal{Q}_{x,h}(k_1,\dots,k_{s-2},k_{s-1}+k_{s};t_1,\dots,t_{s-2}).
\end{equation}

\begin{figure}\centering
\includegraphics[width=13cm]{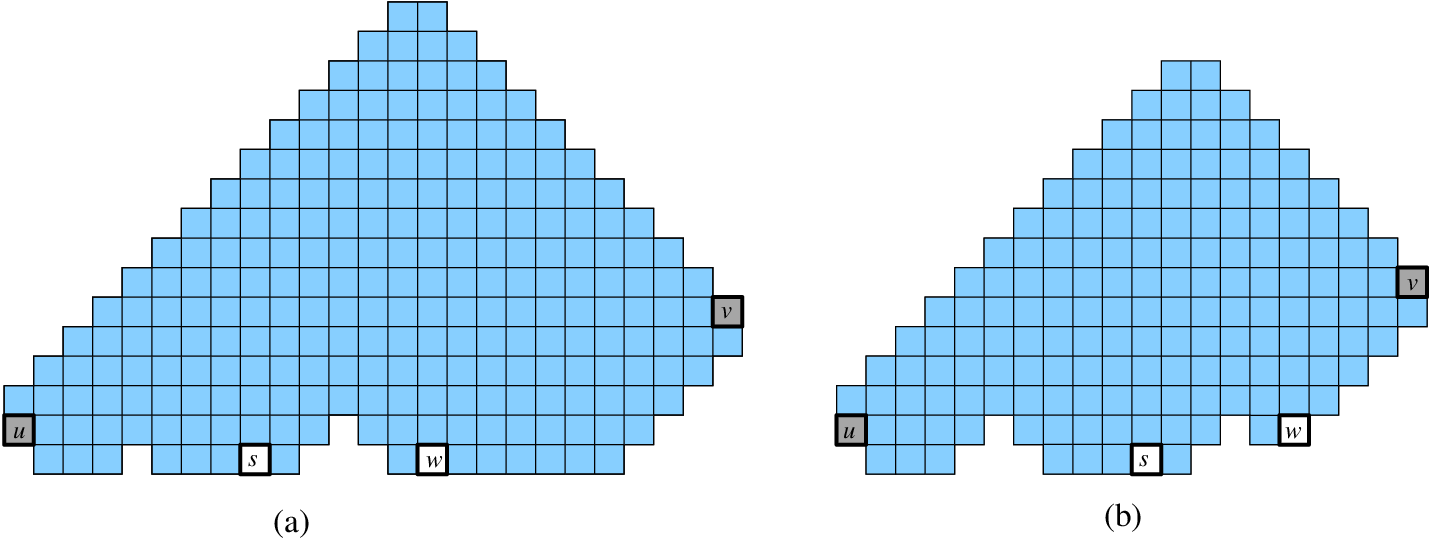}
\caption{How we apply Kuo condensation in the case when $1\leq t \leq h-k$.}
\label{Kuogeneral2}
\end{figure}

\begin{figure}\centering
\includegraphics[width=13cm]{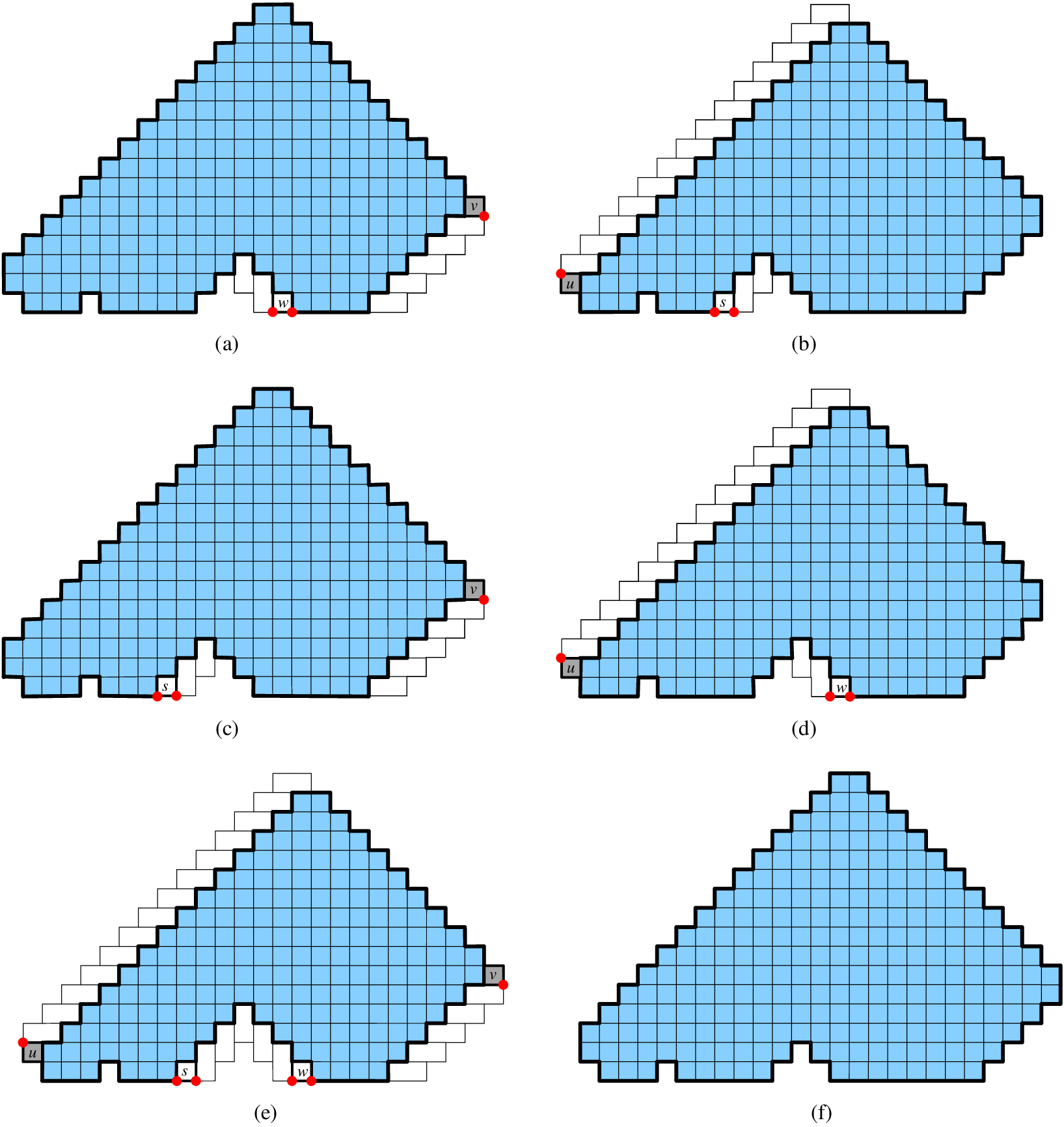}
\caption{Obtaining the recurrence for $\Pol(\mathcal{Q}_{x,h}(k_1,\dots,k_s;t_1,\dots, $ $t_{s-1}))$ in the case when $t\leq h-k$.}
\label{Kuogeneral2b}
\end{figure}

\medskip

\textit{Case 1. $t \leq h-k$.}\\

We color the unit squares on the square lattice black and white so that two adjacent unit squares have different colors. Without loss of generality, we assume that the Aztec rectangle $\AR_{x-h,0}^{h+k_1,h-k+k_1}$ (in the definition of Type 1 $\mathcal{Q}$-regions) has white unit squares along its northwest boundary.

By Remark \ref{heightrmk}, the top of the region $\mathcal{H}=\mathcal{H}_{x,h}(k_1,\dots,k_s-1;t_1,\dots,t_{s-1})$ is always below or on the line $y=n$.

We apply Kuo's Theorem \ref{Kuo3} to the dual graph of $G$ the region  $\mathcal{Q}_{x,h}(k_1,\dots,k_s-1;t_1,\dots,t_{s-1})$.  We pick the four vertices $u,v,w,s$ as in  Figure \ref{Kuogeneral2}(a) (for\footnote{Our arguments in this proof work regardless the value of $x$. Thus, to make our illustrating figures simple, we do not give any particular value for $x$ the figures.} $s=3, k_1=2, k_2=1, k_3=3, t_1=1, t_2=2, h=14$) and Figure \ref{Kuogeneral2}(b) (for $s=3,k_1=2,k_2=2, k_3=3, t_1=1, t_2=2, h=12$). The vertices $u,v$ correspond to the shaded unit squares, the vertices $w,s$ correspond to the white unit squares with bold boundary in those figures. In particular, the vertices $u$ and $v$ correspond to the leftmost and the rightmost black unit square in the region; $s$ corresponds to the  white unit square that is $2(k_2+\dotsc+k_{s-1}+t_1+\dotsc+t_{s-1})-1$ units to the right of the leftmost unit square on the base, and $w$ corresponds to the white unit square that is $2(k_2+\dotsc+k_{s}+t_1+\dotsc+t_{s-1})-1$ units to the right of the leftmost unit square on the base if such vertex exists (as in Figure \ref{Kuogeneral2}(a)), otherwise we pick the $w$-square as the lowest white unit square on the stair going southwest from the unit square corresponding to $v$ (as in Figure \ref{Kuogeneral2}(b)).

Consider the region corresponding to the graph $G-\{v,w\}$. It has several dominoes, which are forced to be in any tiling of the region. By removing these dominoes, we get back the region $\mathcal{Q}_{x,h}(k_1,\dots,k_s;t_1,\dots,$ $t_{s-1})$ (see the shaded region restricted by the bold contour in Figure \ref{Kuogeneral2b}(a) for $s=3,k_1=2, k_2=1, k_3=3, t_1=1, t_2=2, h=14$).

Similarly, by removing forced dominoes from the regions corresponding to $G-\{u,s\}$, $G-\{v,s\}$, $G-\{u,w\},$ and $G-\{u,v,w,s\}$, we get the regions $\mathcal{Q}_{x+1,h}(k_1-1,\dots,k_s;t_1,\dots,t_{s-1}-1)$, $\mathcal{Q}_{x,h}(k_1,\dots,k_s;t_1,\dots,t_{s-1}-1)$, \\ $\mathcal{Q}_{x+1,h}(k_1-1,\dots,k_s;t_1,\dots,t_{s-1})$ and $\mathcal{Q}_{x+1,h}(k_1-1,\dots,k_s+1;t_1,\dots,t_{s-1}-1)$, respectively (see Figures \ref{Kuogeneral2b}(b)--(e)).  Theorem \ref{Kuo3} and Figure \ref{Kuogeneral2b} tell us that the product of the weights of the two regions on the top is equal to the product of the weights of the two regions in the middle, plus the product of the weights of the two regions on the bottom. Equivalently, we have
\begin{align}\label{gentilingeq0}
C_1&C_2\W(\mathcal{Q}_{x,h}(k_1,\dots,k_s;t_1,\dots,t_{s-1}))\W(\mathcal{Q}_{x+1,h}(k_1-1,\dots,k_s;t_1,\dots,t_{s-1}-1))=\notag\\
&C_3C_4\W(\mathcal{Q}_{x,h}(k_1,\dots,k_s;t_1,\dots,t_{s-1}-1)) \W(\mathcal{Q}_{x+1,h}(k_1-1,\dots,k_s;t_1,\dots,t_{s-1}))\notag\\
&+ C_5C_6\W(\mathcal{Q}_{x+1,h}(k_1-1,\dots,k_s+1;t_1,\dots,t_{s-1}-1))\W(\mathcal{Q}_{x,h}(k_1,\dots,k_s-1;t_1,\dots,t_{s-1})),
\end{align}
where $C_i$ is the product of weights of forced dominoes in the region corresponding to the $i$-th graph (from left to right) in the equation (\ref{Kuoeq3}) of  Theorem \ref{Kuo3}. (Of course in this case, we have $C_6=1$.)

Comparing the covering monomial of the region corresponding to $G$ to the covering monomials of the other regions in the equation (\ref{gentilingeq0}), we obtain
\begin{equation}\label{gentilingeq1a}
\frac{\F(\mathcal{Q}_{x,h}(k_1,\dots,k_s;t_1,\dots,t_{s-1}))}{\F(\mathcal{Q}_{x,h}(k_1,\dots,k_s-1;t_1,\dots,t_{s-1}))}=\frac{C_1}{D_vD_w},
\end{equation}
\begin{equation}\label{gentilingeq1b}
\frac{\F(\mathcal{Q}_{x+1,h}(k_1-1,\dots,k_s;t_1,\dots,t_{s-1}-1))}{\F(\mathcal{Q}_{x,h}(k_1,\dots,k_s-1;t_1,\dots,t_{s-1}))}=\frac{C_2}{D_uD_s},
\end{equation}
\begin{equation}\label{gentilingeq1c}
\frac{\F(\mathcal{Q}_{x,h}(k_1,\dots,k_s;t_1,\dots,t_{s-1}-1))}{\F(\mathcal{Q}_{x,h}(k_1,\dots,k_s-1;t_1,\dots,t_{s-1}))}=\frac{C_3}{D_vD_s},
\end{equation}
\begin{equation}\label{gentilingeq1d}
\frac{\F(\mathcal{Q}_{x+1,h}(k_1-1,\dots,k_s;t_1,\dots,t_{s-1}))}{\F(\mathcal{Q}_{x,h}(k_1,\dots,k_s-1;t_1,\dots,t_{s-1}))}=\frac{C_4}{D_uD_w},
\end{equation}
\begin{equation}\label{gentilingeq1e}
\frac{\F(\mathcal{Q}_{x+1,h}(k_1-1,\dots,k_s+1;t_1,\dots,t_{s-1}-1))}{\F(\mathcal{Q}_{x,h}(k_1,\dots,k_s-1;t_1,\dots,t_{s-1}))}=\frac{C_5}{D_uD_vD_wD_s},
\end{equation}
where $D_u$ (resp., $D_v,D_w,D_s$) is the product of  those  terms $v_{x,y}$,  which  correspond to the lattice points $(x,y)$ adjacent the  $u$-square (resp., $v$-square, $w$-square, $s$-square), and are not the central of the long side of any forced domino (illustrated by the red dots in Figures \ref{Kuogeneral2b}(a)--(e)).

By (\ref{gentilingeq0})--(\ref{gentilingeq1e}), we get
\begin{align}\label{gentilingeq1}
\Pol(\mathcal{Q}_{x,h}&(k_1,\dots,k_s;t_1,\dots,t_{s-1}))\Pol(\mathcal{Q}_{x+1,h}(k_1-1,\dots,k_s;t_1,\dots,t_{s-1}-1))=\notag\\
&\Pol(\mathcal{Q}_{x,h}(k_1,\dots,k_s;t_1,\dots,t_{s-1}-1)) \Pol(\mathcal{Q}_{x+1,h}(k_1-1,\dots,k_s;t_1,\dots,t_{s-1}))\notag\\
&+ \Pol(\mathcal{Q}_{x+1,h}(k_1-1,\dots,k_s+1;t_1,\dots,t_{s-1}-1))\Pol(\mathcal{Q}_{x,h}(k_1,\dots,k_s-1;t_1,\dots,t_{s-1})).
\end{align}
 This means that $\SM_{a,b}(k_1,\dotsc,k_s;t_1,\dotsc,t_{s-1})$ and $\Pol(\mathcal{Q}_{x,h}(k_1,\dots,k_s;t_1,\dots,t_{s-1}))$ satisfy the same recurrence in this case. Finally, we note that the latter regions in recurrence (\ref{gentilingeq1}) may \emph{not} be in the same type as the first one, however, our induction procedure still works, as long as the sum of $k$-,$s$- and $t$-parameters in these regions is strictly less than $k+s+t$. This fact can be verified in the same way  as we did for the $\SM$-minors in recurrence (\ref{genminoreq1}) above.

\begin{figure}\centering
\includegraphics[width=14cm]{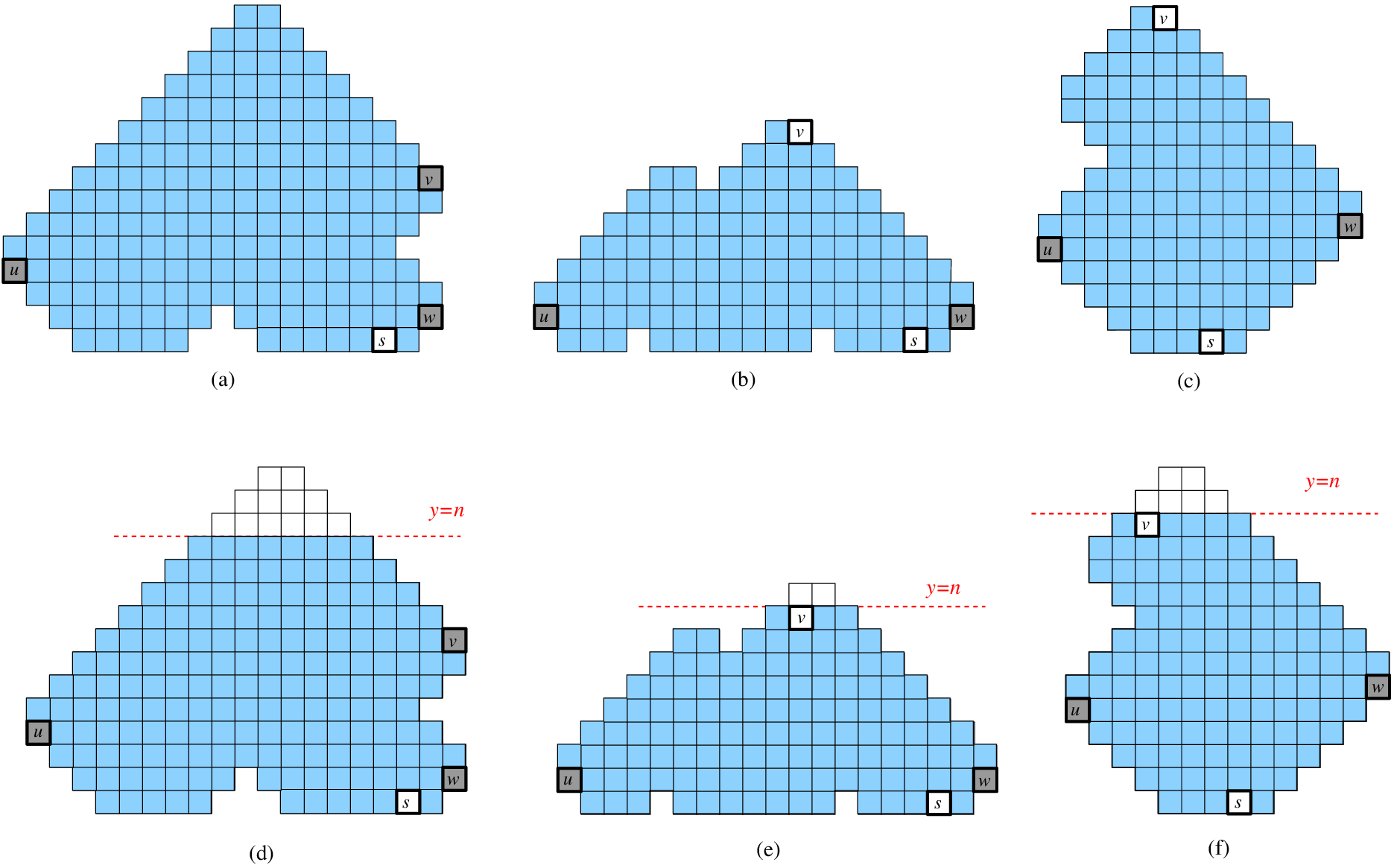}
\caption{How we apply Kuo condensation when $h\geq 0^+$ and $ t > h-k$.}
\label{Kuogeneral2n}
\end{figure}

\begin{figure}\centering
\includegraphics[width=13cm]{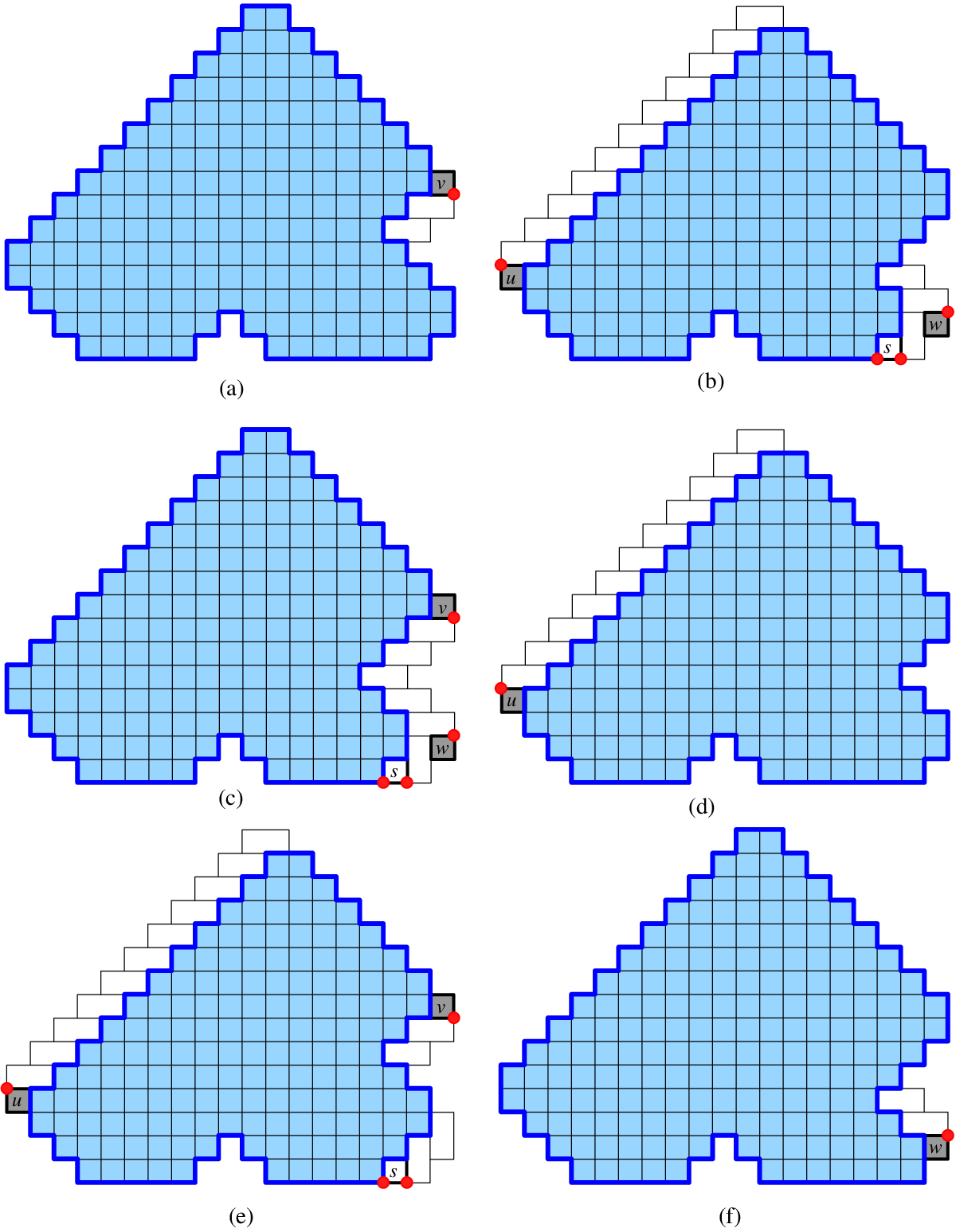}
\caption{Obtaining the recurrence for $\Pol(\mathcal{Q}_{x,h}(k_1,\dots,k_s;t_1,\dots, $ $t_{s-1}))$ in case when $1\leq t+k-h\leq k_1$.}
\label{Kuogeneral2m}
\end{figure}


\medskip

\textit{Case 2. $h\geq 0^+$ and $ t > h-k$.}\\

 We work first to the case the top of $\mathcal{H}$ is below or on the line $y=n$.

We consider first the subcase when $1\leq t+k-h\leq k_1$. This corresponds to the case  $\AD_2$ stays inside $\AD_1$. We apply Kuo's Theorem \ref{Kuo2} to the graph $G$ of the region $R$ that is the union of the two regions $\mathcal{Q}_{x,h}(k_1,\dots,k_s;t_1,\dots,t_{s-1})$ and $\mathcal{Q}_{x+1,h}(k_1-1,\dots,k_s;t_1,\dots,t_{s-1})$ as  in Figure \ref{Kuogeneral2n}(a) for $s=3,k_1=4,k_2=k_3=2, t_1=2, t_2=3, h=11$ (these two regions $\mathcal{Q}_{x,h}(k_1,\dots,k_s;t_1,\dots,t_{s-1})$ and $\mathcal{Q}_{x+1,h}(k_1-1,\dots,k_s;t_1,\dots,t_{s-1})$ are shown later in Figures \ref{Kuogeneral2m}(a) and (d), respectively).  We color the unit squares of $R$ like a chessboard, so that the unit squares on the northwest side of $\AD_1$ are white. The vertex $u$ corresponds to  the leftmost black unit square in $R$, $v$ corresponds to the last black unit square on the stair going southeast from the top, $w$ corresponds to the leftmost black unit square on the stair going northeast from the bottom, and the unit square corresponding to $s$ is the rightmost white unit square on the base.  We obtain the regions $\mathcal{Q}_{x,h}(k_1,\dots,k_s;t_1,\dots,t_{s-1})$, $\mathcal{Q}_{x+1,h}(k_1-1,\dots,k_s;t_1,\dots,t_{s-1}-1)$,
$\mathcal{Q}_{x,h}(k_1,\dots,k_s;t_1,\dots,t_{s-1}-1)$, $\mathcal{Q}_{x+1,h}(k_1-1,\dots,k_s;t_1,\dots,t_{s-1})$,
$\mathcal{Q}_{x+1,h}(k_1-1,\dots,k_s+1;t_1,\dots,t_{s-1}-1)$,  and $\mathcal{Q}_{x,h}(k_1,\dots,k_s-1;t_1,\dots,t_{s-1})$ by removing forced dominoes from the regions corresponding to the graphs $G-\{v\}$, $G-\{u,w,s\}$, $G-\{v,w,s\}$, $G-\{u\}$, $G-\{u,v,s\}$ and $G-\{w\}$, respectively (see Figures \ref{Kuogeneral2m}(a)--(f) respectively). We get again the equation  (\ref{gentilingeq0}) as in Case 1, where $C_i$ is now the product of weights of forced dominoes in the region corresponding to the $i$-th graph in the equation (\ref{Kuoeq2}) of Theorem \ref{Kuo2}. By comparing the covering the monomials of the regions in  (\ref{gentilingeq0}) to the covering monomial of $R$, we get also  (\ref{gentilingeq1}).

\begin{figure}\centering
\includegraphics[width=14cm]{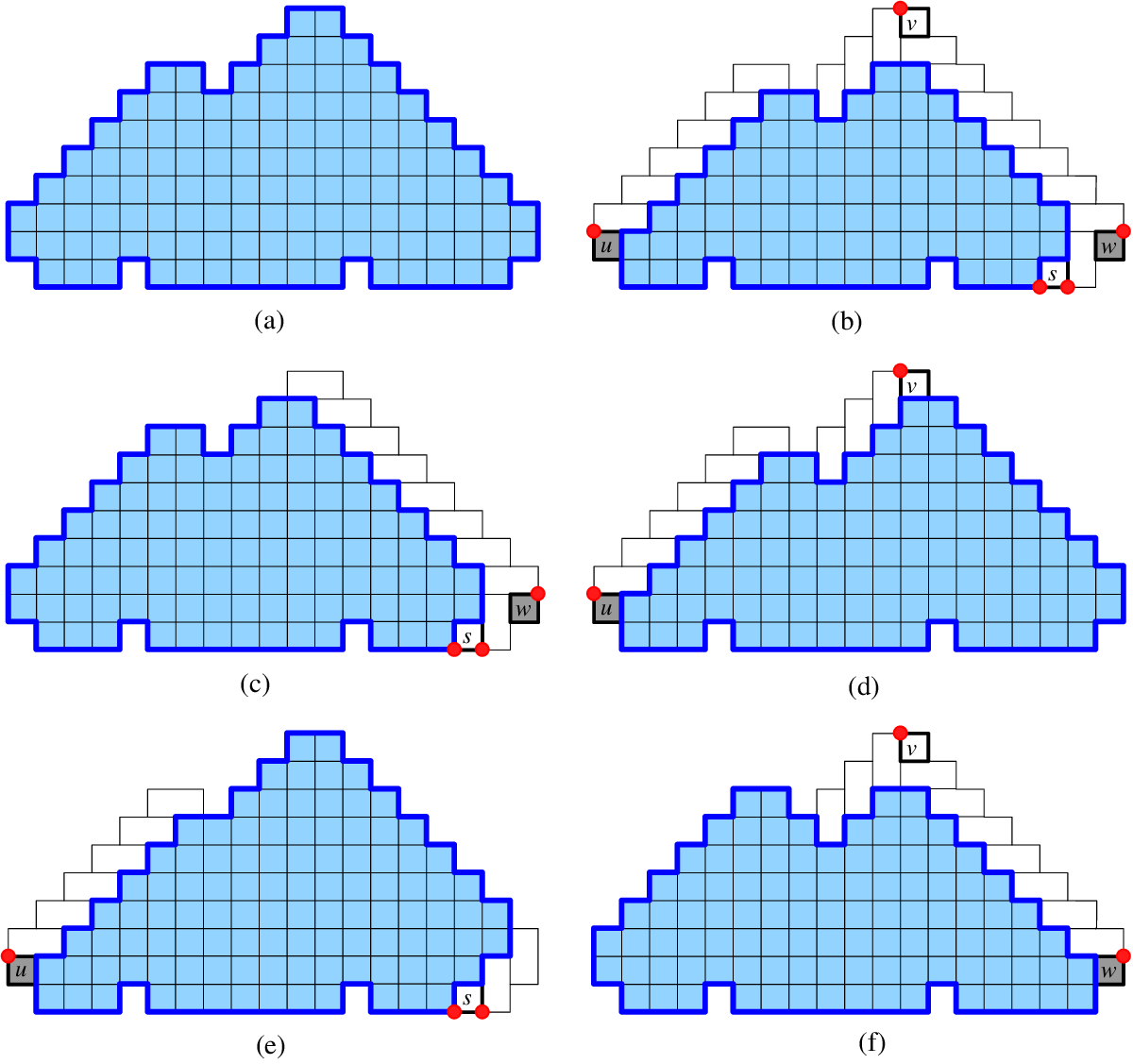}
\caption{Obtaining the recurrence for $\Pol(\mathcal{Q}_{x,h}(k_1,\dots,k_s;t_1,\dots, $ $t_{s-1}))$ in case when $t+k-h> k_1$.}
\label{Kuogeneral2d}
\end{figure}

Next, we investigate the subcase when $t+k-h>k_1$, then $\AD_2$ does not stay inside $\AD_1$ any more. In this case, Theorem \ref{Kuo1} has been used for the dual graph of $G$ of the region $\mathcal{Q}:=\mathcal{Q}_{x,h}(k_1,\dots,k_s;t_1,\dots,t_{s-1})$ with the four vertices $u,v,w,s$ corresponding to the unit squares of the same label as in Figure \ref{Kuogeneral2n}(b) for $s=4,k_1=2,k_2=k_3=1, k_4=2, t_1=1, t_2=3,t_3=2, h=6$, and Figure \ref{Kuogeneral2n}(c) for $s=2, k_1=5,k_2=6,t_1=2,h=3$. In particular, the $u$-square and $w$-square are the leftmost  and the rightmost black unit squares of $\mathcal{Q}$, the $w$-square is the white unit square on the top of $\AD_2$, and the $s$-square is still the rightmost white unit square on the base. By removing the forced dominoes, we respectively transform  the regions corresponding to the graphs $G-\{u,v,w,s\}$, $G-\{w,s\}$, $G-\{u,v\}$, $G-\{u,s\}$ and $G-\{v,w\}$ into the regions in the equation (\ref{gentilingeq0})
  (see Figures \ref{Kuogeneral2d} (b)--(f), respectively). By the same process as in the previous cases, we also obtain (\ref{gentilingeq1}).

 \medskip

 We now consider the situation when the top of $\mathcal{H}$ is above the line $y=n$. It is easy to see that when $1\leq t+k-h\leq k_1-1$, our region looks like Figure \ref{Kuogeneral2n}(d). Then the Kuo's condensation still works here the same as in Figures \ref{Kuogeneral2n}(a) and \ref{Kuogeneral2m}.  In the case when $k-h\geq k_1$ (i.e. $AD_1$ stays  inside $AD_2$), our region looks like \ref{Kuogeneral2n}(f); if $t+k-h\geq k_1>k-h$ (i.e. $AD_1$ and $AD_2$ are not inside each other), our region has the structure as in Figure \ref{Kuogeneral2n}(e). In this case, we pick  $u,w,s$ similarly as in Figures \ref{Kuogeneral2n}(b) and (c), however, the $v$-square is now the second square of the truncated top of $AD_2$ if it is cut off by the line $y=n$. Then Kuo condensation still works the same as in Figure \ref{Kuogeneral2d}.

 \medskip

\textit{Case 3. $h \leq 0^-$.}\\

\begin{figure}\centering
\includegraphics[width=13cm]{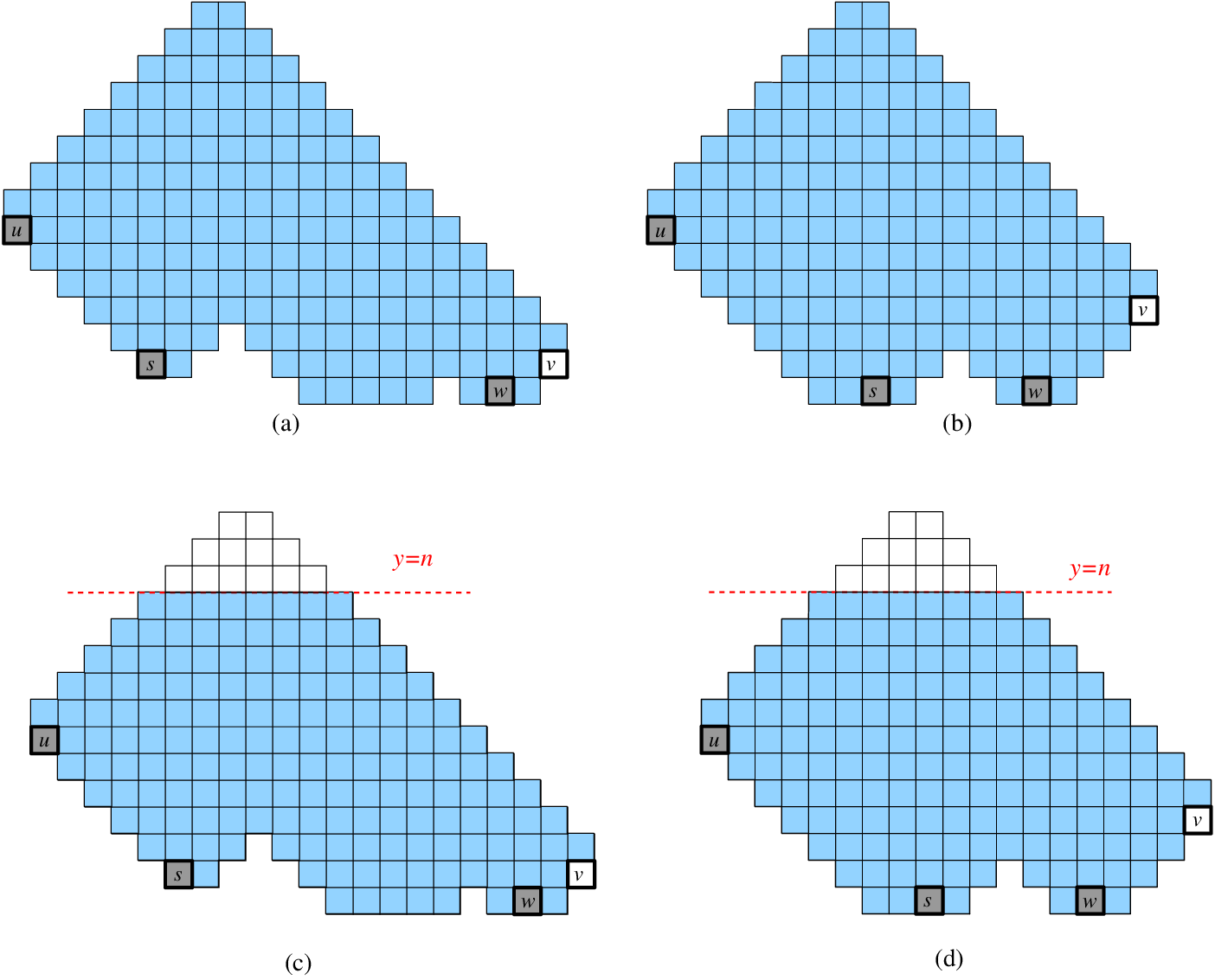}
\caption{How we apply Kuo condensation when $h\leq 0^-$.}
\label{Newnegative}
\end{figure}

\begin{figure}\centering
\includegraphics[width=13cm]{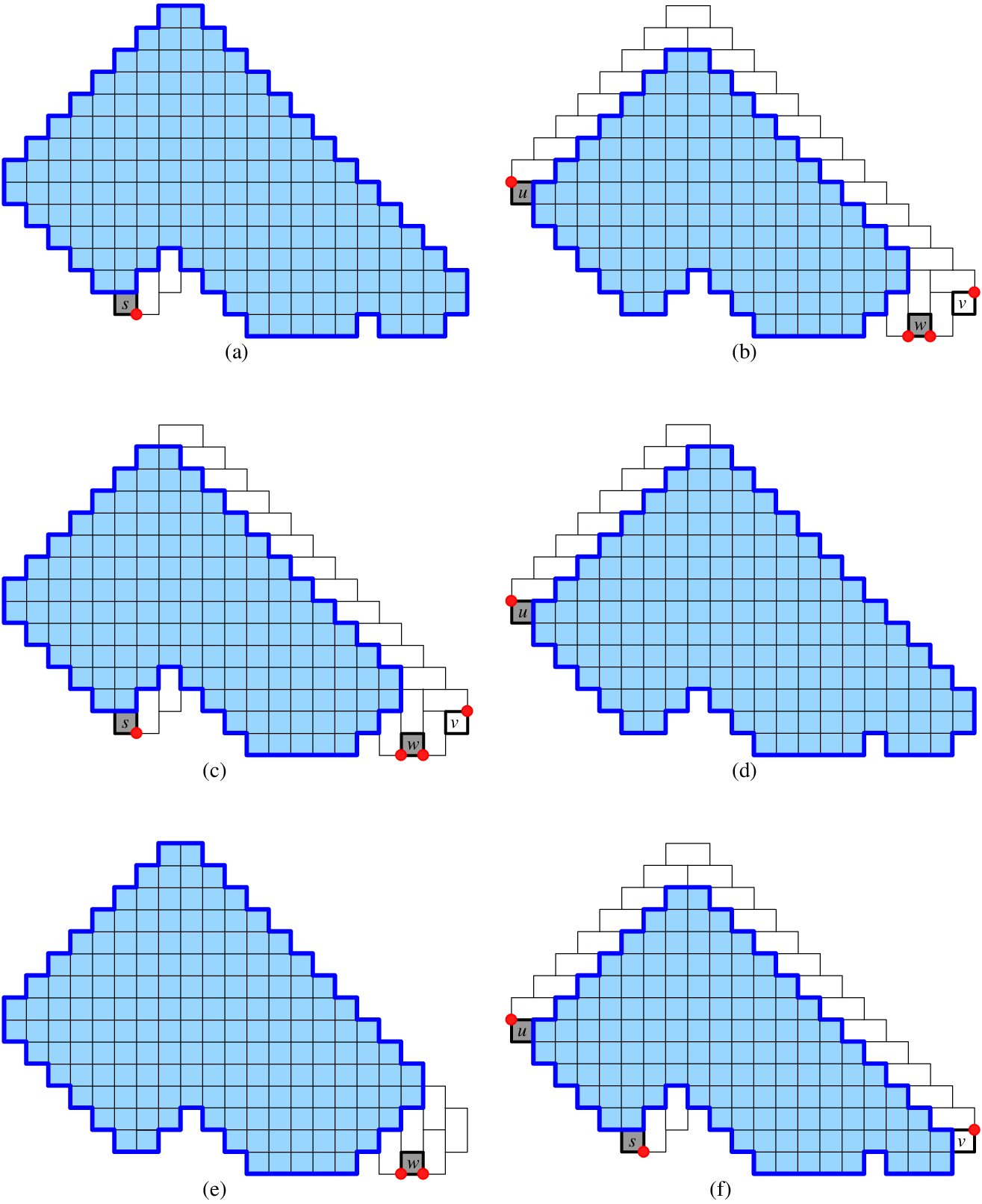}
\caption{Obtaining the recurrence when $h\leq 0^-$.}
\label{Newnegative2}
\end{figure}

Similar to the above cases, we assume that the square lattice has a chessboard coloring, so that the Aztec rectangle $\AR_{x-h+t,k}^{k+t-h-k_1,2k+t-h-k_1}$ (in the definition of the type-3 $\mathcal{Q}$ regions) has white unit squares on the northwest boundary.

We assume first that the height of $\mathcal{H}$ not greater than $n$. We apply  Kuo's Theorem \ref{Kuo2} to the dual graph $G$ of the union $R$ of the two regions $\mathcal{Q}_{x,h}(k_1,\dots,k_s; $ $t_1,\dots,t_{s-1})$ and $\mathcal{Q}_{x+1,h}(k_1-1,\dots,k_s;t_1,\dots,t_{s-1})$ as in Figure \ref{Newnegative}(a) for $s=3,k_1=4,k_2=1,k_3=2,t_1=2,t_2=1,h=-2$ (these two regions $\mathcal{Q}_{x,h}(k_1,\dots,k_s; $ $t_1,\dots,t_{s-1})$ and $\mathcal{Q}_{x+1,h}(k_1-1,\dots,k_s;t_1,\dots,t_{s-1})$ themselves are shown later in Figures \ref{Newnegative2}(a) and (d)),  and Figure \ref{Newnegative}(b) for $s=2,k_1=3,k_2=4,t_1=1,h=-3$. In this case, the  $u$-square is the leftmost black unit square in $R$, the $v$-square is the rightmost white unit square, the $w$-square is  the rightmost black unit square on the base, and the $s$-square is the black unit square that is $2(k_1+\dots+k_{s-1}+t_1+\dotsc+t_{s-1})-1 $ units to the right of the leftmost unit square on the base if such vertex exists (as in Figure \ref{Newnegative}(b)), otherwise we pick it as the last black unit square on the stair going southeast from the $u$-square (as in Figure \ref{Newnegative}(a)). By Figures \ref{Newnegative2}(a)--(f), we get the regions $\mathcal{Q}_{x,h}(k_1,\dots,k_s;t_1,\dots,t_{s-1})$, $\mathcal{Q}_{x+1,h}(k_1-1,\dots,k_s;t_1,\dots,t_{s-1}-1)$, $\mathcal{Q}_{x,h}(k_1,\dots,k_s;t_1,\dots,t_{s-1}-1)$, $\mathcal{Q}_{x+1,h}(k_1-1,\dots,k_s;t_1,\dots,t_{s-1})$, $\mathcal{Q}_{x+1,h}(k_1-1,\dots,k_s+1;t_1,\dots,t_{s-1}-1)$, and $\mathcal{Q}_{x,h}(k_1,\dots,k_s-1;t_1,\dots,t_{s-1})$ by removing forced dominoes from  the regions corresponding to the graphs $G-\{s\}$, $G-\{u,v,w\}$, $G-\{v,w,s\}$, $G-\{u\}$, $G-\{w\}$, and $G-\{u,v,s\}$, respectively. Then (\ref{gentilingeq1}) follows from  Theorem \ref{Kuo2} in the same way as in the previous cases.

Finally, if the top of $\mathcal{H}$ is above the line $y=n$, i.e. the top of $\AR_{x-h+t,k}^{k+t-h-k_1,2k+t-h-k_1}$ is above the line $y=n$. Then we apply Kuo condensation in the same way as illustrated by Figures \ref{Newnegative}(c) and (d). This finishes our proof.
\end{proof}

\begin{proof} [Proof of Theorem \ref{genthm2}]
Theorem \ref{genthm2} can be proved by following the lines in the proof of Theorem \ref{genthm}. We also prove (\ref{geneq2}) by induction on $k+s+t$ with the base case $s=1$ following from Kenyon and Wilson's Theorem 5 in \cite{KW}.

For the induction step we assume (\ref{geneq2}) for all $\overline{\mathcal{Q}}$-type regions whose sum of $k$-, $s$-, and $t$- parameter is strictly less than $k+s+t$. To finish the proof we show that the minor $\overline{\SM}_{a,b}(k_1,\dotsc,k_s;t_1,\dotsc,t_{s-1})$ and the tiling polynomial $\Pol(\overline{\mathcal{Q}}_{x,h}(k_1,\dots,k_s;t_1,\dots,t_{s-1}))$ satisfy the same recurrence.

First by applying the Jaw Move (Lemma \ref{Jaw}) to the transpose matrix of an $(n+1)\times n$ matrix $N$, we get the following variation of Lemma \ref{Jaw}:
\begin{lem}\label{Jaw2}
Let $N$ be an $(n+1)\times n$ matrix. Then
\begin{equation}
\det N^{\widehat{e'}}\det N^{\widehat{d',f'}}_{\widehat{g'}}=\det N^{\widehat{d'}} \det N^{\widehat{e',f'}}_{\widehat{g'}}+\det N^{\widehat{f'}}\det N^{\widehat{d',e'}}_{\widehat{g'}},
\end{equation}
where the indices $g',f',e',d'$ appear in \emph{clockwise} order around the circle.
\end{lem}

Similar to recurrence (\ref{genminoreq1}) for the $\SM$-minors,  we get the following  recurrence for the $\overline{\SM}$-minors, by applying Lemma \ref{Jaw2} to the matrix $N:=M_{A\cup\{a+k_s\}}^{B}$ with the four indices $d'=a, e'=a+k_s,f'=a+k+t-1, g'=b+k-1$ (see Figure \ref{SMKuo}(b)):
\begin{align}\label{genminoreq2}
\overline{\SM}_{a,b}&(k_1,\dotsc,k_s;t_1,\dotsc,t_{s-1}) \overline{\SM}_{a+1,b}(k_1-1,\dotsc,k_s;t_1,\dotsc,t_{s-1}-1)
=\notag\\&\overline{\SM}_{a+1,b}(k_1,\dotsc,k_s;t_1,\dotsc,t_{s-1}-1)\overline{\SM}_{a,b}(k_1-1,\dotsc,k_s;t_1,\dotsc,t_{s-1})\notag\\
&+\overline{\SM}_{a,b}(k_1-1,\dotsc,k_s+1;t_1,\dotsc,t_{s-1}-1)\overline{\SM}_{a+1,b}(k_1,\dotsc,k_s-1;t_1,\dotsc,t_{s-1}).
\end{align}
Recurrence (\ref{genminoreq2}) is essentially the same as recurrence (\ref{genminoreq1}) in the case of the$\SM$-minors, the only difference here is that the $b$-parameter is now unchanged, as none of the four indices $d',e',f',g'$ is equal to $b$.

Next, we would like to show the same recurrence for the tiling polynomials of $\overline{\mathcal{Q}}$-type regions by applying Kuo condensation. A key of the application of Kuo condensation is finding suitable vertices $u,v,w,s$ as in Theorems \ref{Kuo1}, \ref{Kuo2}, and \ref{Kuo3}. However, as any $\overline{\mathcal{Q}}$-type region is obtained from a $\mathcal{Q}$-type region by a vertical reflection and a horizontal translation, we can apply the same Kuo's condensation theorem (i.e. Theorem \ref{Kuo1}, \ref{Kuo2}, or \ref{Kuo3}) with the four vertices $u,v,w,s$ obtained  by reflecting that in the case of $\mathcal{Q}$-type regions over a vertical line. For example, in  the case $1\leq t \leq h-k$, one can pick the four vertices $u,v,w,s$ on the region $\overline{\mathcal{Q}}_{x,h}(k_1,\dots,k_s-1;t_1,\dots,t_{s-1})$ as in Figure \ref{kuoQ2}, for  $s=2, k_1=3,k_2=3,t_1=2, h=11$. We notice that the four unit squares corresponding to  $u,v,w,s$ are now in the \emph{clockwise order} around the boundary of the region (as opposed to being in  the counter-clockwise order as in the case of the $\mathcal{Q}$-type region in the proof of Theorem \ref{genthm}).

\begin{figure}\centering
\includegraphics[width=8cm]{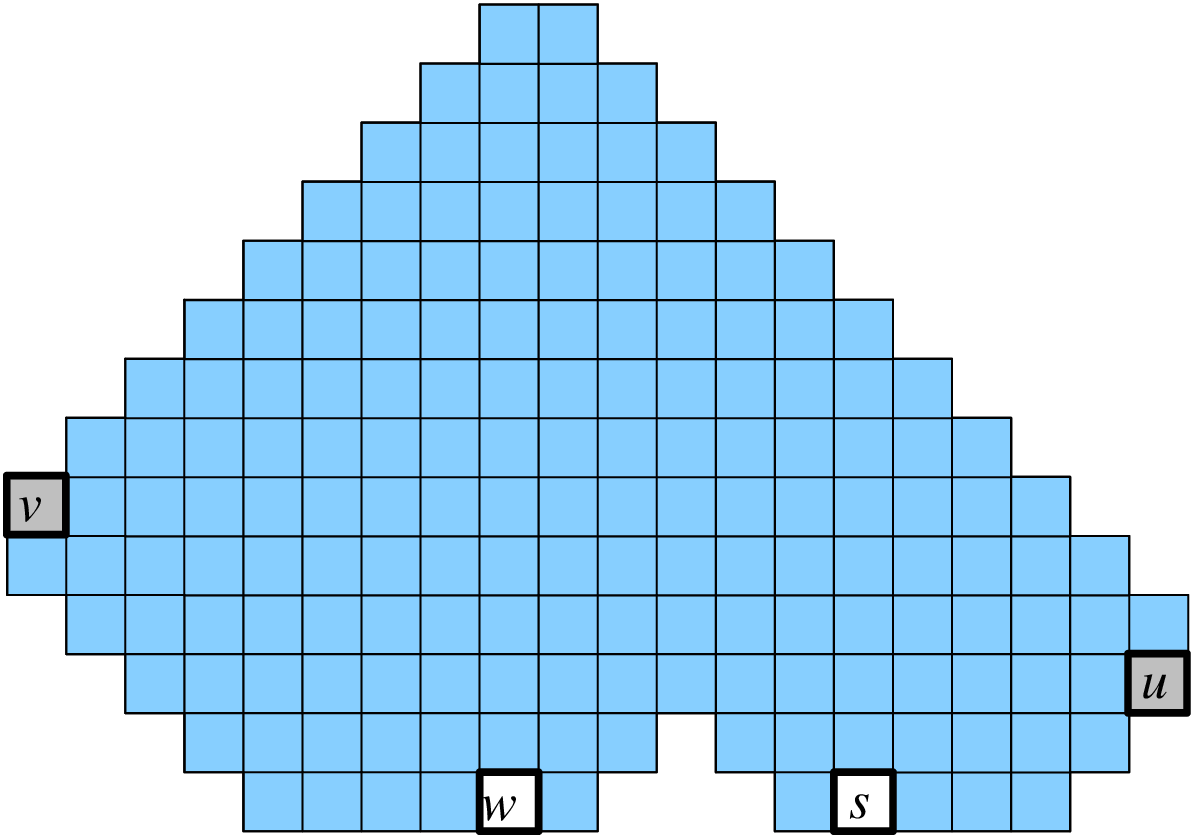}
\caption{How to apply Kuo condensation to a $\overline{\mathcal{Q}}$-type region, in the case $1\leq t \leq h-k$.}\label{kuoQ2}
\end{figure}

\begin{figure}\centering
\includegraphics[width=13cm]{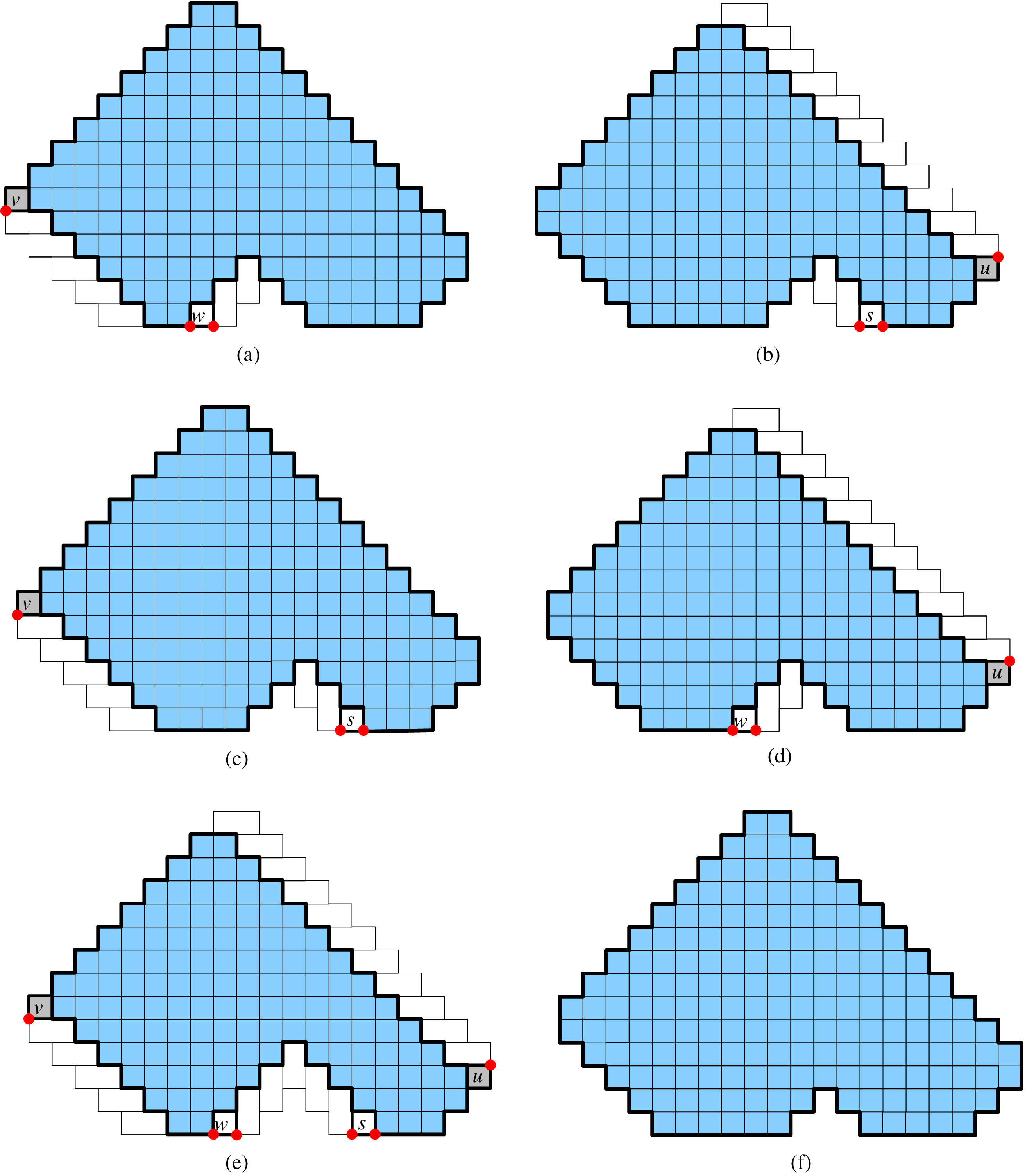}
\caption{Obtaining a recurrence for the tiling polynomials of $\overline{\mathcal{Q}}$-type regions, in the case $1\leq t \leq h-k$.}\label{kuoQ3}
\end{figure}

Similar to the proof of Theorem \ref{genthm}, by working on forced dominoes as in Figure \ref{kuoQ3},  we get a recurrence similar to recurrence (\ref{gentilingeq1}) in the proof of Theorem \ref{genthm}:
\begin{align}\label{gentilingeq2}
\Pol(\overline{\mathcal{Q}}_{x,h}&(k_1,\dots,k_s;t_1,\dots,t_{s-1}))\Pol(\overline{\mathcal{Q}}_{x-1,h}(k_1-1,\dots,k_s;t_1,\dots,t_{s-1}-1))=\notag\\
&\Pol(\overline{\mathcal{Q}}_{x,h}(k_1,\dots,k_s;t_1,\dots,t_{s-1}-1)) \Pol(\overline{\mathcal{Q}}_{x-1,h}(k_1-1,\dots,k_s;t_1,\dots,t_{s-1}))\notag\\
&+ \Pol(\overline{\mathcal{Q}}_{x-1,h}(k_1-1,\dots,k_s+1;t_1,\dots,t_{s-1}-1))\Pol(\overline{\mathcal{Q}}_{x,h}(k_1,\dots,k_s-1;t_1,\dots,t_{s-1})).
\end{align}
One readily sees that recurrence (\ref{gentilingeq2}) is essentially the same as recurrence (\ref{gentilingeq1}) in the case of the $\mathcal{Q}$-type regions. The only difference is that the $x$-parameters in the second, the fourth and the fifth terms are now reduced by $1$ (as opposed to being increased by $1$ in recurrence (\ref{gentilingeq1})).

This means that the minor $\overline{\SM}_{a,b}(k_1,\dotsc,k_s;t_1,\dotsc,t_{s-1})$ and the tiling polynomial $\Pol(\overline{\mathcal{Q}}_{x,h}(k_1,\dots,k_s;t_1,\dots,t_{s-1}))$ also satisfy the same recurrence when $1\leq t \leq h-k$.  By applying Kuo condensation similarly to the proof of Theorem \ref{genthm} (with reflected vertices $u,v,w,s$), we can get the same recurrence (\ref{gentilingeq2}) for the case $h\geq 0^+$ and $ t > h-k$ and for the case $h\leq 0^-$. This finishes our proof.
\end{proof}

\section{Open question for general circular minors}

We would like to know if the statement of Kenyon-Wilson Conjecture \ref{KWconj} holds for all circular minors whose index sets are both non-contiguous. Our data shows that there are several classes of such minors that can be written as the tiling polynomial of a region similar to our regions $\mathcal{Q}_{x,h}(k_1,\dots,k_s;t_1,\dots,t_{s-1})$ and $\overline{\mathcal{Q}}_{x,h}(k_1,\dots,k_s;t_1,\dots,t_{s-1})$.

\subsection*{Acknowledgements}
I would like to thank  Pavlo Pylyavskyy for drawing my attention to Kenyon and Wilson's conjecture, and for fruitful discussions. I also thank David Wilson, Gregg Musiker, Igor Pak, Patricia Hersh, and Thomas Lam for many helpful comments.

\end{document}